\documentclass[reqno]{amsart}

\usepackage{etoolbox}
\numberwithin{equation}{section}
\usepackage[foot]{amsaddr}
\patchcmd{\abstract}{\null\vfil}{}{}{}

\usepackage[dvipsnames]{xcolor}
\usepackage{smartdiagram}
\usepackage{tikz}
\usepackage{amsmath,bm,amsthm, amssymb}
\usepackage{fullpage}
\usepackage{color}
\usepackage{animate}
\usepackage{etoolbox}
\usepackage{comment}
\usepackage{pgf}
\usetikzlibrary{calc}
\usetikzlibrary{patterns}
\usetikzlibrary{arrows}
\usetikzlibrary{decorations.pathreplacing}
\usepackage[utf8]{inputenc}
\usepackage{pgfplots}

\usepackage[most]{tcolorbox}
\usepackage{adjustbox}
\usepackage[final]{pdfpages}
\usepackage{wrapfig}
\usepackage{ marvosym }
\usepackage{comment}
\usepackage{mathrsfs}
\usepackage{changepage}
\usepackage{booktabs}
\usepackage{tabularx}

\usepackage{subcaption}

\usepackage{enumitem}

\theoremstyle{plain}
\newtheorem{theorem}{Theorem}
\newtheorem{proposition}[theorem]{Proposition}
\newtheorem{corollary}[theorem]{Corollary}
\newtheorem{lemma}[theorem]{Lemma}

\theoremstyle{definition}
\newtheorem{definition}[theorem]{Definition}

\newtheorem{example}[theorem]{Example}

\theoremstyle{remark}
\newtheorem{remark}[theorem]{Remark}

\usepackage{xcolor}
\usepackage[most]{tcolorbox}

\newtcolorbox{manishbox}{
  colback=red!5!white,       
  colframe=red!75!black,     
  title=\textbf{Manish:},    
  fonttitle=\bfseries,       
  left=2mm, right=2mm, top=1mm, bottom=1mm,
  boxrule=0.8pt,
  sharp corners,
  enhanced,
}

\definecolor{wiasblue}   {cmyk}{1.0, 0.60, 0, 0}
\definecolor{mlugreen}{RGB}{172,6,52}
\definecolor{darmstadt}{RGB}{135,206,250}

\def\Q{\mathbb Q}
\def\R{\mathbb R}
\def\T{\mathbb T}

\def\mc{\mathcal}
\def\ms{\mathsf}

\def\su{\subseteq}

\def\g{\gamma}

\def\de{\delta}

\def\one{\mathbf{1}}

\def\G{\Gamma}

\def\YY{\mc Y}

\def\PP{\mc P}

\def\CC{\mc C}

\def\bep{\begin{proof}}
\def\enp{\end{proof}}
\def\bepr{\begin{proposition}}
\def\enpr{\end{proposition}}
\def\bec{\begin{corollary}}
\def\enc{\end{corollary}}
\def\bea{\begin{align}}
\newcommand\eea{\end{align}}
\def\beas{\begin{align*}}
\def\eeas{\end{align*}}
\def\bet{\begin{theorem}}
\def\ent{\end{theorem}}
\def\bee{\begin{example}}
\def\ene{\end{example}}

\def\bede{\begin{definition}}
\def\ende{\end{definition}}
\def\ber{\begin{remark}}
\def\enr{\end{remark}}
\def\beca{\begin{cases}}
\def\enca{\end{cases}}
\def\bel{\begin{lemma}}
\def\enl{\end{lemma}}
\def\been{\begin{enumerate}}
\def\enen{\end{enumerate}}

\def\beit{\begin{itemize}}
\def\enit{\end{itemize}}
\def\befr{\begin{frame}}
\def\enfr{\end{frame}}

\def\diam{\ms{diam}}

\renewcommand\le{\leqslant}
\renewcommand\ge{\geqslant}

\def\dist{\ms{dist}}

\def\becbb{\begin{center}\begin{tcolorbox}[{colback=Dandelion!20}]}
\def\encbb{\end{tcolorbox}\end{center}}
\def\beccb{\begin{center}\begin{tcolorbox}[{colback=Dandelion!20}]}
\def\enccb{\end{tcolorbox}\end{center}}
\def\becb{\begin{center}\begin{tcbox}[{colback=Dandelion!20}]}
\def\encb{\end{tcbox}\end{center}}

\def\bef{\begin{figure}[!h]}
\def\enf{\end{figure}}
\def\bels{\begin{lstlisting}}
\def\enls{\end{lstlisting}}

\def\betp{\begin{tikzpicture}}
\def\entp{\end{tikzpicture}}

\def\endo{\end{document}}

\usepackage{placeins}

\usetikzlibrary{arrows.meta,calc,angles,quotes,decorations.pathreplacing}
\pgfplotsset{compat=1.18}

\begin{document}

\title{Large deviations for sparse systems of moving particles}

\author{Cairui Duan}

\author{Paula De Dios Andres}

\author{Manish Pandey}

\author{Miguel A. Ramos Docampo}

\author{Brigitte St\"adler}
\address[Cairui Duan, Manish Pandey, Christian Hirsch]{Department of Mathematics, Aarhus University,
Ny Munkegade 118, DK-8000 Aarhus C, Denmark}
\address[Paula de Dios Andres, Miguel A. Ramos Docampo, Brigitte St\"adler]{Interdisciplinary Nanoscience Center, Aarhus University,
Gustav Wieds Vej 14, DK-8000 Aarhus C, Denmark}

\author{Christian Hirsch}

\begin{abstract}
We study large deviations for rare clusters in sparse systems of moving
particles. In the regime \(nr_n^d\to0\) and
\(\rho_{k,n}=n^kr_n^{d(k-1)}\to\infty\), we prove a large deviation principle
for the empirical measure of isolated \(k\)-particle trajectory clusters. The
speed is \(\rho_{k,n}\), and the rate function is the relative entropy
\(h(\,\cdot\,\mid\tau_k)\), where the finite reference measure \(\tau_k\)
explicitly incorporates the underlying path law. As consequences, we derive
free-energy variational formulas for bounded interactions and a hard-core
constraint and identify the corresponding minimizing cluster law. The
normalized optimizer provides the basis for Metropolis--Hastings sampling of
interacting trajectory clusters. As an application motivated by chain
formation and swarming in active particle systems, we calibrate the resulting
stochastic model to experimental trajectories of magnetic micromotors and find
that the fitted velocity scale varies systematically with particle size and
magnetic forcing.
\end{abstract}
\maketitle
\thispagestyle{empty}

\frenchspacing

\section{Introduction}
\label{sec:intro}

Large deviation analysis of continuum many-body systems is a fundamental topic in statistical physics. For interacting particles placed in space, large deviation theory quantifies the entropy-energy balance that determines the most likely realization of an atypical event; see, for example, \cite{sidorova,jansen2,jkm15,gtf}. While these works cover a broad range of regimes, they concern static particle systems. In many physical settings, however, particles move according to an underlying stochastic or deterministic dynamics. The aim of this paper is to study the large deviation behavior of rare clusters of such moving particles in a dilute regime. Our state variable is the empirical measure of rare spatial configurations of several trajectories.

The key challenge is that rare collective structures in dilute systems of
moving particles are governed by entire trajectories rather than by a single
spatial configuration. Retaining the full trajectories changes both the
limiting cluster law and the localization argument needed to obtain it. Our
main probabilistic result identifies the resulting path-valued large
deviation principle. Its variational consequences then provide the link to
the computational and experimental parts of the paper.

We note that the study of rare clusters of trajectories is also of interest in the context of continuum Gibbs point processes. More precisely, R\oe lly and Zass~\cite{roelly-zass} established existence results for marked Gibbs point processes with unbounded marks and nonuniform
interaction ranges. Zass~\cite{zass} subsequently developed existence,
cluster expansion, and uniqueness results for Gibbs point processes whose
marks are diffusion trajectories. More recently, Jahnel, K\"oppl, Steenbeck,
and Zass~\cite{jahnel-koppl-steenbeck-zass} proved an infinite-volume Gibbs
variational principle for a marked continuum model with unbounded interaction
range. These works study the construction and equilibrium characterization
of infinite-volume Gibbs states. By contrast, we work in a sparse asymptotic
regime and identify the exponential cost, favored shape distribution, and
computational representation of rare finite clusters of trajectories.

The application is motivated by active matter and the design of micro- and
nanoscale particle systems. Micro- and nanomotors convert chemical or
externally supplied energy into directed or persistent motion that can
dominate Brownian diffusion over relevant length and time scales
\cite{bishop:biswal:bharti:2023,ju:etal:2025}. Beyond the motion of
individual particles, interactions between motors and coupling to external
fields can generate collective states such as chains, clusters, dynamically
reconfigurable assemblies, and coordinated swarms
\cite{ramosdocampo:nieto:dediosandres:qian:stadler:2023,
urso:ussia:peng:oral:pumera:2023,yan:etal:2024,
heuthe:panizon:gu:bechinger:2024,xu:ge:xu:2025}. 
More specifically, motors endowed with magnetic features can respond to
externally applied magnetic fields
\cite{ramosdocampo:2024,shen:etal:2023,liu:etal:2023}, giving rise to a variety of spatial
configurations while moving.
Experiments show that these states depend
sensitively on particle geometry and magnetic anisotropy, particle
concentration, interparticle interactions, and the strength and temporal
protocol of the applied fields \cite{yan:etal:2024,cheng:etal:2025,
haque:maestas:zhu:hanson:wu:wu:2025,
camacho:devicente:2025}.

Consequently, micromotor experiments can produce large and heterogeneous
trajectory data sets in which population-averaged quantities may obscure
rare clusters, distinct subpopulations, and correlations between particle
motion and collective structure. Systematically exploring the relevant
particle and forcing parameters experimentally is therefore demanding.
Stochastic models that retain complete trajectories can help identify
parameter regimes for subsequent laboratory investigation. This idea is
consistent with statistical-physics approaches to materials design
\cite{miskin:etal:2016}, including inverse-design methods based on large
deviation theory for nonequilibrium colloidal assembly
\cite{das:limmer:2021}. The framework developed here provides a complementary
route in which a large deviation principle for rare trajectory clusters
yields an explicit stochastic cluster law that can be sampled
computationally and calibrated against experimental micromotor trajectories.
In the present application, this enables comparison of simulated and
measured velocity distributions across particle sizes and magnetic forcing
conditions, providing a first step toward simulation-assisted exploration
of experimentally relevant parameter regimes.

We consider a Poisson number of particles on the \(d\)-dimensional unit torus.
Their initial positions are independent and uniform, and each particle carries
an independent displacement path \(\Gamma_i\) with law \(\mathbb Q\). Its
trajectory is \(\Gamma_i'(t)=X_i+r_n\Gamma_i(t)\), where \(r_n\) is the
interaction range. We work in the sparse regime \(nr_n^d\to0\), so a typical
particle has no nearby neighbors. Nevertheless, for fixed \(k\), the expected
number of local clusters of \(k\) particles may diverge at the scale
\(\rho_{k,n}=n^kr_n^{d(k-1)}\). This is the scale at which rare local
structures involving \(k\) particles become macroscopically visible.

A collection of \(k\) trajectories is called connected when its full diameter
is of order \(r_n\) at one common time. Throughout the paper, this term refers
to the stated diameter condition and not to connectivity in the geometric
graph obtained from pairwise proximity. The collection is isolated when it
remains separated from every other trajectory at the same scale. After
rescaling and centering, each cluster determines an element of a Polish space
\(E_k\) of unordered trajectory configurations. The resulting random finite
measure \(\xi_{k,n}\) satisfies an LDP with speed \(\rho_{k,n}\) and rate
function \(h(\,\cdot\,\mid\tau_k)\).

The finite reference measure \(\tau_k\) is the central object in the theory. It
is obtained from \(k\) independent paths with law \(\mathbb Q\), integrated
over their relative initial positions and restricted by the connection
condition. Consequently, \(\tau_k\) records both the geometry of a cluster and
the underlying motion law. This is the principal mathematical effect of retaining
the full trajectory configuration.

For a bounded measurable energy \(W:E_k\to\mathbb R\), exponential tilting
gives
\[
\lim_{n\to\infty}\frac1{\rho_{k,n}}
\log\mathbb E\exp\big\{-\beta\rho_{k,n}H_W(\PP_n)\big\}
=
-\inf_{\mu\in M_f(E_k)}
\Big\{\beta\int W\,d\mu+h(\mu\mid\tau_k)\Big\}.
\]
The minimizer has finite intensity measure
\(\mu_\beta^{\mathrm{gc}}=e^{-\beta W}\tau_k\), and its normalized
cluster-shape law is proportional to \(e^{-\beta W}\tau_k\). Thus, the LDP
does more than assign an exponential cost to rare clusters. It identifies the
cluster shapes favored by interaction and provides a direct target for
Metropolis--Hastings sampling. This finite-cluster variational principle is
closely related in spirit to the entropy-energy balance in Gibbs point process
theory \cite{zass,jahnel-koppl-steenbeck-zass}, but its object and asymptotic
regime are different. We also treat a hard-core constraint, for which the
lower bound requires direct control of the event that no forbidden cluster
occurs.

The computational and experimental parts complete this connection. We first
solve the finite-measure variational problem and separate its total cluster
intensity from the normalized law of one cluster shape. The latter yields an
implementable MCMC sampler for interacting OU-Brownian trajectories. Synthetic
examples show how anisotropic interactions and external fields favor chains,
aggregation, and coordinated motion. We then calibrate the stochastic cluster
model to experimental trajectories of magnetically driven micromotors and
compare simulated and observed velocity distributions across particle sizes
and magnetic forcing conditions. Beyond assessing whether the stochastic model
can reproduce the principal experimental velocity statistics, this comparison
provides a pilot study for simulation-assisted parameter exploration. In
particular, the fitted velocity parameter \(v^\star\) varies systematically with
particle size. Thus, the data analysis is a first test of whether
the cluster law selected by the large deviation variational problem can serve
as a computational tool for exploring experimentally relevant parameter
regimes.

To summarize, the main contributions of this paper are as follows.
\begin{enumerate}
\item We establish a large deviation principle for the empirical measure of
isolated \(k\)-particle trajectory clusters in the sparse regime, with an
explicit entropy rate function determined by the path-valued reference
measure \(\tau_k\).

\item We identify the explicit minimizing cluster law and use it as the target
of a Metropolis--Hastings sampler for interacting OU-Brownian trajectory
clusters.

\item We calibrate the resulting stochastic cluster model to experimental
micromotor trajectories and use the fitted model to investigate how effective
parameters vary with particle size and magnetic forcing.
\end{enumerate}

The main technical difficulty is created by particle motion. In the static
model of \cite{gtf}, a spatial partition almost localizes the clusters
immediately. Here trajectories that start in different cubes may later
approach one another, affecting both connectivity and isolation. Under the
assumptions on the path tails, trajectories with large range may occur, but
we show that the collection of blocks they influence is superexponentially
negligible at speed \(\rho_{k,n}\). Capping the number of blocks influenced
by one trajectory at the total number of blocks permits a single Poisson
exponential estimate for the tail exponents covered by the theorem.
After truncating the paths, the remaining discrepancies are confined to
thin boundary layers. This yields
exponential equivalence between \(\xi_{k,n}\), a localized empirical measure
\(\eta_{k,n}\), and the normalized Poisson reference measure \(\zeta_{k,n}\)
with mean measure \(\tau_k\). In particular, no additional scale condition is needed
for the ordinary LDP when the exponent in the tail bound for the path range
is at least the spatial dimension.

The hard-core lower bound requires exact absence of forbidden clusters,
which the ordinary LDP lower bound and exponential equivalence at a fixed
positive distance do not control. We first exclude trajectories beyond the
same range cutoff, at cost \(\exp\{-o(\rho_{k,n})\}\), and then tilt the
independent short blocks by the hard-core Gibbs weight. The localization
errors have probabilities tending to zero under this tilt and a dependency
graph of uniformly bounded degree. The quantitative Lov\'asz local lemma
\cite[Theorem~1.1]{moser-tardos2009} then shows that all errors can be
excluded simultaneously at subexponential
cost. On this event, the global and blockwise cluster measures agree
exactly, giving the lower bound for the original forbidden set.

Section~\ref{sec:model} defines the model and states the main results.
Section~\ref{sec:proof-outline} introduces the central propositions and proves
the main LDP conditional on them. Sections~\ref{sec:prelim},
\ref{sec:1proof}, and \ref{sec:2proof} establish the estimates for path ranges,
Poisson approximation, and spatial localization, respectively.
Section~\ref{sec:interact} proves the results for bounded and hard-core
interactions. Section~\ref{sec:ex} derives the explicit optimizer and develops
the MCMC method, and Section~\ref{sec:real-data} calibrates the resulting model
to experimental micromotor trajectories and investigates its potential for
simulation-assisted exploration of experimentally relevant parameter regimes.

%
%
\section{Model definition and main results}
\label{sec:mod}
\label{sec:model}

%
%
%
\label{ss:fine}

Fix an integer spatial dimension \(d\ge1\), an integer cluster size
\(k\ge2\), and an
interaction parameter \(L>0\). We consider moving particles on the unit torus
\(\Lambda=[0,1]^d\), with addition modulo \(\mathbb Z^d\).  The number of
particles, denoted by \(N\), has the Poisson distribution with mean \(n\).
Conditional on \(N\), their initial positions \(X_1,\ldots,X_N\) are
independent and uniformly distributed on \(\Lambda\).  The interaction range
is \(r_n>0\), and we work in the sparse regime \(nr_n^d\to0\).  Thus, under
the reference measure, the expected number of particles within distance of
order \(r_n\) from a given particle tends to zero.
We first specify the particle dynamics.  Let
\[
\CC:=C([0,1],\R^d),\qquad
\CC_0:=\{\gamma\in\CC:\gamma(0)=0\},\qquad
\CC_{\T}:=C([0,1],\T^d),
\]
where all three path spaces carry the supremum metric and their Borel
\(\sigma\)-fields.  A path \(\gamma\in\CC_0\) describes displacement relative
to the initial position, and \(\Q\) is a probability measure on \(\CC_0\).
Let \(\Gamma_1,\Gamma_2,\ldots\) be independent random elements of \(\CC_0\)
with law \(\Q\), independent of the initial positions.  Particle \(i\) follows
the path
\[
\Gamma_i'(t):=X_i+r_n\Gamma_i(t),\qquad t\in[0,1],
\]
on the torus.  We write \(\PP_n=\{\Gamma_i':i\le N\}\) for the resulting point
process on \(\CC_{\T}\).  Equivalently, \(\PP_n\) is the image of the marked
Poisson process \(\{(X_i,\Gamma_i)\}_{i\le N}\) on
\(\Lambda\times\CC_0\), with intensity measure
\(n\,\ms{Leb}\otimes\Q\), under the measurable map
\[
\Phi_n:\Lambda\times\CC_0\to\CC_{\T},\qquad
\Phi_n(x,\gamma)(t)=x+r_n\gamma(t).
\]
Write \(\lambda_\Lambda\) for Lebesgue measure on \(\Lambda\).
The mapping theorem shows that \(\PP_n\) is a Poisson point process on
\(\CC_{\T}\) with intensity
\(\Theta_n:=n(\lambda_\Lambda\otimes\Q)\circ\Phi_n^{-1}\).  For
\(A\subseteq\Lambda\), write
\(\CC_{\T,A}:=\{\gamma'\in\CC_{\T}:\gamma'(0)\in A\}\).  Distances between
paths on the torus are computed with the flat torus metric.  
The numerical model in Section~\ref{sec:ex} uses Ornstein-–Uhlenbeck velocity dynamics with an additional Brownian positional component.

We study isolated connected clusters of \(k\) particles at range \(r_n\), as
in \cite{gtf}.  The first particle may lie anywhere on the torus, whereas the
remaining \(k-1\) particles must start within distance of order \(r_n\).  The
natural scale for the number of such clusters is therefore
\[
\rho_{k,n}:=n^kr_n^{d(k-1)}.
\]
We assume that \(\rho_{k,n}\to\infty\).  To avoid inessential rounding in the
block construction, we also assume that \(\rho_{k,n}^{1/d}\) is an integer for
all sufficiently large \(n\).  The torus is then partitioned into
\(\rho_{k,n}\) congruent cubes of side length \(\rho_{k,n}^{-1/d}\).  The
argument extends to arbitrary sequences by the usual rounding procedure.
We next define the space of cluster shapes.  Let
\[
\widetilde E_k
:=
\Big\{(\gamma_1,\ldots,\gamma_k)\in\CC^k:
\sum_{i=1}^k\gamma_i(0)=0\Big\},
\]
and let \(E_k:=\widetilde E_k/\mathfrak S_k\), where \(\mathfrak S_k\) acts by
permuting the coordinates.  For \([\gamma],[\eta]\in E_k\), set
\[
d_{E_k}([\gamma],[\eta])
:=
\min_{\sigma\in\mathfrak S_k}
\max_{1\le i\le k}\|\gamma_i-\eta_{\sigma(i)}\|_\infty.
\]
The space \(\widetilde E_k\) is closed in the Polish space \(\CC^k\), and the
finite group acts by isometries.  Hence, \(E_k\) is Polish, and we equip it
with its Borel \(\sigma\)-field.    Write \(M_f(E_k)\) for the space
of finite Borel measures on \(E_k\), equipped with the evaluation
\(\sigma\)-field, and let \(\mathcal M_p(E_k)\subseteq M_f(E_k)\) be the
measurable subspace of finite point measures.

Now, let
\(\Delta_{\T}(x,y)\in[-1/2,1/2)^d\) be the coordinatewise principal
representative of \(y-x\) on the torus, and fix a Borel order on
\(\CC_{\T}\).  Given
\(\YY=\{\gamma'_1,\ldots,\gamma'_k\}\subseteq\PP_n\), enumerate its elements
in this order and put \(x_i=\gamma'_i(0)\).  Let
\(D_n\gamma'_i\in\CC_0\) be the rescaled displacement obtained from the unique
lift of \(\gamma'_i\) that starts at the representative
\(x_i\in[0,1)^d\).  Define
\[
\widehat\gamma_i
:=r_n^{-1}\Delta_{\T}(x_1,x_i)+D_n\gamma'_i,
\qquad
\bar\gamma_i
:=\widehat\gamma_i-\frac1k\sum_{j=1}^k\widehat\gamma_j(0).
\]
The cluster shape is
\(\operatorname{sh}_n(\YY):=[\bar\gamma_1,\ldots,\bar\gamma_k]\in E_k\).
This is a measurable function of the unordered cluster, and we retain the
notation \(r_n^{-1}\bar\YY:=\operatorname{sh}_n(\YY)\). 
For a finite trajectory set \(\mathcal Y\), write
\(\mathcal Y(0):=\{\gamma'(0):\gamma'\in\mathcal Y\}\).

Our main object is the empirical measure of isolated connected
\(k\)-particle clusters at the interaction scale:
\begin{align}
\label{eq:xi}
\xi_{k,n}
:=\frac1{\rho_{k,n}}
\sum_{\substack{\YY\su\PP_n\\|\YY|=k}}
 s_{\ms{conn},n}(\YY)\,
 s_{\ms{iso},n}(\YY,\PP_n)\,
 \de_{r_n^{-1}\bar\YY},
\end{align}
where
\begin{align*}
s_{\ms{conn},n}(\{\g_1',\dots,\g_k'\})
&:=\one\{\diam(\g_1',\dots,\g_k')\le r_nL\}\\
&:=\one\Big\{\inf_{t\in[0,1]}\max_{i,j\le k}
|\g_i'(t)-\g_j'(t)|\le r_nL\Big\},\\
s_{\ms{iso},n}(\{\g_1',\dots,\g_k'\},\PP_n)
&:=\one\{\dist_0(\{\g_1',\dots,\g_k'\},
\PP_n\setminus\{\g_1',\dots,\g_k'\})>Lr_n\}\\
&:=\one\Big\{\inf_{t\in[0,1]}\min_{i\le k}
\min_{\G'\in\PP_n\setminus\{\g_1',\dots,\g_k'\}}
|\g_i'(t)-\G'(t)|>Lr_n\Big\}.
\end{align*}
Thus, a set of trajectories forms a cluster when its full diameter is at most
\(Lr_n\) at one common time and it remains isolated from every other
trajectory at all times.  
The same connection and isolation indicators will be used for restrictions
of the Poisson process, with the minimum over an empty set interpreted as
\(+\infty\). The measure \(\xi_{k,n}\) is a measurable random element
of \(M_f(E_k)\).

For an ordered tuple \((\gamma_1,\ldots,\gamma_k)\in\CC^k\), let
\(\operatorname{cen}(\gamma_1,\ldots,\gamma_k)\in E_k\) be its equivalence
class after subtracting \(k^{-1}\sum_i\gamma_i(0)\) from every coordinate.
For Euclidean trajectories, \(\operatorname{diam}\) denotes the same
infimum of the simultaneous diameter, with Euclidean distances in place
of torus distances. Let \(\lambda_{k-1}\) denote Lebesgue measure on
\((\mathbb R^d)^{k-1}\). The intensity measure governing a typical cluster is
\begin{align*}
\tau_k(A)
:=\frac1{k!}\,
\mathbb E\Big[\lambda_{k-1}\Big(
\Big\{y\in(\R^d)^{k-1}:\,
&\diam(\G_0,y_1+\G_1,\dots,y_{k-1}+\G_{k-1})\le L,\\
&\operatorname{cen}(\G_0,y_1+\G_1,\dots,y_{k-1}+\G_{k-1})\in A
\Big\}\Big)\Big],
\end{align*}
where \(\G_0,\G_1,\dots,\G_{k-1}\) are independent trajectories with law
\(\Q\).    Symmetry of the integrand and the product
law make the definition independent of the ordering, and the centering map
removes every common lattice translation.
The large deviation principle is stated in the \(\tau\)-topology, the
coarsest topology on \(M_f(E_k)\) for which
\(\mu\mapsto\int f\,d\mu\) is continuous for every bounded measurable
function \(f\); see \cite{dz98}.  For finite measures \(\mu\) and \(\tau_k\),
define
\[
h(\mu\mid\tau_k)
:=
\begin{cases}
\displaystyle\int_{E_k}(f\log f-f+1)\,d\tau_k,
&\mu=f\tau_k,\\[1ex]
+\infty,&\mu\not\ll\tau_k.
\end{cases}
\]
Thus, the static cluster intensity is replaced by a measure on entire
trajectory configurations, while the entropy form of the rate function is
preserved.

Write \(R(\gamma):=\sup_{t\in[0,1]}|\gamma(t)|\).  We assume that there are
constants \(C,c,\alpha>0\) such that
\begin{align}
\label{eq:curve_tail_condition}
\Q(R(\Gamma)>u)\le C\exp\{-cu^\alpha\},\qquad u\ge0.
\end{align}
We also assume
\begin{align}
\label{eq:relaxed_tail_scale_condition}
\alpha\ge d
\quad\text{or}\quad
\rho_{k,n}r_n^\alpha\log n\to0.
\end{align}
The second alternative is needed only when \(\alpha<d\).  The same assumptions
will be used for the ordinary LDP and for the bounded and hard-core interaction
results.  The tail bound also implies
\begin{align}
\label{eq:curve_moment_condition}
\mathbb E_\Q[R(\Gamma)^p]<\infty
\qquad\text{for every }0\le p<\infty.
\end{align}
We note that 
the case \(\alpha=2\) includes Brownian tails.  In dimension two, no scale
condition beyond the sparse regime is required for the ordinary LDP.
Bounded trajectories likewise require no additional scale condition.
On the connection event in the definition of \(\tau_k\), each relative
starting point has norm at most \(L+2\max_{0\le i\le k-1}R(\Gamma_i)\).
Thus, \(\tau_k(E_k)\le
C\mathbb E(1+\max_{0\le i\le k-1}R(\Gamma_i))^{d(k-1)}<\infty\) by
\eqref{eq:curve_moment_condition}. Fix also \(\varepsilon_0>0\)
for the polynomial sparsity assumption in the case \(\alpha<d\).

\bet[LDP for the empirical measure of clusters]
\label{thm:fine}
Let $k \ge 2$. Suppose that $\rho_{k,n}\to\infty$, that
$n r_n^d\to0$, and that
\eqref{eq:curve_tail_condition} and
\eqref{eq:relaxed_tail_scale_condition} hold.
 If $\alpha<d$, suppose in addition that
$n r_n^d=O(n^{-\varepsilon_0})$.
Then $(\xi_{k,n})_{n\ge1}$ satisfies an LDP on
$M_f(E_k)$, equipped with the $\tau$-topology, with
speed $\rho_{k,n}$ and good rate function
$\sigma\mapsto h(\sigma\mid\tau_k)$.
\ent

The proof reduces the theorem to three propositions.  A normalized Poisson
random measure with intensity \(\tau_k\) satisfies the LDP.  The two
central comparisons show that this Poisson measure is exponentially equivalent
first to a cluster measure defined block by block and then to \(\xi_{k,n}\).  These
propositions are stated in Section~\ref{sec:proof-outline}, where the theorem
is proved conditional on them.  Sections~\ref{sec:prelim}--\ref{sec:2proof}
then establish the required estimates.

We next state the consequence for the free energy. Let \(\beta>0\) be
the inverse temperature, and let
\(W:E_k\to\mathbb R\) be bounded and measurable.  For example, suppose that
\(v:(\mathbb R^d)^k\to\mathbb R\) is bounded and measurable, invariant under
translations, and invariant under permutations of its \(k\) arguments.  It
then induces the energy
\[
W([\gamma_1,\ldots,\gamma_k])
:=\int_0^1v(\gamma_1(t),\ldots,\gamma_k(t))\,dt.
\]
Now, define
\[
H_W(\PP_n)
:=\frac1{\rho_{k,n}}
\sum_{\substack{\YY\subseteq\PP_n\\|\YY|=k}}
 s_{\ms{conn},n}(\YY)\,
 s_{\ms{iso},n}(\YY,\PP_n)\,
 W(r_n^{-1}\bar\YY)
=\int_{E_k}W\,d\xi_{k,n}.
\]
We now state the limiting free energy result. The derivation is postponed to the proof section.
\bec[Limiting free energy]
\label{cor:free}
For every \(\beta>0\),
\[
\lim_{n\to\infty}
\frac1{\rho_{k,n}}
\log\mathbb E\exp\big\{-\rho_{k,n}\beta H_W(\PP_n)\big\}
=
-\inf_{\mu\in M_f(E_k)}
\Big\{\beta\int W\,d\mu+h(\mu\mid\tau_k)\Big\}.
\]
\enc

We also allow a hard-core constraint. More precisely,  for
\(\varphi=\{\gamma_1,\ldots,\gamma_k\}\in E_k\), define
\[
d_{\min}(\varphi)
:=\min_{t\in[0,1]}\min_{i\ne j}|\gamma_i(t)-\gamma_j(t)|.
\]
The function \(d_{\min}\) is continuous because
\(|d_{\min}(\varphi)-d_{\min}(\psi)|\le2d_{E_k}(\varphi,\psi)\).  For
\(\omega>0\), let
\(B=\mathcal F_\omega:=\{\varphi\in E_k:d_{\min}(\varphi)\le\omega\}\).
Given a bounded measurable function \(w:E_k\to\mathbb R\), set
\[
W_{\mathrm{hc}}(\varphi)
:=
\begin{cases}
+\infty,&\varphi\in B,\\
w(\varphi),&\varphi\notin B.
\end{cases}
\]
Define the finite measure \(\mu_*:=e^{-\beta w}\mathbf{1}_{B^c}\tau_k\).
The Gibbs weight of the hard-core energy is understood as
\(\exp\{-\beta\rho_{k,n}\int w\,d\xi_{k,n}\}
\mathbf{1}\{\xi_{k,n}(B)=0\}\).

\begin{theorem}[Hard-core free energy]
\label{thm:hard-core-free-energy}
Under the assumptions of Theorem~\ref{thm:fine}, for every \(\beta>0\),
\[
\begin{aligned}
\lim_{n\to\infty}\frac1{\rho_{k,n}}
\log\mathbb E e^{-\beta\rho_{k,n}\int W_{\mathrm{hc}}\,d\xi_{k,n}}
&=
-\inf_{\substack{\mu\in M_f(E_k)\\\mu(B)=0}}
\Big\{\beta\int_{B^c}w\,d\mu+h(\mu\mid\tau_k)\Big\}\\
&=
-\tau_k(B)+\int_{B^c}(e^{-\beta w}-1)\,d\tau_k.
\end{aligned}
\]
The unique minimizer satisfies
\(d\mu_*/d\tau_k=e^{-\beta w}\mathbf{1}_{B^c}\).
\end{theorem}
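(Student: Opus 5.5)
Our plan is to prove the upper and lower bounds for the hard-core free energy separately, then evaluate the variational problem explicitly. Write \(F(\mu):=\beta\int_{B^c}w\,d\mu+h(\mu\mid\tau_k)\) for \(\mu(B)=0\), and \(F(\mu)=+\infty\) otherwise.

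\emph{Variational problem.} For \(\mu=f\tau_k\) with \(f=0\) on \(B\), the contribution of \(B\) to the entropy is \(\tau_k(B)\). On \(B^c\) we minimize pointwise the map \(f\mapsto\beta wf+f\log f-f+1\). Its minimum is attained at \(f=e^{-\beta w}\), where it equals \(1-e^{-\beta w}\). Strict convexity gives uniqueness of the minimizer \(\mu_*\), and the minimal value is \(\tau_k(B)-\int_{B^c}(e^{-\beta w}-1)\,d\tau_k\). This establishes the second equality and the minimizer claim.

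\emph{Upper bound.} The set \(\{\mu:\mu(B)=0\}\) is closed in the \(\tau\)-topology, because \(\mu\mapsto\mu(B)\) is continuous. The functional \(\mu\mapsto\beta\int w\,d\mu\) is also continuous, and it is bounded below on bounded-mass sets. We therefore bound
\[
e^{-\beta\rho_{k,n}\int W_{\mathrm{hc}}d\xi_{k,n}}\le e^{-\beta\rho_{k,n}\int w\,d\xi_{k,n}}\mathbf 1\{\xi_{k,n}\in\{\mu(B)=0\}\}.
\]
The Varadhan-type upper bound for closed sets then follows from the LDP of Theorem~\ref{thm:fine}. Exponential tightness in total mass is needed to handle the unbounded linear term, and it is available from the Poisson comparison. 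Concretely, we split on \(\{\xi_{k,n}(E_k)\le M\}\), using that \(\rho_{k,n}^{-1}\log P(\xi_{k,n}(E_k)>M)\to-\infty\) as \(M\to\infty\) because \(h\) grows like \(M\log M\).

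\emph{Lower bound.} This is the main obstacle. The event \(\{\xi_{k,n}(B)=0\}\) has empty interior, so the LDP lower bound cannot be applied directly. We follow the strategy announced in the introduction, in four steps.
\begin{enumerate}
\item Restrict to the event that no trajectory has range beyond the truncation cutoff. By the path-range estimates of Section~\ref{sec:prelim}, this costs only \(e^{-o(\rho_{k,n})}\).
\item On this event, the configuration decomposes into independent block configurations in the \(\rho_{k,n}\) cubes, with local interactions, as in the blockwise measure \(\eta_{k,n}\) of Section~\ref{sec:proof-outline}.
\item Change measure by tilting each short block independently with the weight \(e^{-\beta\int w\,d\eta^{\mathrm{block}}}\mathbf 1\{\eta^{\mathrm{block}}(B)=0\}\), normalized. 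The per-block log-normalizers sum to \(\rho_{k,n}\) times \(-\tau_k(B)+\int_{B^c}(e^{-\beta w}-1)d\tau_k+o(1)\). This follows from the Poisson approximation (Section~\ref{sec:1proof}): per block, the cluster count is asymptotically Poisson with mean \(\rho_{k,n}^{-1}\tau_k\), so the expected weight is \(1+\rho_{k,n}^{-1}(\ldots)+o(\rho_{k,n}^{-1})\).
\item Under the tilt, the boundary-layer localization errors of Section~\ref{sec:2proof} have probability \(o(1)\) per block. These are a cluster straddling cubes, or a forbidden cluster created across a boundary. Their dependency graph has bounded degree, since the paths are truncated.
\end{enumerate}
The quantitative Lov\'asz local lemma \cite[Theorem~1.1]{moser-tardos2009} then gives that, under the tilted product measure, all errors are avoided simultaneously with probability at least \(\prod(1-x_i)\ge e^{-o(\rho_{k,n})}\). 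On this event \(\xi_{k,n}=\eta_{k,n}\) exactly, so \(\xi_{k,n}(B)=0\) and the energies coincide.

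Undoing the tilt, we obtain
\[
\rho_{k,n}^{-1}\log\mathbb E e^{-\beta\rho_{k,n}\int W_{\mathrm{hc}}d\xi_{k,n}}\ge -\tau_k(B)+\int_{B^c}(e^{-\beta w}-1)d\tau_k-o(1).
\]
The delicate point is verifying the uniform smallness of the per-block error probabilities under the tilt. This holds because the tilt density is bounded by \(e^{\beta\|w\|_\infty\cdot(\text{cluster count})}\) divided by a normalizer close to 1, and the cluster counts per block have uniformly bounded exponential moments.
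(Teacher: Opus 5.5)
Your variational computation and the architecture of your lower bound match the paper. The common steps are: the void far process at cost \(e^{-nq_n}=e^{-o(\rho_{k,n})}\); the product tilt of the independent short blocks by \(G_B\); transfer of small error probabilities to the tilted law via Cauchy--Schwarz and the exponential moments of Lemma~\ref{lem:isolated-factorial}; a bounded-degree dependency graph; and exact equality \(\xi^{\mathrm s}_{k,n}=\eta^{\mathrm s}_{k,n}\) on the error-free event. The excluded events must be the paper's \(\{E_{i,n}>0\}\). These also cover a locally isolated cluster losing isolation to a short trajectory from a neighbouring block, not only straddling clusters; without that, exact equality fails. For simultaneous avoidance the paper's proof uses Liggett--Schonmann--Stacey domination. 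The asymmetric local lemma with \(x_i=2p_n\) works equally well and gives \((1-2p_n)^{\rho_{k,n}}=e^{-o(\rho_{k,n})}\). The bound \(\prod(1-x_i)\) is the classical form of the lemma, not the algorithmic statement of Moser--Tardos.

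Your upper bound takes a different route: you apply Varadhan's upper bound on the closed set \(\{\mu(B)=0\}\). The paper instead dominates the hard-core weight by the soft-core weight with \(W_M=w+M\mathbf 1_B\), applies Corollary~\ref{cor:free}, and lets \(M\to\infty\) by dominated convergence, so no new tail estimate is needed. Your route works, but not for the reason you give. The LDP bound \(\rho_{k,n}^{-1}\log\mathbb P(\xi_{k,n}(E_k)>M)\to-\infty\) controls probabilities, not \(\mathbb E[e^{\beta\|w\|_\infty\rho_{k,n}\xi_{k,n}(E_k)};\,\xi_{k,n}(E_k)>M]\). You need an a priori exponential moment such as \(\mathbb E e^{\theta\rho_{k,n}\xi_{k,n}(E_k)}\le\exp\{C\rho_{k,n}(e^\theta-1)\}\). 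This comes from Lemma~\ref{lem:isolated-factorial} with \(A=\Lambda\), as in the proof of Corollary~\ref{cor:free}.

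The step that is wrong as written is step 3. The cubes have volume \(\rho_{k,n}^{-1}\), so each block contains order-one many clusters. By Lemma~\ref{lem-equival}, \(\eta^{(i)}_{k,n}\) is close in total variation to a Poisson measure with intensity \(\tau_k\), not \(\rho_{k,n}^{-1}\tau_k\). Your expansion \(1+\rho_{k,n}^{-1}(\ldots)\) would make the \(\rho_{k,n}\) log-normalizers sum to \(O(1)\), contradicting the conclusion you draw from it. It would also yield the lower bound \(0\), which contradicts the upper bound whenever the right-hand side of the theorem is negative.

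The correct statement is \(a_n^{\mathrm s}\to a_B=\mathbb E G_B(\zeta_k^{(1)})=\exp\{\int(e^{-\beta w}\mathbf 1_{B^c}-1)\,d\tau_k\}\) by the Poisson product formula, so \(\sum_i\log a_n^{\mathrm s}=\rho_{k,n}(\log a_B+o(1))\). Proving this needs two inputs you omit:
\begin{enumerate}
\item passing from the full-block approximation to the short block (the far trajectories starting in one cube have Poisson mean \(m_nq_n\to0\));
\item uniform integrability, because \(G_B\) is unbounded when \(w\) takes negative values, so total variation convergence alone does not give convergence of \(\mathbb E G_B\). The paper truncates at level \(M\) and uses uniform second moments from Lemma~\ref{lem:isolated-factorial}.
\end{enumerate}
Correspondingly, in step 4 the normalizer is close to \(a_B\), not to \(1\). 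Your bound on the tilted error probabilities still holds, but it rests on \(a_n^{\mathrm s}\ge\min\{1,a_B/2\}>0\) eventually, which again requires the corrected step 3.
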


For the lower bound, define
\(
Z_n^{\mathrm{hc}}
:=
\mathbb E\Big[
\exp\Big\{-\beta\rho_{k,n}\int w\,d\xi_{k,n}\Big\}
\mathbf{1}\{\xi_{k,n}(B)=0\}
\Big].
\)
The following proposition obtains the lower bound by excluding far
trajectories and requiring exact agreement with the independent block
measure. It applies directly to the forbidden set \(B\).

\begin{proposition}[Hard-core lower bound]
\label{pr:hard-core-lower}
Under the assumptions of Theorem~\ref{thm:fine}, for every \(\beta>0\),
\[
\liminf_{n\to\infty}
\frac1{\rho_{k,n}}\log Z_n^{\mathrm{hc}}
\ge
-\tau_k(B)
+\int_{B^c}
\big(e^{-\beta w}-1\big)\,d\tau_k.
\]
\end{proposition}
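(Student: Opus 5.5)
We follow the strategy sketched in the introduction: restrict to a good event on which the global cluster measure \(\xi_{k,n}\) coincides exactly with an independent blockwise cluster measure, and then evaluate the blockwise Gibbs weight by independence. Partition \(\Lambda\) into the \(\rho_{k,n}\) cubes \(Q_1,\dots,Q_{\rho_{k,n}}\) of side \(\rho_{k,n}^{-1/d}\). Fix a range cutoff \(R_n\), the same one used in the localization estimates of Sections~\ref{sec:prelim}--\ref{sec:2proof}. Let \(F_n\) be the event that every particle satisfies \(R(\Gamma_i)\le R_n\). By the tail bound \eqref{eq:curve_tail_condition} and \eqref{eq:relaxed_tail_scale_condition}, the expected number of violating particles is \(nC e^{-cR_n^\alpha}=o(\rho_{k,n})\). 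Since the Poisson count of violators is independent of the restricted process, \(\mathbb P(F_n)=\exp\{-o(\rho_{k,n})\}\). Moreover, conditioning on \(F_n\) leaves a Poisson process with intensity \(n\,\lambda_\Lambda\otimes\mathbb Q(\,\cdot\cap\{R\le R_n\})\).

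On \(F_n\), every trajectory stays within \(r_nR_n\ll\rho_{k,n}^{-1/d}\) of its start. Define for each block \(Q_j\) the local cluster measure \(\eta^{(j)}\) built from the particles started in \(Q_j\), with isolation tested only against particles started in \(Q_j\). Then \(\xi_{k,n}=\rho_{k,n}^{-1}\sum_j\eta^{(j)}\) holds on \(F_n\cap\bigcap_j G_j^c\). Here \(G_j\) is the error event that some particle started in \(Q_j\) lies within distance \(r_n(L+2R_n)\) of \(\partial Q_j\) and has a neighbour in an adjacent block at that distance. Such a particle can create or destroy a cluster across the boundary, or change an isolation indicator. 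Each \(G_j\) depends only on the particles in \(Q_j\) and its \(3^d-1\) neighbours. Hence the dependency graph has degree at most \(3^d-1\), uniformly in \(n\).

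We then tilt. Let \(\widetilde{\mathbb P}\) be the law under which the block configurations are independent, each reweighted by \(\exp\{-\beta\int w\,d\eta^{(j)}\}\mathbf 1\{\eta^{(j)}(B)=0\}\) and normalized by \(z_j\). On \(F_n\cap\bigcap_jG_j^c\) we have exactly \(\xi_{k,n}(B)=0\) and \(\int w\,d\xi_{k,n}=\rho_{k,n}^{-1}\sum_j\int w\,d\eta^{(j)}\), so
\[
Z_n^{\mathrm{hc}}\ge \mathbb P(F_n)\prod_j z_j\;\widetilde{\mathbb P}\Big(\bigcap_j G_j^c\Big).
\]
A single block contains a \(k\)-cluster with probability \(O(1)\): its expected number of clusters is \(\tau_k(E_k)(1+o(1))\), and it has at least two clusters with probability \(o(1)\) since \(nr_n^d\to0\). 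This gives \(z_j=1+\int(e^{-\beta w}\mathbf 1_{B^c}-1)\,d\tau_k+o(1)\). More precisely, \(z_j=\exp\{-\tau_k(B)+\int_{B^c}(e^{-\beta w}-1)d\tau_k+o(1)\}\), which we obtain by writing \(z_j\) as a Poisson functional of the block and using the Poisson-approximation estimates of Section~\ref{sec:1proof}. Then \(\rho_{k,n}^{-1}\log\prod_jz_j\) converges to the claimed right-hand side. The same estimates give \(\widetilde{\mathbb P}(G_j)\le \epsilon_n\to0\) uniformly in \(j\). The tilt has a bounded density ratio \(e^{\beta\|w\|_\infty\cdot(\text{cluster count})}\) on the relevant events, and the boundary layers have relative volume \(O(r_nR_n\rho_{k,n}^{1/d})\to0\).

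The main obstacle is the last factor: \(\widetilde{\mathbb P}(\bigcap_jG_j^c)\ge e^{-o(\rho_{k,n})}\). The \(G_j\) are not independent, and their individual probabilities tend to zero only slowly. Here we apply the Lovász local lemma in the Moser--Tardos form \cite[Theorem~1.1]{moser-tardos2009} to the block variables, which are independent under \(\widetilde{\mathbb P}\). With \(x_j=2\epsilon_n\) and degree \(D=3^d-1\), the condition \(\epsilon_n\le x_j(1-x_j)^{D}\) holds for large \(n\). This yields \(\widetilde{\mathbb P}(\bigcap G_j^c)\ge\prod_j(1-2\epsilon_n)=\exp\{-O(\epsilon_n)\rho_{k,n}\}=e^{-o(\rho_{k,n})}\). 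The delicate point is to verify that each \(G_j\) is determined by the independent variables of a bounded neighbourhood of blocks, and to bound \(\widetilde{\mathbb P}(G_j)\) uniformly despite the tilt. For the uniform bound we would first control \(\widetilde{\mathbb P}(G_j)\le e^{\beta\|w\|_\infty}\mathbb P(G_j)/\min_j z_j\), restricting to at most one cluster per block, and absorb the multi-cluster case into the \(o(1)\) error. Combining the three factors and taking \(\liminf\) gives the proposition.
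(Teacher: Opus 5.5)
Your overall route is the paper's: empty far process at cost $e^{-o(\rho_{k,n})}$, a product tilt of the independent blocks by their Gibbs weights, and then a subexponential bound for the simultaneous absence of boundary errors. The gap is in the error events. The $G_j$ you chose are too coarse, and the claim $\sup_j\widetilde{\mathbb P}(G_j)\to0$ is false in general.

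Your $G_j$ is a pair event: it occurs as soon as two particles from adjacent blocks start within $r_n(L+2R_n)$ of each other, whether or not either belongs to a connected $k$-cluster. The small relative volume of the boundary layer does not make this unlikely, because a block contains about $m_n=s_n^{-(k-1)}$ particles, where $s_n=nr_n^d$. A fraction of order $b_nR_n$ of them lies in the boundary layer, with $b_n=r_n\rho_{k,n}^{1/d}=s_n^{k/d}$. Each of these has a partner across the boundary with probability of order $s_nR_n^d$. Hence the expected number of such pairs per block is of order $s_n^{\,2-k+k/d}R_n^{d+1}$. This does not tend to zero when $d\ge2$ and $k\ge2d/(d-1)$, e.g.\ for $d=3$, $k=3$, or $d=2$, $k\ge4$.

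In those cases $\mathbb P(G_j)$ stays bounded away from zero, and even tends to one when the exponent is negative. The tilt only reweights the $k$-cluster content of the blocks, so it does not remove these pairs. The local lemma then yields at best $(1-2\epsilon_n)^{\rho_{k,n}}$, an exponential loss at speed $\rho_{k,n}$. Moreover, events $G_j$ for well-separated blocks are independent under $\widetilde{\mathbb P}$, so $\widetilde{\mathbb P}(\bigcap_jG_j^c)$ is genuinely exponentially small and no refinement of that step can help. The appeal to ``the same estimates'' of Section~\ref{sec:1proof} does not apply, since those estimates concern $k$-clusters, not pairs.

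The missing observation is that a close pair across a block boundary affects neither $\xi_{k,n}$ nor the blockwise measure unless it involves a connected $k$-cluster. The paper therefore works with the two counts in $E_{i,n}$ of Lemma~\ref{lem:short-localization}:
\begin{enumerate}
\item connected short clusters in $Q_i$, isolated within $Q_i$, that are approached by a short trajectory from $Q_i^{\mathrm s,+}\setminus Q_i$; the mean is at most $I_{k+1,n}(Q_i)/k!\le Cs_n$, because $k+1$ particles must be in range;
\item connected clusters isolated in $Q_i^{\mathrm s,+}$ with starting points on both sides of $\partial Q_i$; the mean is at most $Ch_n^{\mathrm s}/a_n\to0$.
\end{enumerate}
Their joint absence already gives $\xi_{k,n}^{\mathrm s}=\eta_{k,n}^{\mathrm s}$, and $\sup_i\mathbb P(E_{i,n}>0)\to0$.

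Your tilt step also needs repair. The block weights are unbounded, and a block carries two or more clusters with probability bounded away from zero, since its cluster count is asymptotically Poisson with mean $\tau_k(E_k)$. So neither the expansion $z_j=1+\int(e^{-\beta w}\mathbf{1}_{B^c}-1)\,d\tau_k+o(1)$ nor the reduction to at most one cluster per block is valid. Instead, transfer to the tilted law using Cauchy--Schwarz and the uniform exponential moments from Lemma~\ref{lem:isolated-factorial}. This gives $\widehat{\mathbb P}_n^{\mathrm s}(E_{i,n}>0)\le C\,\mathbb P_n^{\mathrm s}(E_{i,n}>0)^{1/2}$ with $C$ independent of $n$ and $i$.

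Finally, events determined by the $3^d$ neighbouring blocks give a dependency graph of degree up to $5^d-1$ rather than $3^d-1$; this is harmless. With these changes, your Moser--Tardos step is a valid substitute for the Liggett--Schonmann--Stacey domination used in the proof of Lemma~\ref{lem:hard-core-boundary-void}.
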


\begin{proof}[Proof of Theorem~\ref{thm:hard-core-free-energy}]
For the upper bound, fix \(M>0\), set \(W_M:=w+M\mathbf{1}_B\), and write
\(F_M(\mu):=\int W_M\,d\mu\).  The hard-core Gibbs weight is at most
\(\exp\{-\beta\rho_{k,n}F_M(\xi_{k,n})\}\).  Corollary~\ref{cor:free} and
pointwise minimization over \(d\mu=f\,d\tau_k\) give
\[
\limsup_{n\to\infty}
\frac1{\rho_{k,n}}\log Z_n^{\mathrm{hc}}
\le
-\inf_{\mu\in M_f(E_k)}
\big\{\beta F_M(\mu)+h(\mu\mid\tau_k)\big\}
=
\int_{E_k}\big(e^{-\beta W_M}-1\big)\,d\tau_k.
\]
Letting \(M\to\infty\) and using dominated convergence yields the required
upper bound. Proposition~\ref{pr:hard-core-lower} gives the matching
lower bound under exactly the same hypotheses. If \(\mu=f\tau_k\) and
\(\mu(B)=0\), then \(f=0\) \(\tau_k\)-almost everywhere on \(B\), and
\[
\beta\int_{B^c}w\,d\mu+h(\mu\mid\tau_k)
=
\tau_k(B)
+
\int_{B^c}\big(\beta wf+f\log f-f+1\big)\,d\tau_k.
\]
The integrand is minimized uniquely at \(f=e^{-\beta w}\), with minimum
\(1-e^{-\beta w}\).  Measures that are not absolutely continuous with
respect to \(\tau_k\) have infinite entropy.  This proves the variational
identity and the formula for the unique minimizer.
\end{proof}

\section{Reduction to Poisson approximation and spatial localization}
\label{sec:out}
\label{sec:proof-outline}
The proof follows the block approximation strategy used for the static
model in \cite{gtf}. Motion allows trajectories from different starting
blocks to interact, so localization requires control of their ranges.
We reduce Theorem~\ref{thm:fine} to the Poisson reference LDP and two
comparison propositions, and prove the theorem from those inputs below.
Partition the torus into the cubes
\(Q_1,\ldots,Q_{\rho_{k,n}}\) from Section~\ref{sec:model}.  For
\(A\subseteq\Lambda\), let \(\PP_A\) denote the trajectories whose starting
positions lie in \(A\), and define
\[
\eta_{k,n}
:=
\frac1{\rho_{k,n}}
\sum_{\ell=1}^{\rho_{k,n}}
\sum_{\substack{\mathcal Y\subseteq\PP_n,\ |\mathcal Y|=k\\
\mathcal Y(0)\subseteq Q_\ell}}
s_{\ms{conn},n}(\mathcal Y)
 s_{\ms{iso},n}(\mathcal Y,\PP_{Q_\ell})
 \de_{r_n^{-1}\bar{\mathcal Y}}.
\]
Thus, \(\eta_{k,n}\) counts only clusters whose starting positions lie in one
cube and tests isolation using only particles from that cube.
Figure~\ref{fig:localized-measure} illustrates this localization.
\begin{figure}[!htbp]
    \centering
    \includegraphics[width=0.36\textwidth]{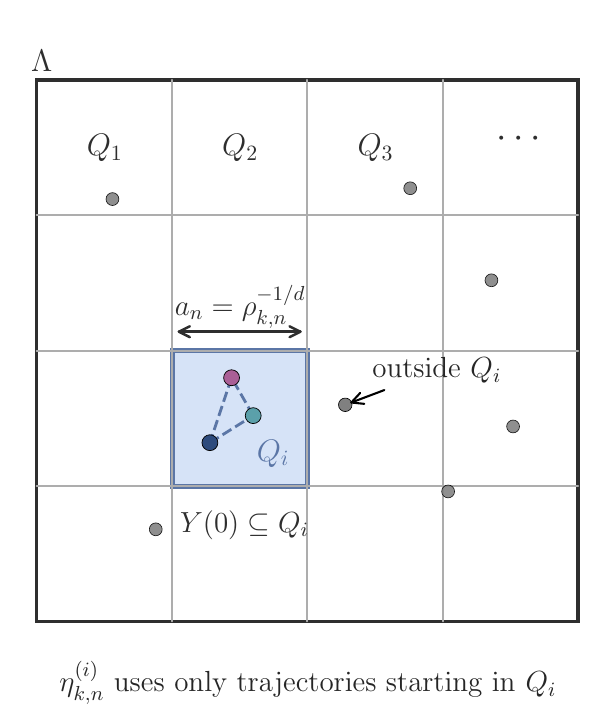}
    \caption{Illustration of the local cluster measure in one cube $Q_i$.}
    \label{fig:localized-measure}
\end{figure}

For \(\mu,\nu\in M_f(E_k)\), write
\(d_{\mathrm{TV}}(\mu,\nu)=\|\mu-\nu\|_{\mathrm{TV}}
:=|\mu-\nu|(E_k)\), where \(|\mu-\nu|\) is the variation measure.
For probability laws \(P,Q\) on \(M_f(E_k)\), write
\(d_{\mathrm{TV}}(P,Q):=\sup_A|P(A)-Q(A)|\), where \(A\) ranges over
the evaluation \(\sigma\)-field. Thus, total variation between laws is
half the variation norm of their difference. We denote the law of a
random measure \(X\) by \(\mathcal L(X)\).

Let \((\zeta_k^{(\ell)})_{\ell\ge1}\) be independent Poisson random measures
on \(E_k\), each with intensity \(\tau_k\), and set
\(\zeta_{k,n}=\rho_{k,n}^{-1}\sum_{\ell=1}^{\rho_{k,n}}
\zeta_k^{(\ell)}\). Three propositions reduce the LDP to this reference
process. The first gives its large deviation behavior, while the other two
control the Poisson approximation within cubes and the localization errors
between cubes.

\begin{proposition}[Poisson reference process]
\label{pr:poisson-ldp}
The sequence \((\zeta_{k,n})_{n\ge1}\) satisfies an LDP in the
\(\tau\)-topology with speed \(\rho_{k,n}\) and good rate function
\(h(\,\cdot\,\mid\tau_k)\).
\end{proposition}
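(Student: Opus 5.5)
The plan is to view \(\zeta_{k,n}\) as an empirical mean of i.i.d.\ random measures and apply Sanov/Cramér-type arguments in the \(\tau\)-topology. Write \(m=\rho_{k,n}\). By superposition, \(\sum_{\ell\le m}\zeta_k^{(\ell)}\) is a Poisson random measure with intensity \(m\tau_k\). Hence \(\zeta_{k,n}=m^{-1}\sum_{\ell\le m}\zeta_k^{(\ell)}\) is the empirical average of \(m\) i.i.d.\ copies of a Poisson random measure \(\zeta\) with finite intensity \(\tau_k\), viewed as a random element of \(M_f(E_k)\). I would use the abstract Cramér theorem for i.i.d.\ random variables in a locally convex space, or more concretely the projective-limit (Dawson--Gärtner) approach, as in \cite{dz98} and as done for the static case in \cite{gtf}. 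Only the fact that \(m\to\infty\) along integers matters.

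\textbf{Finite-dimensional LDPs and identification of the rate.} For a bounded measurable \(f\), the Poisson Laplace functional gives
\[
\Lambda(f):=\log\mathbb E\exp\Big\{\int f\,d\zeta\Big\}=\int_{E_k}(e^{f}-1)\,d\tau_k,
\]
which is finite everywhere. The same formula holds for every finite measurable partition \(A_1,\dots,A_r\) of \(E_k\): the vector \((\zeta(A_1),\dots,\zeta(A_r))\) has independent Poisson coordinates with means \(\tau_k(A_i)\). Cramér's theorem in \(\R^r\) then gives an LDP for \((\zeta_{k,n}(A_i))_i\) at speed \(m\) with rate
\[
\sum_i\big(x_i\log(x_i/\tau_k(A_i))-x_i+\tau_k(A_i)\big),
\]
with the usual conventions when \(\tau_k(A_i)=0\). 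The \(\tau\)-topology, restricted to \(M_f(E_k)\), is the projective limit over finite partitions of the maps \(\mu\mapsto(\mu(A_i))_i\). The Dawson--Gärtner theorem therefore gives an LDP on the projective limit space with rate equal to the supremum over partitions of these finite-dimensional rates.

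\textbf{Identifying the supremum with \(h(\cdot\mid\tau_k)\) and handling the limit space.} This is where the real work lies. First, the projective limit space consists of finitely additive nonnegative set functions. I must show that the rate is infinite off \(M_f(E_k)\), or else use exponential tightness to restrict. The rate bounds the total mass, since the partition \(\{E_k\}\) gives \(x\log(x/\tau_k(E_k))-x+\tau_k(E_k)\). If \(\tau_k(A_j)\to0\) along a sequence, finiteness of the rate forces \(x(A_j)\to0\). Hence a set function with finite rate is absolutely continuous with respect to \(\tau_k\) in the \(\varepsilon\)-\(\delta\) sense, and therefore countably additive. So it lies in \(M_f(E_k)\) and is \(\ll\tau_k\).

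Second, for \(\mu=f\tau_k\), the supremum over partitions of \(\sum_i\phi\big(\mu(A_i),\tau_k(A_i)\big)\) equals \(\int(f\log f-f+1)\,d\tau_k\). Jensen's inequality gives one direction. For the other, approximate \(f\) by simple functions measurable with respect to partitions, using convexity of \(x\log x\) and monotone convergence (a martingale refinement argument). Since the embedding of \(M_f(E_k)\) into the projective limit is continuous and the rate's effective domain sits inside \(M_f(E_k)\), the LDP transfers to \(M_f(E_k)\) with the \(\tau\)-topology (see \cite[Lemma~4.1.5]{dz98}).

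\textbf{Goodness.} Level sets \(\{h\le a\}\) are uniformly integrable families of densities: superlinearity of \(x\log x\) bounds \(\mu(A)\) by a function of \(\tau_k(A)\) that tends to zero. By the Dunford--Pettis theorem they are therefore weakly compact in \(L^1(\tau_k)\), which is exactly compactness in the \(\tau\)-topology, and closedness follows from lower semicontinuity of the supremum of continuous functionals. Alternatively, all of this can be cited as the Sanov-type result of \cite{gtf}, whose proof uses only finiteness of the intensity.
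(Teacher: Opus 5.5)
Your proof is correct, but it takes a genuinely different route from the paper. The paper proves nothing from scratch. It observes that \(\rho_{k,n}\zeta_{k,n}\) is a Poisson random measure with intensity \(\rho_{k,n}\tau_k\), which is your superposition remark. It then quotes the Poisson version of Sanov's theorem in the \(\tau\)-topology, \cite[Proposition~3.6]{hirsch-jahnel-patterson}, as a black box.

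You instead rebuild that black box along the lines of the proof of Sanov's theorem in the \(\tau\)-topology in \cite{dz98}. Your steps are:
\begin{enumerate}
\item Cram\'er's theorem for the independent Poisson vectors \((\zeta(A_i))_i\) over finite partitions.
\item Dawson--G\"artner on the space of finitely additive set functions.
\item Exclusion of merely finitely additive limits, via the \(\varepsilon\)-\(\delta\) absolute continuity that a finite rate forces.
\item The partition formula for \(h(\cdot\mid\tau_k)\).
\end{enumerate}

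Your route makes explicit that only finiteness of \(\tau_k\) enters, which is all the reduction to the Poisson reference process needs. The citation buys brevity. It also does not need \(\rho_{k,n}\) to be an integer; your i.i.d.\ averaging does, although the paper happens to assume this. Finally, it avoids the cylinder-\(\sigma\)-field bookkeeping when passing from the projective limit to \(M_f(E_k)\), which you delegate to \cite[Lemma~4.1.5]{dz98} without comment.

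Three small points in your sketch:
\begin{enumerate}
\item In the partition formula, the lower bound does not come from monotone convergence. It comes from martingale convergence of the partition-conditional densities, which is available because the Borel \(\sigma\)-field of the Polish space \(E_k\) is countably generated. Combine this with Fatou's lemma for the nonnegative integrand \(f\log f-f+1\).
\item The Dunford--Pettis step is valid but unnecessary. Each finite-dimensional Cram\'er rate is good, since the log-Laplace transform is finite everywhere. Dawson--G\"artner therefore already gives a good rate on the projective limit, and its level sets lie in \(M_f(E_k)\) and stay compact in the induced \(\tau\)-topology.
\item The finite-dimensional rates are lower semicontinuous rather than continuous. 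That is all closedness of the level sets requires.
\end{enumerate}
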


The first comparison couples the independent block contributions with
the Poisson reference process. Its proof combines an approximation in one
cube with independent maximal couplings over the cubes.
\begin{proposition}[Poisson approximation]
\label{pr:55}
The measures \(\eta_{k,n}\) and \(\zeta_{k,n}\) are exponentially equivalent
at speed \(\rho_{k,n}\) with respect to the distance in total variation.
\end{proposition}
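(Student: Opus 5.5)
We prove exponential equivalence by showing that for every \(\delta>0\),
\[
\limsup_{n\to\infty}\frac1{\rho_{k,n}}\log\mathbb P\big(d_{\mathrm{TV}}(\eta_{k,n},\zeta_{k,n})>\delta\big)=-\infty
\]
under a suitable coupling. Since the cubes \(Q_\ell\) are disjoint, the restrictions \(\PP_{Q_\ell}\) are independent Poisson processes. Moreover, \(\eta_{k,n}=\rho_{k,n}^{-1}\sum_\ell\eta^{(\ell)}\), where each block contribution \(\eta^{(\ell)}\) is a measurable function of \(\PP_{Q_\ell}\) alone. By translation invariance on the torus, the \(\eta^{(\ell)}\) are i.i.d. copies of a point measure \(\eta^{(1)}\) on \(E_k\).

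\textbf{Step 1 (one-block comparison).} We show that \(p_n:=d_{\mathrm{TV}}(\mathcal L(\eta^{(1)}),\mathcal L(\zeta_k^{(1)}))\to0\). The number of particles in \(Q_1\) is Poisson with mean \(m_n:=n/\rho_{k,n}\), and the process \(\PP_{Q_1}\) is \(m_n\)-many uniform points in a cube of volume \(\rho_{k,n}^{-1}\), each carrying a path. We decompose according to the number of particles \(N_1\).

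\begin{itemize}
\item[(a)] On \(\{N_1<k\}\), \(\eta^{(1)}=0\).
\item[(b)] On \(\{N_1=k\}\), the isolation indicator is trivially \(1\). The joint law of the shape together with the connection indicator is computed directly. With \(x_1\) uniform and relative starting offsets \(y_i=r_n^{-1}(x_i-x_1)\), the connection probability integrated against \(\ms{Leb}\) gives
\[
\mathbb P(N_1=k,\ \eta^{(1)}=\delta_A)=e^{-m_n}\frac{m_n^k}{k!}\,\rho_{k,n}^{k-1}r_n^{d(k-1)}\,(k!\,\tau_k(A))\,(1+o(1))
=e^{-m_n}\tau_k(A)(1+o(1)),
\]
using \(m_n^k\rho_{k,n}^{k-1}r_n^{d(k-1)}=n^kr_n^{d(k-1)}/\rho_{k,n}=1\). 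This holds up to boundary effects: we need offsets \(y\) with \(x_1+r_ny\in Q_1\), and the cube side \(\rho_{k,n}^{-1/d}\) is much larger than \(r_n\) because \(\rho_{k,n}r_n^d=(nr_n^d)^k/(nr_n^d)\cdot\ldots\to0\) (more precisely \(\rho_{k,n}r_n^{d}=(nr_n^d)^k\to0\)). Dominated convergence using the moment bound \eqref{eq:curve_moment_condition} then controls the fraction of \(x_1\) near the boundary, and the uniform-in-\(A\) error is integrable.
\item[(c)] \(\mathbb P(N_1\ge k+1)=O(m_n^{k+1})\). Compared with the mass \(\sim m_n^k\), this is not small unless \(m_n\to0\) — here we must be careful. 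Indeed \(m_n=n/\rho_{k,n}\) can be large; \(\rho_{k,n}\) need not exceed \(n\). So instead of conditioning on \(N_1\) we compute factorial moments: the expected number of connected \(k\)-sets in \(Q_1\) equals \(\tau_k(E_k)(1+o(1))\) by the computation in (b), the expected number of non-isolated ones is \(O(nr_n^d)\to0\) by Mecke's formula and the moment bounds, and the expected number of pairs of overlapping connected \(k\)-sets is \(o(1)\) for the same reason. A Poisson process approximation on \(E_k\), for example via Chen–Stein/Barbour–Brown bounds for the point process of connected sets, then yields \(p_n\to0\), with \(\tau_k(E_k)<\infty\) ensuring a summable Poisson limit.
\end{itemize}

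\textbf{Step 2 (coupling and large-deviation bound).} For each \(\ell\) we couple \(\eta^{(\ell)}\) and \(\zeta_k^{(\ell)}\) maximally and independently across blocks, so that the indicators \(I_\ell=\mathbf 1\{\eta^{(\ell)}\ne\zeta_k^{(\ell)}\}\) are i.i.d. Bernoulli\((p_n)\). Then
\[
d_{\mathrm{TV}}(\eta_{k,n},\zeta_{k,n})\le\rho_{k,n}^{-1}\sum_\ell I_\ell\big(\eta^{(\ell)}(E_k)+\zeta_k^{(\ell)}(E_k)\big).
\]
We truncate at \(M\): the terms with block masses exceeding \(M\) are controlled by an exponential Chebyshev bound. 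This requires a uniform exponential moment of \(\eta^{(1)}(E_k)\), which we obtain because \(\eta^{(1)}(E_k)\le\binom{N_1}{k}\)-type bounds are not good enough for large \(m_n\). So we instead bound the mass by a Poisson-type variable via the factorial-moment estimates \(\mathbb E\binom{\eta^{(1)}(E_k)}{j}\le C^j/j!\). For the truncated part, Cramér/Chernoff for Bernoulli\((p_n)\) gives
\[
\mathbb P\Big(\sum_\ell I_\ell>\delta\rho_{k,n}/(2M)\Big)\le\exp\{-\rho_{k,n}c\log(1/p_n)\}
\]
and hence superexponential decay since \(p_n\to0\). Letting \(M\to\infty\) completes the argument.

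The main obstacle is the uniform exponential tail of the per-block cluster count together with the \(p_n\to0\) estimate when \(m_n\) is not small. This is where we would rely on factorial-moment bounds derived from Mecke's formula and the tail assumption \eqref{eq:curve_tail_condition}.
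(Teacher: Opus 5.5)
Your proposal is correct and follows essentially the paper's route: independent maximal couplings over the blocks, one-block total-variation convergence from a Poisson process approximation with the same three error terms (intensity mismatch, blockers of order \(nr_n^d\), overlapping \(k\)-sets) as in Lemma~\ref{lem-equival}, and uniform exponential moments from the factorial-moment bound \(\mathbb E[(N_h)_j]\le\Lambda_h^j\) of Lemma~\ref{lem:isolated-factorial}; your truncation at \(M\) combined with a binomial Chernoff bound on the number of mismatched blocks is only a minor variant of the paper's Cauchy--Schwarz estimate \(\widehat{\mathbb E}e^{aD_{i,n}}\le1+C_a\epsilon_n^{1/2}\) followed by exponential Markov. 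The step you flag as the main obstacle rests on an observation you do not state: two distinct isolated connected clusters cannot share a trajectory, so the falling factorial counts ordered lists of pairwise disjoint \(k\)-sets and Mecke's formula factorizes into \(\Lambda_h^j\), with \(\Lambda_h\le C\) from the path moments alone; without this disjointness, Mecke plus moment bounds cannot give the estimate, since the number of connected (non-isolated) \(k\)-sets in a block has infinite exponential moments.
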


The second comparison controls the effects of interactions across the
boundaries of the starting cubes. It treats both clusters that lose
isolation outside their cube and global clusters whose starting positions
lie in several cubes.
\begin{proposition}[Spatial localization]
\label{pr:56}
The measures \(\xi_{k,n}\) and \(\eta_{k,n}\) are exponentially equivalent
at speed \(\rho_{k,n}\) with respect to the distance in total variation.
\end{proposition}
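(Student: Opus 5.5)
We must show that for every \(\delta>0\),
\[
\limsup_{n\to\infty}\rho_{k,n}^{-1}\log\mathbb P\big(\|\xi_{k,n}-\eta_{k,n}\|_{\mathrm{TV}}>\delta\big)=-\infty .
\]
Since both measures carry mass \(\rho_{k,n}^{-1}\) per cluster, \(\rho_{k,n}\|\xi_{k,n}-\eta_{k,n}\|_{\mathrm{TV}}\) is at most the number of \(k\)-sets \(\YY\) whose contributions to \(\xi_{k,n}\) and \(\eta_{k,n}\) differ. Such a \(\YY\) is of one of two kinds:
\begin{enumerate}
\item[(a)] \(\YY(0)\) meets several cubes, \(\YY\) is connected, and \(\YY\) is isolated in \(\PP_n\);
\item[(b)] \(\YY(0)\subseteq Q_\ell\), \(\YY\) is connected and isolated in \(\PP_{Q_\ell}\), but \(\YY\) is not isolated in \(\PP_n\). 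In this case some trajectory started outside \(Q_\ell\) comes within \(Lr_n\) of \(\YY\).
\end{enumerate}
In both cases some trajectory started in one cube approaches, within \(Lr_n\), a trajectory started in another cube. It therefore suffices to show that the number \(N_n^{\mathrm{bad}}\) of such defective configurations exceeds \(\delta\rho_{k,n}\) only with superexponentially small probability.

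\textbf{Step 1: truncate the ranges.} Fix a cutoff \(u_n\). Call a trajectory \emph{long} if \(R(\Gamma_i)>u_n\). If all trajectories are short, a cluster is contained in a region of diameter \(O(r_n(L+u_n))\) around its starting points. A defect then requires some starting point within distance \(O(r_n(L+u_n))\) of a cube boundary, where that boundary is shared with another trajectory. For long trajectories, the number of cubes whose clusters a long trajectory can affect is capped by \(\min\{\rho_{k,n},C(r_nR\rho_{k,n}^{1/d}+1)^d\}\). I will show that the total number of affected blocks is negligible. Using \(\mathbb E e^{\theta X}\) for the Poisson compound sum, with \(\theta=M\), one needs
\[
n\,\mathbb E\big[\big(e^{M\min\{\rho_{k,n},(r_nR\rho_{k,n}^{1/d})^d\}}-1\big)\one\{R>u_n\}\big]=o(\rho_{k,n}).
\]
Via \eqref{eq:curve_tail_condition}, this reduces to comparing \(M(r_n u\rho^{1/d})^d\) with \(cu^\alpha\). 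When \(\alpha\ge d\), I choose \(u_n\) so that \(r_n\rho_{k,n}^{1/d}=(n r_n^{d})^{k/d}\to0\) makes the exponent beat the tilt, and then \(n e^{-cu_n^\alpha}\ll\rho_{k,n}\) with \(u_n\sim(\log n)^{1/\alpha}\). When \(\alpha<d\), the capping at \(\rho_{k,n}\) together with \(\rho_{k,n}r_n^\alpha\log n\to0\) and the polynomial sparsity \(nr_n^d=O(n^{-\varepsilon_0})\) handles the large-\(R\) range. This choice of \(u_n\) and the single exponential estimate are the main obstacle; I expect a careful split of the \(R\)-integral at \(\rho_{k,n}^{1/d}/r_n\) times a constant.

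\textbf{Step 2: boundary layers.} Work on the event that all trajectories are short, modulo the negligible blocks from Step 1. A defect in cube \(Q_\ell\) requires at least two trajectories: one starting in the layer of width \(w_n=C r_n(L+u_n)\) near \(\partial Q_\ell\), and one within distance \(w_n\) of it. The resulting configurations in different cubes, after a finite colouring of cubes into classes of non-adjacent cubes, are independent Bernoulli-type counts. The expected number per cube is \(O(n^2 r_n^{d}u_n^{d}\cdot w_n\rho_{k,n}^{1/d}/\rho_{k,n})\). Relative to \(1\), this is \(p_n\to0\), because \(w_n\rho_{k,n}^{1/d}\to0\) up to logarithmic factors, using \(nr_n^d\to0\). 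The same bound holds for genuine \(k\)-clusters straddling cubes.

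\textbf{Step 3: binomial tails.} Within each colour class, the count of defective cubes is stochastically dominated by a binomial with \(\rho_{k,n}\) trials and success probability \(p_n\to0\). The number of defective clusters per cube has uniformly exponential tails, since it is bounded by Poisson counts. A Chernoff bound of the form
\[
\mathbb P(\mathrm{Bin}\ge \delta\rho)\le\exp\{-\rho\,\delta\log(\delta/(ep_n))\}
\]
then gives the superexponential decay, which combined with Step 1 proves the proposition.
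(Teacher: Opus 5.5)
Your architecture (range truncation with a capped count of affected blocks, per-block error counts, a finite colouring, exponential Chebyshev) matches the paper's, but Step~2 fails as written.

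Write \(s_n=nr_n^d\), \(m_n=n/\rho_{k,n}=s_n^{-(k-1)}\) and \(b_n=r_n\rho_{k,n}^{1/d}=s_n^{k/d}\). Since \(n^2r_n^d/\rho_{k,n}=s_n^{2-k}\), your own expected count per cube is of order \(s_n^{2-k+k/d}u_n^{d}(L+u_n)\). This diverges whenever \(k(d-1)\ge2d\), e.g.\ \(d=2\), \(k\ge4\), or \(d=3\), \(k\ge3\). The problem is not slack in the estimate. A cube holds about \(m_n\) particles and of order \(m_ns_n=s_n^{-(k-2)}\) close pairs, a fraction \(\gtrsim b_n\) of them at a boundary, so the pair event is not rare. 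Replacing the \(k\)-cluster defect by a pair event throws away exactly the rarity you need.

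Keep the whole cluster in the error event instead. A lost isolation is a connected \(k\)-cluster anchored in \(Q_\ell\) plus one approaching trajectory. The Mecke integral of this connected \((k+1)\)-configuration is at most \(Cn^{k+1}r_n^{dk}|Q_\ell|=Cs_n\), and no boundary layer is needed. A straddling cluster is a connected \(k\)-tuple anchored in the outer layer, with integral at most \(C\rho_{k,n}|\text{layer}|\lesssim b_n(L+u_n)\). For this to vanish, the cutoff must be \(u_n\asymp(\log m_n)^{1/q}\) with \(q=\min\{\alpha,d\}\), not \((\log n)^{1/\alpha}\). Indeed, \(s_n^{k/d}(\log n)^{1/\alpha}\) need not tend to zero when \(s_n\to0\) slowly, while Step~1 only needs \(m_n\mathbb P(R>u_n)\to0\).

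Step~3 also lacks its key input. The Chernoff argument over colour classes needs \(\sup_i\mathbb E e^{\theta E_i}\to1\) for every \(\theta>0\), where \(E_i\) counts the erroneous clusters charged to block \(i\). Counting defective blocks is not enough, since one block can carry many errors. By Cauchy--Schwarz, the needed bound follows from \(\mathbb P(E_i>0)\to0\) together with uniformly bounded exponential moments of every order. ``Bounded by Poisson counts'' does not supply these: the number of particles per block is Poisson with mean \(m_n\to\infty\), and counts of connected tuples without isolation have no finite exponential moments at all. The missing idea is that distinct isolated connected clusters are disjoint. The multivariate Mecke formula then bounds the factorial moments of their number by \(\Lambda^m\), with \(\Lambda\le C\rho_{k,n}|A|=O(1)\) for a block-sized anchor set \(A\). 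This gives \(\mathbb E e^{\theta N}\le e^{\Lambda(e^\theta-1)}\) (Lemma~\ref{lem:isolated-factorial}).

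The same input is needed in Step~1, because a small set of affected blocks does not by itself bound the number of affected clusters. Clusters containing a long trajectory number at most the number \(N_n^{\mathrm f}\) of long trajectories, by disjointness. The remaining changed clusters are dominated by per-block isolated-cluster counts of the short process, whose exponential moments are bounded but do not vanish. The affected set \(\mathcal D_n\) depends only on the long process, which is independent of the short one by thinning. So you can condition on it and choose \(\varepsilon\) in \(|\mathcal D_n|\le\varepsilon\rho_{k,n}\) after fixing the tilt.

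Finally, for \(\alpha<d\) the split belongs at \(u=r_n^{-1}\), where \((b_nu)^d=\rho_{k,n}(r_nu)^d\) reaches the cap. Below it, \((b_nu)^d\le\rho_{k,n}r_n^\alpha u^\alpha\); above it, \(\rho_{k,n}\le\rho_{k,n}r_n^\alpha u^\alpha\). Hence \(\rho_{k,n}r_n^\alpha\to0\) suffices.
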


\begin{proof}[Proof of Theorem~\ref{thm:fine}]
We first put the two comparisons on one probability space. Write
\(S:=M_f(E_k)\), equipped with its evaluation \(\sigma\)-field.
Since \(E_k\) is Polish, this measurable space is standard Borel,
so the original pair \((\xi_{k,n},\eta_{k,n})\) admits a regular
conditional law \(\mathcal K_n(\nu,\cdot)\) of \(\xi_{k,n}\)
given \(\eta_{k,n}=\nu\). Let \(\pi_n\) be a coupling of
\(\eta_{k,n}\) and \(\zeta_{k,n}\) supplied by
Proposition~\ref{pr:55}. Define a probability law on \(S^3\) by
\[
\widehat{\mathbb P}_n(d\mu,d\nu,d\omega)
:=\mathcal K_n(\nu,d\mu)\,\pi_n(d\nu,d\omega).
\]
Denote the three coordinates by \(\xi_{k,n},\eta_{k,n},\zeta_{k,n}\).
Integrating in \(\mu\) recovers \(\pi_n\), because
\(\mathcal K_n(\nu,S)=1\). Integrating in \(\omega\) recovers
\(\mathcal K_n(\nu,d\mu)\mathcal L(\eta_{k,n})(d\nu)\), which
is the original joint law of \((\xi_{k,n},\eta_{k,n})\).
Thus, both comparison propositions apply under
\(\widehat{\mathbb P}_n\). For every \(\delta>0\), the triangle
inequality gives
\[
\begin{aligned}
\widehat{\mathbb P}_n\big(
d_{\mathrm{TV}}(\xi_{k,n},\zeta_{k,n})>\delta\big)
&\le\widehat{\mathbb P}_n\big(
d_{\mathrm{TV}}(\xi_{k,n},\eta_{k,n})>\delta/2\big)
+\widehat{\mathbb P}_n\big(
d_{\mathrm{TV}}(\eta_{k,n},\zeta_{k,n})>\delta/2\big),\\
\limsup_{n\to\infty}\frac1{\rho_{k,n}}
\log\widehat{\mathbb P}_n\big(
d_{\mathrm{TV}}(\xi_{k,n},\zeta_{k,n})>\delta\big)
&=-\infty.
\end{aligned}
\]
The second line follows by bounding the sum in the first line by twice
its larger summand and using Propositions~\ref{pr:56} and \ref{pr:55}.
The additional term \(\rho_{k,n}^{-1}\log2\) tends to zero.

To verify the topological approximation hypothesis, let
\(\mathcal F=(f_1,\ldots,f_m)\) be any finite nonempty family of
bounded measurable functions on \(E_k\), and put
\(c_{\mathcal F}:=\max_j\|f_j\|_\infty\).
The pseudometric \(d_{\mathcal F}\) defined below controls these
test-function evaluations. For every \(\varepsilon>0\), we have
\[
\begin{aligned}
d_{\mathcal F}(\mu,\nu)
&:=\max_{j\le m}\Big|\int f_j\,d\mu-\int f_j\,d\nu\Big|
\le c_{\mathcal F}d_{\mathrm{TV}}(\mu,\nu),\\
\widehat{\mathbb P}_n\big(
d_{\mathcal F}(\xi_{k,n},\zeta_{k,n})>\varepsilon\big)
&\le\widehat{\mathbb P}_n\big(
d_{\mathrm{TV}}(\xi_{k,n},\zeta_{k,n})>
\varepsilon/c_{\mathcal F}\big)
\qquad(c_{\mathcal F}>0).
\end{aligned}
\]
If \(c_{\mathcal F}=0\), then \(d_{\mathcal F}=0\), and the
probability on the left is zero for every \(\varepsilon>0\).
Otherwise, the preceding total variation estimate shows that its
normalized logarithm tends to \(-\infty\).
These pseudometrics generate the \(\tau\)-topology, separate finite
measures, and are closed under finite maxima by concatenating the
families of test functions. Their balls are evaluation-measurable,
and the same holds for their discrepancy events on \(S^2\), since
integration against a bounded measurable function is
evaluation-measurable. The evaluation maps embed \(S\) into a product
of copies of \(\mathbb R\); hence, this topology is Hausdorff and
completely regular.

The exponential-equivalence case of
\cite[Theorem~1.13]{eichelsbacher-schmock} transfers a full LDP with
a good rate function between laws on such a space when a coupling has
superexponentially small discrepancy for every generating pseudometric
and every positive threshold. Its measurable-space hypothesis only
requires the \(\sigma\)-field to contain the pseudometric balls,
as verified above. In the cited theorem, take both spaces to be \(S\)
and the reference laws to be those of \(\zeta_{k,n}\).
Use a constant sequence of identity maps as the approximating maps,
with the identity as their limit.
The maps are measurable and continuous, and their approximation error
is identically zero on every rate-function level set.
Proposition~\ref{pr:poisson-ldp} supplies the full reference LDP at
speed \(\rho_{k,n}\) with good rate function
\(h(\,\cdot\,\mid\tau_k)\), while the coupling above verifies the
required exponential approximation for every \(d_{\mathcal F}\).
The rate function produced by the identity map is
\(\inf_{\nu:\,\nu=\mu}h(\nu\mid\tau_k)=h(\mu\mid\tau_k)\).
Thus, \(\xi_{k,n}\) has the asserted full LDP with the same speed
and good rate function.
\end{proof}

\begin{proof}[Proof of Proposition~\ref{pr:poisson-ldp}]
The measure \(\rho_{k,n}\zeta_{k,n}\) is a Poisson random measure with
intensity \(\rho_{k,n}\tau_k\). The Poisson version of Sanov's theorem
in the \(\tau\)-topology therefore gives the stated LDP; see
\cite[Proposition~3.6]{hirsch-jahnel-patterson}.
\end{proof}

The remainder of the argument proves the two comparison propositions.
Section~\ref{sec:prelim} controls trajectories with large range,
Section~\ref{sec:1proof} proves Proposition~\ref{pr:55}, and
Section~\ref{sec:2proof} proves Proposition~\ref{pr:56}.  Finally,
Section~\ref{sec:interact} proves the results for the free energy.

\section{Control of trajectories with large range}
\label{sec:prelim}

This section controls the blocks affected by trajectories with large range.
We use one cutoff throughout the ordinary LDP and the hard-core lower bound.
The number of affected blocks is superexponentially negligible, while the
event that no trajectory exceeds the cutoff has the subexponential cost
needed for the hard-core constraint. The estimate below uses the fact that
one trajectory can affect at most the total number of blocks.

Put \(s_n:=nr_n^d\), \(m_n:=n/\rho_{k,n}=s_n^{-(k-1)}\),
\(a_n:=\rho_{k,n}^{-1/d}\), and
\(b_n:=r_n/a_n=s_n^{k/d}\). Thus, \(s_n\to0\), \(m_n\to\infty\),
and \(b_n\to0\). Let \(q:=\min\{\alpha,d\}\), and choose \(c_0>0\)
such that \(M_0:=\mathbb E_\Q e^{c_0R(\Gamma)^q}<\infty\), as permitted
by \eqref{eq:curve_tail_condition}. For all sufficiently large \(n\), set
\(v_n:=(8c_0^{-1}\log m_n)^{1/q}\) and
\(q_n:=\Q(R(\Gamma)>v_n)\). The identities for \(m_n\) and \(b_n\)
give \(b_nv_n=C s_n^{k/d}(\log(1/s_n))^{1/q}\to0\), while Markov's
inequality gives \(q_n\le M_0m_n^{-8}\). In particular,
\begin{equation}
v_n\longrightarrow\infty,\qquad
b_nv_n\longrightarrow0,\qquad
m_nq_n\le M_0m_n^{-7}\longrightarrow0.
\label{eq:range-cutoff}
\end{equation}

Let \(\widetilde{\mathcal P}_n^{\mathrm s}\) and
\(\widetilde{\mathcal P}_n^{\mathrm f}\) be the restrictions of the
marked Poisson process to \(R(\Gamma)\le v_n\) and \(R(\Gamma)>v_n\),
respectively, and denote their path-space images by \(\PP_n^{\mathrm s}\)
and \(\PP_n^{\mathrm f}\). Poisson thinning makes these two processes
independent. For \(A\subseteq\Lambda\), write \(\PP_A^{\mathrm s}\)
for the short trajectories whose starting positions belong to \(A\).
Define \(N_n^{\mathrm f}:=\#\widetilde{\mathcal P}_n^{\mathrm f}\)
and \(F_n:=\{N_n^{\mathrm f}=0\}\). Since \(N_n^{\mathrm f}\) is
Poisson with mean \(nq_n=o(\rho_{k,n})\),
\begin{equation}
\mathbb P(F_n)=e^{-nq_n}=\exp\{-o(\rho_{k,n})\}.
\label{eq:cutoff-void-probability}
\end{equation}
This identity will allow us to work with the short process in the hard-core
lower bound. The ordinary LDP instead uses the stronger estimate for the
number of affected blocks proved below.

Set \(h_n^{\mathrm s}:=2r_nv_n+Lr_n\) and
\(Q_i^{\mathrm s,+}:=\{x\in\Lambda:d_{\mathbb T}(x,Q_i)\le kh_n^{\mathrm s}\}\).
By \eqref{eq:range-cutoff}, \(h_n^{\mathrm s}=o(a_n)\). For a marked
trajectory \((z,\gamma)\), define
\(\mathcal D_n(z,\gamma):=\{i:d_{\mathbb T}(z,Q_i)\le c_*r_nR(\gamma)\}\),
where \(c_*:=2L+4\), and let \(\mathcal D_n\) be the union of these
sets over \(\widetilde{\mathcal P}_n^{\mathrm f}\). The following
estimate controls both the number of far trajectories and their spatial
influence.

\begin{lemma}[Blocks influenced by trajectories with large range]
\label{lem:far-influence}
Under the assumptions of Theorem~\ref{thm:fine}, for every \(\delta>0\),
\[
\lim_{n\to\infty}\frac1{\rho_{k,n}}
\log\mathbb P(N_n^{\mathrm f}\ge\delta\rho_{k,n})
=
\lim_{n\to\infty}\frac1{\rho_{k,n}}
\log\mathbb P(|\mathcal D_n|\ge\delta\rho_{k,n})=-\infty.
\]
For all sufficiently large \(n\), if a far trajectory approaches within
distance \(Lr_n\) of a connected short cluster whose first trajectory in
the fixed Borel order starts in \(Q_i\), then \(i\in\mathcal D_n\).
\end{lemma}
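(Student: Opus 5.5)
The plan has three parts: a Poisson tail bound for \(N_n^{\mathrm f}\), an exponential-moment bound for \(|\mathcal D_n|\), and a short geometric argument for the final assertion.

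\textbf{Number of far trajectories.} The first limit is a Poisson tail estimate. \(N_n^{\mathrm f}\) is Poisson with mean \(nq_n\). By \eqref{eq:range-cutoff},
\[
nq_n\le M_0\,n\,m_n^{-8}=M_0\,\rho_{k,n}m_n^{-7}.
\]
The Chernoff bound \(\mathbb P(N\ge x)\le\exp\{-x\log(x/(e\lambda))\}\) with \(x=\delta\rho_{k,n}\) then gives
\[
\mathbb P(N_n^{\mathrm f}\ge\delta\rho_{k,n})\le\exp\{-\delta\rho_{k,n}\log(\delta m_n^{7}/(eM_0))\}.
\]
Since \(m_n\to\infty\), this is superexponential.

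\textbf{Number of influenced blocks.} Each far mark \((z,\gamma)\) influences at most
\[
Y(\gamma):=\min\{\rho_{k,n},\,C(1+c_*b_nR(\gamma))^d\}
\]
blocks, because the blocks have side \(a_n\) and \(r_n/a_n=b_n\). Hence \(|\mathcal D_n|\le\sum Y(\Gamma_i)\), with the sum over the Poisson process \(\widetilde{\mathcal P}_n^{\mathrm f}\). The Poisson exponential formula with a parameter \(\theta_n\to\infty\) gives
\[
\mathbb P(|\mathcal D_n|\ge\delta\rho_{k,n})\le\exp\Big\{-\theta_n\delta\rho_{k,n}+n\,\mathbb E_\Q\big[e^{\theta_nY}\one\{R>v_n\}\big]\Big\}.
\]
It therefore suffices to show that \(n\,\mathbb E_\Q[e^{\theta_nY}\one\{R>v_n\}]=O(\rho_{k,n})\). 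I treat two cases.

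\emph{Case \(\alpha\ge d\), so \(q=d\).} Take \(\theta_n=\kappa\log m_n\). Then \(\theta_nb_n^d=\kappa s_n^k\log m_n\to0\). For large \(n\) this gives \(\theta_nY\le C\theta_n+c_0R^d/2\). Using \(e^{c_0R^d/2}\le e^{c_0R^d}e^{-c_0v_n^d/2}\) on \(\{R>v_n\}\), the expectation is at most
\[
e^{C\theta_n}M_0m_n^{-4}.
\]
Multiplying by \(n=\rho_{k,n}m_n\) gives \(O(\rho_{k,n})\) once \(\kappa\) is small.

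\emph{Case \(\alpha<d\).} Here the cap \(Y\le\rho_{k,n}\) is essential. On \(\{Y=y\}\) with \(y\) large we have \(R\gtrsim y^{1/d}/b_n\), which has tail \(\exp\{-cy^{\alpha/d}b_n^{-\alpha}\}\). The gain \(e^{\theta_ny}\) is dominated provided
\[
\theta_n y^{1-\alpha/d}\le\tfrac c2 b_n^{-\alpha}\quad\text{for all }y\le\rho_{k,n}.
\]
Since \(b_n^{-\alpha}=r_n^{-\alpha}\rho_{k,n}^{-\alpha/d}\), this reduces to \(\theta_n\rho_{k,n}r_n^\alpha\le c/2\). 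That holds for \(\theta_n=\kappa\log m_n\lesssim\log n\), by \eqref{eq:relaxed_tail_scale_condition}.

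On the bounded part \(\{v_n<R\le1/b_n\}\), the factor \(e^{C\theta_n}\) must be beaten by \(q_n\le M_0m_n^{-8}\). This requires \(m_n\) to be polynomially large in \(n\), which is exactly where \(nr_n^d=O(n^{-\varepsilon_0})\) enters: it gives \(m_n\ge n^{\varepsilon_0(k-1)}\). The intermediate ranges are handled by a dyadic decomposition of \(R\). I expect this case, namely balancing \(\theta_n\) against the cap and the tail simultaneously, to be the main obstacle.

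\textbf{Geometric claim.} Let the short connected cluster have first start \(y\in Q_i\), and let a member starting at \(x\) be within \(Lr_n\) of a far trajectory \((z,\gamma)\) at some time \(t\). At the connection time the cluster has diameter at most \(Lr_n\), and every member has range at most \(r_nv_n\). Hence
\[
|x-y|\le Lr_n+2r_nv_n,
\]
and
\[
|z-y|\le r_nR(\gamma)+Lr_n+r_nv_n+|x-y|\le r_n\big(R(\gamma)+3v_n+2L\big).
\]
Because \(R(\gamma)>v_n\ge1\) for large \(n\), this is at most \((2L+4)r_nR(\gamma)=c_*r_nR(\gamma)\). All these distances are \(o(1)\) in the far case too, by lifting locally, so torus and Euclidean distances agree. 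Thus \(d_{\mathbb T}(z,Q_i)\le c_*r_nR(\gamma)\), that is, \(i\in\mathcal D_n\).
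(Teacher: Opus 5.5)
Your proposal is correct and is essentially the paper's argument: the same triangle-inequality geometry gives the influence claim, and the Poisson exponential formula applied to the capped block counts with a diverging tilt $\theta_n$ gives the probability bounds, with the cap $\rho_{k,n}$ and $\rho_{k,n}r_n^\alpha\log n\to0$ handling $\alpha<d$. The paper packages both cases into the single bound $g_n(u)\le C_*\kappa_nu^q$ for $u>v_n$, and it dominates $N_n^{\mathrm f}$ by the same sum instead of using your separate Chernoff bound, which is also fine.

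Your $\alpha<d$ case actually closes with no dyadic decomposition and without the polynomial sparsity assumption, which the paper does not use here. Your two estimates already give $\theta_nY\le C\theta_n+\tfrac{c_0}{2}R^\alpha$ for every $R$. With $\theta_n=\kappa\log m_n$, the factor $e^{C\theta_n}=m_n^{C\kappa}$ is absorbed by $e^{-c_0v_n^\alpha/2}=m_n^{-4}$, exactly as in your case $\alpha\ge d$.

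One small correction in the geometric part: work with $d_{\mathbb T}$ throughout rather than lifting, since $r_nR(\gamma)$ need not be small for a far trajectory.
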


\begin{proof}
We first verify that every block containing the first starting point of
an affected short cluster belongs to \(\mathcal D_n\). We then bound the
number of blocks influenced by one far trajectory and use this bound
to control both random quantities in the statement.

Let
\(\mathcal Y=\{\Phi_n(x_\ell,\gamma_\ell):1\le\ell\le k\}\)
be a connected short cluster, enumerated in the fixed Borel order.
Thus, \(R(\gamma_\ell)\le v_n\) for every \(\ell\), and the first
starting point is \(x_1=x\in Q_i\). Write
\(p_\ell(t):=\Phi_n(x_\ell,\gamma_\ell)(t)
=x_\ell+r_n\gamma_\ell(t)\), with addition on the torus.
For every \(t\in[0,1]\) and every \(\ell\),
\[
d_{\mathbb T}(x_\ell,p_\ell(t))
\le r_n|\gamma_\ell(t)|
\le r_nR(\gamma_\ell)
\le r_nv_n.
\]

By the definition of the connection indicator, the full diameter of
\(\mathcal Y\) is at most \(Lr_n\) at one common time.
More precisely, continuity of the paths and compactness of \([0,1]\)
show that the infimum in the definition of the connection indicator
is attained. Hence, there is a time \(t_0\in[0,1]\) such that
\(d_{\mathbb T}(p_\ell(t_0),p_j(t_0))\le Lr_n\)
for all \(\ell,j\). For any member of the cluster, with starting point
\(y=x_j\), the triangle inequality therefore gives
\[
\begin{aligned}
d_{\mathbb T}(x,y)
&\le
d_{\mathbb T}(x,p_1(t_0))
+d_{\mathbb T}(p_1(t_0),p_j(t_0))
+d_{\mathbb T}(p_j(t_0),y)\\
&\le r_nv_n+Lr_n+r_nv_n
=h_n^{\mathrm s}.
\end{aligned}
\]
Thus, this bound follows from the common-time diameter condition in
the definition of a connected cluster.

Now suppose that a far trajectory \((z,\gamma)\) approaches the
member starting at \(y=x_j\). Write
\(p_{\mathrm f}(t):=\Phi_n(z,\gamma)(t)\).
There is a time \(t_1\in[0,1]\) such that
\(d_{\mathbb T}(p_{\mathrm f}(t_1),p_j(t_1))\le Lr_n\).
The time \(t_1\) need not equal \(t_0\); the preceding estimate has
already bounded the distance between the starting points \(x\)
and \(y\). Since \(x\in Q_i\), another application of the triangle
inequality yields
\[
\begin{aligned}
d_{\mathbb T}(z,Q_i)
&\le d_{\mathbb T}(z,x)
\le
d_{\mathbb T}(z,p_{\mathrm f}(t_1))
+d_{\mathbb T}(p_{\mathrm f}(t_1),p_j(t_1))
+d_{\mathbb T}(p_j(t_1),y)
+d_{\mathbb T}(y,x)\\
&\le r_nR(\gamma)+Lr_n+r_nv_n+h_n^{\mathrm s}
=r_n\big(R(\gamma)+3v_n+2L\big).
\end{aligned}
\]
For all sufficiently large \(n\), we have \(v_n\ge1\).
Since the trajectory is far, \(R(\gamma)>v_n\), and consequently
\(3v_n\le3R(\gamma)\) and \(2L\le2LR(\gamma)\).
It follows that
\[
d_{\mathbb T}(z,Q_i)
\le (4+2L)r_nR(\gamma)
=c_*r_nR(\gamma).
\]
Thus, \(i\in\mathcal D_n(z,\gamma)\), and therefore
\(i\in\mathcal D_n\). The choice of sufficiently large \(n\)
depends only on the cutoff sequence, so the assertion holds
simultaneously for all short clusters and far trajectories.

We next give a uniform bound on the number of blocks within a
prescribed distance of a point. For \(z\in\Lambda\) and \(u\ge0\),
let
\(\mathcal J_n(z,u):=\{i:d_{\mathbb T}(z,Q_i)\le u\}\),
and write \(\overline B_{\mathbb T}(z,u)\) for the closed torus
ball of radius \(u\). Each block has volume \(a_n^d\) and diameter
at most \(\sqrt d\,a_n\). If \(i\in\mathcal J_n(z,u)\), compactness
of \(\overline{Q_i}\) gives a point \(w_i\in\overline{Q_i}\) with
\(d_{\mathbb T}(z,w_i)\le u\). Hence, every \(w\in Q_i\) satisfies
\(d_{\mathbb T}(z,w)\le u+\sqrt d\,a_n\), and
\[
\bigcup_{i\in\mathcal J_n(z,u)}Q_i
\subseteq
\overline B_{\mathbb T}(z,u+\sqrt d\,a_n).
\]

Let \(\omega_d\) denote the volume of the Euclidean unit ball.
A torus ball of radius \(r\) has volume at most \(\omega_dr^d\):
it is the image of a Euclidean ball under the quotient map, and
this projection cannot increase volume. This estimate remains
valid when the ball wraps around the torus. Since the blocks
form a partition, we obtain
\[
\begin{aligned}
|\mathcal J_n(z,u)|a_n^d
&=
\lambda_\Lambda\Big(
\bigcup_{i\in\mathcal J_n(z,u)}Q_i
\Big)
\le
\lambda_\Lambda\big(
\overline B_{\mathbb T}(z,u+\sqrt d\,a_n)
\big)
\le \omega_d(u+\sqrt d\,a_n)^d.
\end{aligned}
\]
Using
\((s+t)^d\le2^{d-1}(s^d+t^d)\) for \(s,t\ge0\),
we may therefore choose a constant \(C_d\ge1\), depending only
on \(d\), such that
\(
|\mathcal J_n(z,u)|
\le C_d\big(1+(u/a_n)^d\big).
\)
There are only \(\rho_{k,n}\) blocks in the partition, so we
also have \(|\mathcal J_n(z,u)|\le\rho_{k,n}\).
Both estimates hold uniformly in \(z\), \(u\), and all sufficiently
large \(n\).

By definition,
\(\mathcal D_n(z,\gamma)
=\mathcal J_n(z,c_*r_nR(\gamma))\).
Since \(b_n=r_n/a_n\), the preceding estimates give
\[
|\mathcal D_n(z,\gamma)|
\le
\min\big\{
\rho_{k,n},
C_d\big(1+c_*^d(b_nR(\gamma))^d\big)
\big\}.
\]
Choose
\(C_*\ge\max\{1,C_d,C_dc_*^d\}\), independently of \(n\),
and define
\[
g_n(u):=
\min\big\{\rho_{k,n},C_*\big(1+(b_nu)^d\big)\big\},
\qquad u\ge0.
\]
Then
\(|\mathcal D_n(z,\gamma)|\le g_n(R(\gamma))\).
Moreover, \(\rho_{k,n}\to\infty\) and \(C_*\ge1\), so
\(1\le g_n(u)\le\rho_{k,n}\) for every \(u\ge0\) and all
sufficiently large \(n\).

The next estimate compares the block count with the power of
the range appearing in the exponential moment assumption.
Recall that \(q=\min\{\alpha,d\}>0\), and put
\(\kappa_n:=v_n^{-q}+\rho_{k,n}r_n^q\).
The cutoff satisfies
\(v_n^q=8c_0^{-1}\log m_n\), where
\(m_n=s_n^{-(k-1)}\to\infty\). Consequently,
\(
v_n^{-q}=\frac{c_0}{8\log m_n}\longrightarrow0.
\)
If \(q=d\), then
\[
\rho_{k,n}r_n^q
=n^kr_n^{d(k-1)}r_n^d
=(nr_n^d)^k
=s_n^k
\longrightarrow0.
\]
If \(q=\alpha<d\), assumption
\eqref{eq:relaxed_tail_scale_condition} gives
\(\rho_{k,n}r_n^\alpha\log n\to0\).
Since \(\log n\ge1\) eventually,
\(0\le\rho_{k,n}r_n^\alpha
\le\rho_{k,n}r_n^\alpha\log n\to0\).
Thus, \(\kappa_n\to0\) in both cases.

We establish the bound on \(g_n(u)\) separately below and above
the range \(r_n^{-1}\). These two ranges cover every \(u>v_n\)
for all sufficiently large \(n\): indeed,
\(r_nv_n=a_nb_nv_n\to0\), by \(a_n\to0\) and
\eqref{eq:range-cutoff}, so \(v_n<r_n^{-1}\) eventually.
Suppose first that \(v_n<u\le r_n^{-1}\).
Then \(1\le v_n^{-q}u^q\).
Also, \(b_n^d=\rho_{k,n}r_n^d\), and \(q\le d\), so
\[
\begin{aligned}
(b_nu)^d
=\rho_{k,n}r_n^du^d
=\rho_{k,n}r_n^qu^q(r_nu)^{d-q}
\le\rho_{k,n}r_n^qu^q.
\end{aligned}
\]
The last inequality uses \(r_nu\le1\); when \(q=d\), the
factor \((r_nu)^{d-q}\) equals one. Therefore,
\[
\begin{aligned}
g_n(u)
&\le C_*\big(1+(b_nu)^d\big)
&\le C_*\big(v_n^{-q}+\rho_{k,n}r_n^q\big)u^q
&=C_*\kappa_nu^q.
\end{aligned}
\]

Suppose next that \(u>r_n^{-1}\).
Here we use the bound by the total number of blocks.
Since \((r_nu)^q>1\),
\[
g_n(u)
\le\rho_{k,n}
\le\rho_{k,n}r_n^qu^q
\le\kappa_nu^q
\le C_*\kappa_nu^q.
\]
Combining the two ranges gives the uniform estimate
\(
g_n(u)\le C_*\kappa_nu^q\) when
\( u>v_n.
\)
The bound \(g_n(u)\le\rho_{k,n}\) is essential in the second
range: it prevents the estimate from continuing to grow as
\(u^d\) when \(q<d\).
Define the nonnegative random sum
\[
S_n:=
\sum_{(z,\gamma)\in\widetilde{\mathcal P}_n^{\mathrm f}}
g_n(R(\gamma)).
\]
The far process contains finitely many points almost surely,
and \(g_n\le\rho_{k,n}\), so \(S_n<\infty\) almost surely.
For all sufficiently large \(n\), the lower bound \(g_n\ge1\)
gives
\(
N_n^{\mathrm f}
\le S_n.
\)
The definition of \(\mathcal D_n\) as a union gives, without
requiring the individual influence sets to be disjoint,
\[
\begin{aligned}
|\mathcal D_n|
=
\Big|
\bigcup_{(z,\gamma)\in\widetilde{\mathcal P}_n^{\mathrm f}}
\mathcal D_n(z,\gamma)
\Big|
\le
\sum_{(z,\gamma)\in\widetilde{\mathcal P}_n^{\mathrm f}}
|\mathcal D_n(z,\gamma)|
\le
\sum_{(z,\gamma)\in\widetilde{\mathcal P}_n^{\mathrm f}}
g_n(R(\gamma))
=S_n.
\end{aligned}
\]
These inequalities also hold when there are no far trajectories,
in which case the sums and the union are empty. Thus,
\(\max\{N_n^{\mathrm f},|\mathcal D_n|\}\le S_n\).
Set
\(\theta_n:=c_0/(2C_*\kappa_n)\).
Since \(\kappa_n>0\) and \(\kappa_n\to0\), we have
\(\theta_n>0\) and \(\theta_n\to\infty\).
For every \(u>v_n\), the uniform estimate on \(g_n\) yields
\(
\theta_ng_n(u)
\le
\frac{c_0}{2C_*\kappa_n}
C_*\kappa_nu^q
=\frac{c_0}{2}u^q.
\)
This choice allows the exponential parameter to diverge while
keeping the exponential contribution of each far mark below
the integrable range weight.

The intensity measure of
\(\widetilde{\mathcal P}_n^{\mathrm f}\) is
\(n\,dz\,\mathbf{1}\{R(\gamma)>v_n\}\Q(d\gamma)\)
on \(\Lambda\times\CC_0\). For each fixed \(n\), the function
\(\theta_ng_n(R(\gamma))\) is bounded by
\(\theta_n\rho_{k,n}\), and this intensity measure is finite.
Thus, the Poisson exponential formula
\cite{last-penrose2018} applies and gives
\[
\begin{aligned}
\log\mathbb E e^{\theta_nS_n}
&=
n\int_\Lambda\int_{\CC_0}
\big(e^{\theta_ng_n(R(\gamma))}-1\big)
\mathbf{1}\{R(\gamma)>v_n\}
\,\Q(d\gamma)\,dz\\
&=
n\mathbb E_\Q\big[
\big(e^{\theta_ng_n(R)}-1\big)
\mathbf{1}\{R>v_n\}
\big],
\end{aligned}
\]
where, inside the expectation, \(\Gamma\) has law \(\Q\) and
\(R=R(\Gamma)\). The second equality uses
\(\lambda_\Lambda(\Lambda)=1\) and the fact that the integrand
does not depend on \(z\).
On the event \(\{R>v_n\}\), we have
\[
e^{\theta_ng_n(R)}-1
\le e^{\theta_ng_n(R)}
\le e^{c_0R^q/2}
=e^{c_0R^q}e^{-c_0R^q/2}
\le e^{-c_0v_n^q/2}e^{c_0R^q}.
\]
Recall that
\(M_0=\mathbb E_\Q e^{c_0R(\Gamma)^q}<\infty\).
Taking expectations and using
\(v_n^q=8c_0^{-1}\log m_n\), we obtain
\[
\begin{aligned}
\mathbb E_\Q\big[
\big(e^{\theta_ng_n(R)}-1\big)
\mathbf{1}\{R>v_n\}
\big]
&\le
e^{-c_0v_n^q/2}
\mathbb E_\Q\big[
e^{c_0R^q}\mathbf{1}\{R>v_n\}
\big]\\
&\le M_0e^{-c_0v_n^q/2}\\
&=M_0e^{-4\log m_n}
=M_0m_n^{-4}.
\end{aligned}
\]
Since \(n/\rho_{k,n}=m_n\), division of the Poisson
exponential formula by \(\rho_{k,n}\) now gives
\[
\begin{aligned}
\frac1{\rho_{k,n}}\log\mathbb E e^{\theta_nS_n}
=
m_n\mathbb E_\Q\big[
\big(e^{\theta_ng_n(R)}-1\big)
\mathbf{1}\{R>v_n\}
\big]
\le M_0m_n\,m_n^{-4}
=M_0m_n^{-3}
\longrightarrow0.
\end{aligned}
\]

Finally, fix \(\delta>0\).
Since \(\theta_n>0\), exponential Markov inequality gives
\[
\begin{aligned}
\mathbb P(S_n\ge\delta\rho_{k,n})
&=
\mathbb P\big(
e^{\theta_nS_n}
\ge e^{\theta_n\delta\rho_{k,n}}
\big)
\le
e^{-\theta_n\delta\rho_{k,n}}
\mathbb E e^{\theta_nS_n}.
\end{aligned}
\]
Taking logarithms, with \(\log0=-\infty\), and dividing by
\(\rho_{k,n}\), we conclude that
\[
\frac1{\rho_{k,n}}
\log\mathbb P(S_n\ge\delta\rho_{k,n})
\le
-\delta\theta_n+M_0m_n^{-3}
\longrightarrow-\infty.
\]
Indeed, \(\delta\) is fixed and positive,
\(\theta_n\to\infty\), and \(m_n^{-3}\to0\).
The pointwise bounds by \(S_n\) give the event inclusions
\(
\{N_n^{\mathrm f}\ge\delta\rho_{k,n}\}
\subseteq
\{S_n\ge\delta\rho_{k,n}\},
\) and \(
\{|\mathcal D_n|\ge\delta\rho_{k,n}\}
\subseteq
\{S_n\ge\delta\rho_{k,n}\}.
\)
Each of the two normalized logarithmic probabilities in the
statement is therefore bounded above by a quantity tending
to \(-\infty\). This proves both probabilistic conclusions
for every \(\delta>0\).
\end{proof}

\section{Poisson approximation}
\label{sec:1proof}

We prove Proposition~\ref{pr:55} by coupling the contribution of each
block with a Poisson random measure. Convergence in one block makes a
coupling error unlikely, and an exponential-moment bound controls the
size of that error. Independence across blocks then gives exponential
equivalence of the averages. We state these two inputs and prove the
proposition before establishing the local approximation and the counting
estimate. Only sparsity and \eqref{eq:curve_moment_condition} are used
in this section.

For \(i\le\rho_{k,n}\), define
\[
\eta_{k,n}^{(i)}:=
\sum_{\substack{\mathcal Y\subseteq\PP_{Q_i}\\|\mathcal Y|=k}}
s_{\mathrm{conn},n}(\mathcal Y)
s_{\mathrm{iso},n}(\mathcal Y,\PP_{Q_i})
\delta_{r_n^{-1}\bar{\mathcal Y}}.
\]
Thus, \(\eta_{k,n}=\rho_{k,n}^{-1}\sum_i\eta_{k,n}^{(i)}\).
Recall that \(\zeta_{k,n}\) is the corresponding average of independent
Poisson random measures \(\zeta_k^{(i)}\) with intensity \(\tau_k\).
For all sufficiently large \(n\), the block side length \(a_n\) is
less than \(1/2\). The starting-point differences within one block
are then their ordinary Euclidean differences, so the centering rule
commutes with translations between blocks. Together with independence
of the restrictions of the marked Poisson process, this shows that the
block measures are independent and identically distributed.

To state the counting estimate, let \(\Theta\le\Theta_n\) be a finite
intensity measure on \(\CC_{\mathbb T}\), and let \(\Pi\) be a
Poisson process with intensity \(\Theta\). Let
\(h:\CC_{\mathbb T}^k\to\{0,1\}\) be symmetric and measurable,
vanishing unless the trajectories form a connected cluster. Write
\(\Pi_{\ne}^k\) for ordered tuples of distinct trajectories, and put
\[
N_h:=\frac1{k!}\sum_{\underline\gamma'\in\Pi_{\ne}^k}
h(\underline\gamma')s_{\mathrm{iso},n}(\{\underline\gamma'\},\Pi),
\qquad
\Lambda_h:=\frac1{k!}\int h\,d\Theta^{\otimes k}.
\]
Here \(\{\underline\gamma'\}\) denotes the unordered set of entries.
For an integer \(m\ge1\), write \((z)_m=z(z-1)\cdots(z-m+1)\),
with \((z)_0=1\), and let \(A\subseteq\Lambda\) be Borel, with
Lebesgue measure \(|A|\). The first input uses the disjointness of
isolated connected clusters to bound their factorial moments. Its
constant \(C\) depends only on \(d,k,L\) and the path moments in
\eqref{eq:curve_moment_condition}.

\begin{lemma}[Factorial moments of isolated clusters]
\label{lem:isolated-factorial}
For every integer \(m\ge1\) and every \(\theta>0\),
\[
\mathbb E[(N_h)_m]\le\Lambda_h^m,
\qquad
\mathbb E e^{\theta N_h}\le
\exp\{\Lambda_h(e^\theta-1)\}.
\]
If \(h\) vanishes whenever all starting points lie outside \(A\),
then \(\Lambda_h\le C\rho_{k,n}|A|\).
\end{lemma}

The second input compares a single block with its Poisson limit in
total variation. This provides a maximal coupling whose probability
of disagreement tends to zero; the preceding moment estimate will then
control the contribution on the disagreement event.

\begin{lemma}[Approximation in one cube]
\label{lem-equival}
If \(\rho_{k,n}\to\infty\), \(nr_n^d\to0\), and
\eqref{eq:curve_moment_condition} holds, then
\[
\sup_{i\le\rho_{k,n}}
d_{\mathrm{TV}}\big(\mathcal L(\eta_{k,n}^{(i)}),
\mathcal L(\zeta_k^{(i)})\big)\longrightarrow0.
\]
\end{lemma}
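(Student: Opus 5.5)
The plan is to fix one cube \(Q=Q_i\) of side \(a_n\) and prove that \(\eta^{(i)}_{k,n}\) is close in total variation to a Poisson random measure with its own intensity measure \(\nu_n:=\mathbb E\,\eta^{(i)}_{k,n}\). We then compare the latter with \(\mathcal L(\zeta_k^{(i)})\). For this second comparison we use the elementary bound
\[
d_{\mathrm{TV}}\big(\mathrm{Po}(\nu_n),\mathrm{Po}(\tau_k)\big)\le\|\nu_n-\tau_k\|_{\mathrm{TV}},
\]
which follows by superposing a common Poisson part with intensity \(\nu_n\wedge\tau_k\). By translation invariance of the marked Poisson process on the torus, and because the centering rule commutes with translations once \(a_n<1/2\), every bound below is uniform in \(i\). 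The proof therefore has two steps: convergence of intensities, and Poisson process approximation.

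\emph{Step 1: intensities.} We apply the multivariate Mecke formula to \(\nu_n(A)\) and substitute \(x_j=x_1+r_ny_j\) for \(j=2,\ldots,k\). The Jacobian \(r_n^{d(k-1)}\), the factor \(n^k\) and the volume \(a_n^d=\rho_{k,n}^{-1}\) of the cube combine to the constant \(1\). This leaves the normalized average over \(x_1\in Q\) of
\[
\frac1{k!}\mathbb E\Big[\int \one\{\diam(\Gamma_0,y_1+\Gamma_1,\dots)\le L\}\,\one\{\operatorname{cen}(\cdots)\in A\}\,\one\{x_1+r_ny_j\in Q\ \forall j\}\,p_n(x_1,y,\Gamma)\,dy\Big].
\]
Here \(p_n\) is the probability that no other particle of \(\PP_{Q}\) comes within \(Lr_n\) of the cluster. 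By the Poisson void probability, \(1-p_n\le n\,\mathbb E\,\lambda(\text{tube})\le C nr_n^d(1+\max_jR(\Gamma_j)+R(\Gamma'))^d\), which tends to zero for fixed paths because \(nr_n^d\to0\). The constraint \(x_1+r_ny_j\in Q\) fails only when \(x_1\) lies within distance \(r_n|y|\) of \(\partial Q\). On the connection event we have \(|y|\le C(L+\max_jR(\Gamma_j))\), so this happens for a fraction at most \(Cb_n(1+\max_j R(\Gamma_j))\to0\) of the cube. Dominated convergence is justified by the integrable envelope \(C(1+\max_jR(\Gamma_j))^{d(k-1)}\) from \eqref{eq:curve_moment_condition}. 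It gives \(\sup_A|\nu_n(A)-\tau_k(A)|\to0\), uniformly in \(A\) because the bound on the error never depends on \(A\). Hence \(\|\nu_n-\tau_k\|_{\mathrm{TV}}\to0\).

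\emph{Step 2: Poisson process approximation.} This is the main obstacle. I would use a Stein/Palm bound for point processes (Barbour--Brown, or the Poisson-functional version of Decreusefond--Schulte--Thäle). It bounds \(d_{\mathrm{TV}}(\mathcal L(\eta^{(i)}_{k,n}),\mathrm{Po}(\nu_n))\) by \(\int \mathbb E\|\eta^{\mathcal Y}-\eta-\delta_{\operatorname{sh}(\mathcal Y)}\|_{\mathrm{TV}}\) integrated against the Mecke intensity of connected isolated \(k\)-tuples. Here \(\eta^{\mathcal Y}\) denotes the block cluster measure of \(\PP_Q\cup\mathcal Y\), which realizes the reduced Palm version by Mecke's formula. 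Adding a cluster \(\mathcal Y\) changes \(\eta\) in only two ways. First, the added \(k\)-tuple may fail to be isolated, which requires a particle within the tube around \(\mathcal Y\). Second, existing clusters may lose isolation or new clusters may form, which requires some trajectory of \(\PP_Q\) to come within \(Lr_n\) of \(\mathcal Y\). In both cases the number of affected clusters is at most the number of particles in a tube of volume \(\le Cr_n^d(1+R)^d\) around \(\mathcal Y\). Its expected size is \(O(nr_n^d)\) after integrating path moments, and the factorial-moment bound of Lemma~\ref{lem:isolated-factorial} controls the number of affected isolated clusters. Integrating against \(\nu_n\), whose total mass is bounded by Lemma~\ref{lem:isolated-factorial} applied with \(A=Q\), gives an error \(O(nr_n^d)\to0\).

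If a clean off-the-shelf Palm bound is unavailable in the needed form, my fallback is a two-stage argument. I would first show that the probability of any pair of connected \(k\)-tuples sharing a particle, or of any connected \((k+1)\)-configuration, tends to zero. This is again a Mecke computation with an extra factor \(nr_n^d\). I would then approximate the resulting dissociated sum of indicators by Chen--Stein for processes. Finally, Steps 1 and 2 combine through the triangle inequality, and uniformity in \(i\) follows since all blocks are equal in law.
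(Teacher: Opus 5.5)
Your argument is correct, but it is organized differently from the paper's.

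\textbf{The paper's route.} It first strips isolation, bounding \(\mathbb P(\eta_{k,n}^{(i)}\ne\widetilde\eta_{k,n}^{(i)})\) by \(I_{k+1,n}(Q_i)/k!\le Cs_n\). Only then does it apply the Decreusefond--Schulte--Th\"ale bound to the connected-tuple process \(\widetilde\eta_{k,n}^{(i)}\). That theorem concerns Poisson U-statistics, whose summands depend only on the tuple. This is why the paper removes isolation first, and why your parenthetical DST alternative would not apply to \(\eta_{k,n}^{(i)}\) directly. The theorem charges the intensity term \(\|\alpha_{i,n}-\tau_k\|_{\mathrm{TV}}\) plus the overlap integrals \(\mathfrak r_{i,n,p}\le Cs_n^{k-p}\) for pairs of connected tuples sharing \(p\) trajectories.

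\textbf{Your route.} You keep isolation, absorb the void probability of the \(Lr_n\)-tube into the intensity comparison, and use a Barbour--Brown Palm/Stein bound. This is, if anything, leaner. On the event that the tube around a connected \(\mathcal Y\) is empty, no trajectory of \(\PP_{Q_i}\) comes within \(Lr_n\) of \(\mathcal Y\). Hence old clusters keep their status, no mixed tuple is connected, and \(\mathcal Y\) is isolated. Inserting \(\mathcal Y\) into \(\PP_{Q_i}\) therefore produces exactly \(\eta_{k,n}^{(i)}+\delta_{\operatorname{sh}(\mathcal Y)}\). No overlap integrals appear, and the whole Stein error is again \(O(I_{k+1,n}(Q_i))=O(s_n)\).

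\textbf{One point to state correctly.} The reduced Palm law is not that of \(\eta^{\mathcal Y}\). It is the law of \(\eta^{\mathcal Y}\) weighted by the isolation indicator \(V_{\mathcal Y}\) of \(\mathcal Y\) in \(\PP_{Q_i}\cup\mathcal Y\), which is the tube void indicator. If you write the Stein identity through the Mecke formula, the contribution of a connected \(\mathcal Y\) is \(-\mathrm{Cov}\big(V_{\mathcal Y},\Delta_{\operatorname{sh}(\mathcal Y)}h(\eta_{k,n}^{(i)})\big)\). This is at most \(2\|\Delta h\|_\infty\,\Theta_n(\text{tube})\), and \(\|\Delta h\|_\infty\le1\) for the total-variation Stein solution. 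So Step~2 needs only the first-difference bound on \(h\), and the factorial-moment lemma is not required there.
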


\begin{proof}[Proof of Proposition~\ref{pr:55}]
For each block, choose a maximal coupling
\((\widehat\eta_{k,n}^{(i)},\widehat\zeta_k^{(i)})\), and take the
product of these coupling laws over the blocks. Write
\(\widehat{\mathbb P}\) and \(\widehat{\mathbb E}\) for probability
and expectation on this product space. Each marginal has the required
block law, and the pairs are independent. Put
\(D_{i,n}:=d_{\mathrm{TV}}(\widehat\eta_{k,n}^{(i)},
\widehat\zeta_k^{(i)})\) and
\(\epsilon_n:=\sup_i\widehat{\mathbb P}(D_{i,n}>0)\).
Maximality and Lemma~\ref{lem-equival} give
\(\epsilon_n=\sup_i d_{\mathrm{TV}}(\mathcal L(\eta_{k,n}^{(i)}),
\mathcal L(\zeta_k^{(i)}))\to0\).

Define \(\widehat\eta_{k,n}:=\rho_{k,n}^{-1}
\sum_i\widehat\eta_{k,n}^{(i)}\) and
\(\widehat\zeta_{k,n}:=\rho_{k,n}^{-1}
\sum_i\widehat\zeta_k^{(i)}\). Independence of the block marginals
shows that these averages have the laws of \(\eta_{k,n}\) and
\(\zeta_{k,n}\), respectively. Homogeneity and subadditivity of total
variation give
\[
\begin{aligned}
d_{\mathrm{TV}}(\widehat\eta_{k,n},\widehat\zeta_{k,n})
&=\Big\|\frac1{\rho_{k,n}}
\sum_i(\widehat\eta_{k,n}^{(i)}-\widehat\zeta_k^{(i)})\Big\|_{\mathrm{TV}}
\le\frac1{\rho_{k,n}}
\sum_i\|\widehat\eta_{k,n}^{(i)}-\widehat\zeta_k^{(i)}\|_{\mathrm{TV}}
=\frac1{\rho_{k,n}}\sum_iD_{i,n}.
\end{aligned}
\]

We next bound the exponential moment of each summand. In
Lemma~\ref{lem:isolated-factorial}, take
\(\Theta=\Theta_n|_{\CC_{\mathbb T,Q_i}}\) and let \(h\) be the
connection indicator. Since \(\rho_{k,n}|Q_i|=1\), this gives
\(\sup_{n,i}\mathbb E e^{t\eta_{k,n}^{(i)}(E_k)}<\infty\)
for each \(t>0\), with the supremum over sufficiently large \(n\).
Also, \(\mathbb E e^{t\zeta_k^{(i)}(E_k)}
=\exp\{\tau_k(E_k)(e^t-1)\}<\infty\).
Although the two measures within a maximal coupling need not be
independent, Cauchy--Schwarz applies to their joint law. Since
\(D_{i,n}\le\widehat\eta_{k,n}^{(i)}(E_k)
+\widehat\zeta_k^{(i)}(E_k)\), for every \(a>0\) we obtain
\[
\begin{aligned}
\widehat{\mathbb E}[(e^{aD_{i,n}}-1)
\mathbf{1}\{D_{i,n}>0\}]
\le\sqrt{\widehat{\mathbb E}e^{2aD_{i,n}}}
\sqrt{\widehat{\mathbb P}(D_{i,n}>0)}
\le\big(\mathbb E e^{4a\eta_{k,n}^{(i)}(E_k)}\big)^{1/4}
\big(\mathbb E e^{4a\zeta_k^{(i)}(E_k)}\big)^{1/4}
\epsilon_n^{1/2},
\end{aligned}
\]
which is at most  \(C_a\epsilon_n^{1/2}\).
The constant \(C_a\) is independent of \(n\) and \(i\). Thus,
\(\sup_i\widehat{\mathbb E}e^{aD_{i,n}}\le1+C_a\epsilon_n^{1/2}\to1\),
which supplies the moment convergence needed to sum the errors.

For \(\delta>0\), exponential Markov inequality and independence of
the pairs now yield
\[
\begin{aligned}
\frac1{\rho_{k,n}}\log\widehat{\mathbb P}
\big(d_{\mathrm{TV}}(\widehat\eta_{k,n},\widehat\zeta_{k,n})>\delta\big)
&\le\frac1{\rho_{k,n}}\log\widehat{\mathbb P}
\Big(\sum_iD_{i,n}>\delta\rho_{k,n}\Big)\\
&\le-a\delta+\frac1{\rho_{k,n}}
\sum_i\log\widehat{\mathbb E}e^{aD_{i,n}}\\
&\le-a\delta+\log(1+C_a\epsilon_n^{1/2}).
\end{aligned}
\]
For each fixed \(a\), taking the upper limit in \(n\) gives a bound
of \(-a\delta\). Letting \(a\to\infty\) gives \(-\infty\), for
every \(\delta>0\). The coupled averages therefore establish the
exponential equivalence asserted in Proposition~\ref{pr:55}.
\end{proof}

\subsection{Approximation in one cube}
\label{ssec:equival}

We prove Lemma~\ref{lem-equival} by first removing isolation and then
approximating the resulting point process of connected tuples. The first
error involves a cluster and one additional blocking trajectory, while
the second involves two clusters with shared trajectories. A common
geometric integral bounds both errors, and a separate intensity
calculation identifies the limiting cluster shapes. We give the precise
inputs to the Poisson approximation theorem before proving that each
error vanishes.

Fix a block index \(i\), and put
\(X_{i,n}:=\CC_{\mathbb T,Q_i}\) and
\(K_{i,n}:=\Theta_n|_{X_{i,n}}\).
Let \(D_{i,n}^{\mathrm{conn}}\subseteq X_{i,n}^k\) be the symmetric
measurable set on which the connection indicator is one, with the
diagonals of repeated trajectories omitted. On this set define
\(f_{i,n}(\underline\gamma'):=r_n^{-1}\overline{\{\underline\gamma'\}}\).
The point process obtained by omitting isolation is
\[
\widetilde\eta_{k,n}^{(i)}
:=\frac1{k!}\sum_{\underline\gamma'\in(\PP_{Q_i})_{\ne}^k\cap D_{i,n}^{\mathrm{conn}}}
\delta_{f_{i,n}(\underline\gamma')},
\qquad
\alpha_{i,n}:=\mathbb E\widetilde\eta_{k,n}^{(i)}.
\]
The summands are restricted to \(D_{i,n}^{\mathrm{conn}}\), where
\(f_{i,n}\) is defined. The factor \(1/k!\) counts each unordered
tuple once. Starting positions have a diffuse intensity, so the
diagonals have zero product intensity and do not affect the integrals
below.
For \(1\le p\le k-1\), define the overlap integral
\[
\mathfrak r_{i,n,p}:=
\int_{X_{i,n}^{p}}
\Big(\int_{X_{i,n}^{k-p}}
\mathbf{1}\{(\gamma'_1,\ldots,\gamma'_k)
\in D_{i,n}^{\mathrm{conn}}\}
\,dK_{i,n}^{\otimes(k-p)}\Big)^2
\,dK_{i,n}^{\otimes p},
\qquad
\mathfrak r_{i,n}:=\max_{1\le p\le k-1}\mathfrak r_{i,n,p}.
\]
The inner integral keeps the first \(p\) trajectories fixed. Squaring
it introduces two separate choices of the other \(k-p\) trajectories,
so the expanded integral contains \(2k-p\) trajectories in total.
Write \(d_{\mathrm{KR}}\) for the Kantorovich--Rubinstein distance
on laws of finite point measures whose test functions are measurable
functions \(F\) satisfying
\(|F(\nu)-F(\nu')|\le\sup_A|\nu(A)-\nu'(A)|\),
where \(A\) ranges over Borel subsets of \(E_k\).
The Poisson case of \cite[Theorem~3.1]{DecreusefondSchulteThaele2016}
gives, for the symmetric measurable tuple map above and finite
intensities,
\begin{equation}
d_{\mathrm{KR}}\big(\mathcal L(\widetilde\eta_{k,n}^{(i)}),
\mathcal L(\zeta_k^{(i)})\big)
\le\|\alpha_{i,n}-\tau_k\|_{\mathrm{TV}}
+\frac{2^{k+1}}{k!}\mathfrak r_{i,n}.
\label{eq:local-poisson-approximation-bound}
\end{equation}
Here we use the total variation norm fixed in
Section~\ref{sec:proof-outline}. The intensity term in the cited
bound is \(\sup_A|\alpha_{i,n}(A)-\tau_k(A)|\), which is at
most this variation norm.

For the common geometric estimate, consider  trajectories
\(\gamma'_\ell=x_\ell+r_n\gamma_\ell\) on the torus, with
\(\gamma_\ell\in\CC_0\), and put
\(R_j:=\max_{1\le\ell\le j}R(\gamma_\ell)\).
Join two indices if their trajectories approach within \(Lr_n\) at
some time, and let \(c_{j,n}^{\mathrm{gr}}\) be the indicator that
this graph is connected. This graph condition is used only as an upper
bound; the cluster condition still requires the full diameter to be
small at one common time. For a Borel set \(A\subseteq\Lambda\), put
\(I_{j,n}(A):=\int\mathbf{1}\{\gamma'_1(0)\in A\}
 c_{j,n}^{\mathrm{gr}}(\underline\gamma')\,
 d\Theta_n^{\otimes j}(\underline\gamma')\).
All graph indicators are measurable, since the infimum of the distance
between two continuous paths on the compact time interval is a
continuous function of those paths.

\begin{proof}[Proof of Lemma~\ref{lem-equival}]
We first establish the bound on \(I_{j,n}(A)\), for
\(2\le j\le2k-1\). If \(\ell\) and \(m\) are adjacent, the
triangle inequality at a time of approach gives
\(d_{\mathbb T}(x_\ell,x_m)
\le r_n(R(\gamma_\ell)+L+R(\gamma_m))
\le r_n(L+2R_j)\).
When the graph is connected, a path of at most \(j-1\) edges joins
index \(1\) to each other index. Hence, every starting point belongs
to the torus ball about \(x_1\) of radius
\((j-1)r_n(L+2R_j)\). Write \(B_{\mathbb T}(x,u)\) for that
ball with center \(x\) and radius \(u\). Its volume is at most
\(C_du^d\) for every \(u>0\): projection of the Euclidean ball
onto the torus cannot increase volume. Integrating the starting points
and then the displacement paths gives
\begin{equation}
\begin{aligned}
I_{j,n}(A)
&\le n^j\int_{\CC_0^j}\int_A
\prod_{\ell=2}^j
|B_{\mathbb T}(x_1,(j-1)r_n(L+2R_j))|
\,dx_1\,\Q^{\otimes j}(d\underline\gamma)\\
&\le C_jn^j|A|r_n^{d(j-1)}
\mathbb E_{\Q^{\otimes j}}(1+R_j)^{d(j-1)}
\le C_jn^j|A|r_n^{d(j-1)}.
\end{aligned}
\label{eq:connected-pattern-integral}
\end{equation}
The last moment is finite because
\((1+R_j)^u\le\sum_{\ell=1}^j(1+R(\gamma_\ell))^u\) for \(u\ge0\),
and \eqref{eq:curve_moment_condition} applies. In particular, for
\(A=Q_i\), the deterministic factor equals
\(n^jr_n^{d(j-1)}/\rho_{k,n}
=n^{j-k}r_n^{d(j-k)}=s_n^{j-k}\).
This identity is the source of the powers of the sparsity parameter
in both error estimates.
Define \(\chi_n(\underline\gamma',\gamma')\) to be the indicator that
\(\gamma'\) approaches at least one member of
\(\{\underline\gamma'\}\) within \(Lr_n\).
If \(\widetilde\eta_{k,n}^{(i)}\ne\eta_{k,n}^{(i)}\), some
connected \(k\)-tuple has such an additional trajectory in
\(\PP_{Q_i}\). Counting all these choices, rather than only the
first blocker, bounds the probability of a discrepancy. The
multivariate Mecke formula \cite{last-penrose2018} gives
\[
\begin{aligned}
d_{\mathrm{TV}}\big(\mathcal L(\eta_{k,n}^{(i)}),
\mathcal L(\widetilde\eta_{k,n}^{(i)})\big)
&\quad\le\mathbb P(\eta_{k,n}^{(i)}\ne\widetilde\eta_{k,n}^{(i)})\\
&\quad\le\frac1{k!}\mathbb E
\sum_{(\gamma'_1,\ldots,\gamma'_{k+1})\in(\PP_{Q_i})_{\ne}^{k+1}}
s_{\mathrm{conn},n}(\{\gamma'_1,\ldots,\gamma'_k\})
\chi_n((\gamma'_1,\ldots,\gamma'_k),\gamma'_{k+1})\\
&\quad=\frac1{k!}\int_{X_{i,n}^{k+1}}
s_{\mathrm{conn},n}(\{\gamma'_1,\ldots,\gamma'_k\})
\chi_n((\gamma'_1,\ldots,\gamma'_k),\gamma'_{k+1})
\,dK_{i,n}^{\otimes(k+1)}\\
&\quad\le\frac1{k!}I_{k+1,n}(Q_i)
\le Cs_n\longrightarrow0.
\end{aligned}
\]
For the penultimate inequality, the first \(k\) trajectories form a
complete proximity graph, and the blocker has an edge to that graph.
Their union is therefore connected. Dropping the block restrictions
on all starting points except the first only increases the integral,
so \eqref{eq:connected-pattern-integral} applies.

We next bound the overlap integral in
\eqref{eq:local-poisson-approximation-bound}. Fix
\(1\le p\le k-1\) and expand the square in its definition by
Tonelli's theorem. The two \(k\)-tuples share the first \(p\)
trajectories; each tuple has a complete proximity graph, although their
connection times need not agree. Since \(p\ge1\), the union graph
is connected. Consequently,
\[
\begin{aligned}
\mathfrak r_{i,n,p}
&=\int_{X_{i,n}^{2k-p}}
 s_{\mathrm{conn},n}(\{\gamma'_1,\ldots,\gamma'_k\})
 s_{\mathrm{conn},n}(\{\gamma'_1,\ldots,\gamma'_p,
 \gamma'_{k+1},\ldots,\gamma'_{2k-p}\})
 \,dK_{i,n}^{\otimes(2k-p)}\\
&\le I_{2k-p,n}(Q_i)
\le C\frac{n^{2k-p}r_n^{d(2k-p-1)}}{n^kr_n^{d(k-1)}}
=C(nr_n^d)^{k-p}\longrightarrow0.
\end{aligned}
\]
There are only \(k-1\) choices of \(p\), so
\(\mathfrak r_{i,n}\to0\), uniformly in the block index.
Also, Mecke's formula and \eqref{eq:connected-pattern-integral} with
\(j=k\) give \(\alpha_{i,n}(E_k)\le I_{k,n}(Q_i)/k!\le C\).
The intensity \(\tau_k\) is finite by Section~\ref{sec:model}.
Thus, both intensities in the stated Poisson approximation bound are
finite, and the symmetry and measurability hypotheses hold by the
construction of \(D_{i,n}^{\mathrm{conn}}\) and \(f_{i,n}\).

It remains to compare the two intensities. For
\(y=(y_1,\ldots,y_{k-1})\in(\mathbb R^d)^{k-1}\), set \(y_0=0\)
and define
\(\psi(\underline\gamma,y):=
\operatorname{cen}(\gamma_1,y_1+\gamma_2,\ldots,y_{k-1}+\gamma_k)\).
Let \(c(\underline\gamma,y)\) indicate that these 
trajectories have full diameter at most \(L\) at one common time.
Let \(c_n(\underline\gamma,y)\) indicate the same condition on the
torus after multiplication by \(r_n\), at threshold \(Lr_n\).
For a cube represented in \([0,1]^d\), define
\(p_{i,n}(y):=\rho_{k,n}|Q_i\cap
\bigcap_{j=1}^{k-1}(Q_i-r_ny_j)|\), using translations.
Finally, write \(\lambda:=dy\,\Q^{\otimes k}(d\underline\gamma)\)
for the product measure on offsets and displacement paths.

For a Borel set \(A\subseteq E_k\), Mecke's formula expresses
\(\alpha_{i,n}(A)\) as the integral of the connection and shape
indicators against \(n^k\,dx_1\cdots dx_k\,
\Q^{\otimes k}(d\underline\gamma)/k!\), with every \(x_j\in Q_i\).
Make the change of variables \(x_1=x\) and
\(x_{j+1}=x+r_ny_j\). Its Jacobian is \(r_n^{d(k-1)}\), and
\(n^kr_n^{d(k-1)}=\rho_{k,n}\). On this domain the rescaled
centered shape is exactly \(\psi(\underline\gamma,y)\). Integrating
first in the anchor \(x\) gives
\[
\begin{aligned}
\alpha_{i,n}(A)
&=\frac{\rho_{k,n}}{k!}\int
 c_n(\underline\gamma,y)\mathbf{1}\{\psi(\underline\gamma,y)\in A\}
 \Big(\int_{Q_i}\prod_{j=1}^{k-1}
 \mathbf{1}\{x+r_ny_j\in Q_i\}\,dx\Big)\,d\lambda\\
&=\frac1{k!}\int p_{i,n}(y)c_n(\underline\gamma,y)
 \mathbf{1}\{\psi(\underline\gamma,y)\in A\}\,d\lambda,\\
\tau_k(A)
&=\frac1{k!}\int c(\underline\gamma,y)
 \mathbf{1}\{\psi(\underline\gamma,y)\in A\}\,d\lambda.
\end{aligned}
\]
For fixed \(y\), put \(M_y:=\max_j|y_j|\). If an anchor \(x\)
lies at distance greater than \(r_nM_y\) from the boundary of \(Q_i\),
then every \(x+r_ny_j\) lies in \(Q_i\). The missing anchors are
therefore contained in an inner boundary layer of that width. The
volume of such a layer in a cube of side \(a_n\) is at most
\(2d\min\{a_n^d,r_nM_ya_n^{d-1}\}\), whence
\(1-p_{i,n}(y)\le2d\min\{1,b_nM_y\}\to0\), uniformly in
\(i\). Both bounds are independent of the location of the block.

To dominate the integrand uniformly, put \(C_L:=L+2R_k\).
If \(c=1\), comparison with the first trajectory at a connection
time gives \(|y_j|\le C_L\) for every \(j\). If
\(p_{i,n}c_n>0\), choose an anchor for which all starting points
belong to the same cube. The triangle inequality on the torus gives
\(d_{\mathbb T}(x,x+r_ny_j)\le r_nC_L\).
Because the two starting points are in one cube of side less than
\(1/2\), their difference is the principal torus difference, and
\(d_{\mathbb T}(x,x+r_ny_j)=r_n|y_j|\). Thus, the same bound on
\(y_j\) holds in both cases.

It follows that \(|p_{i,n}c_n-c|\) is bounded by the indicator of
\(\max_j|y_j|\le C_L\). Its integral is at most
\(C\mathbb E_{\Q^{\otimes k}}(1+R_k)^{d(k-1)}<\infty\).
Total variation decreases under a measurable pushforward. Applying
this to the two intensity formulas and using the uniform pointwise
convergence just established yields
\[
\begin{aligned}
\sup_{i\le\rho_{k,n}}\|\alpha_{i,n}-\tau_k\|_{\mathrm{TV}}
&\le\frac1{k!}\int
 \sup_{i\le\rho_{k,n}}
 |p_{i,n}(y)c_n(\underline\gamma,y)-c(\underline\gamma,y)|
 \,d\lambda
\longrightarrow0.
\end{aligned}
\]
The limit follows from dominated convergence with the integrable
indicator above. In particular, the supremum over measurable sets
of shapes is controlled before taking the limit.

Finally, an indicator of a measurable set of point measures is
1-Lipschitz for total variation: its values differ by at most one,
and for two unequal integer-valued point measures
\(\sup_A|\nu(A)-\nu'(A)|\ge1\). Thus, the total variation distance between
laws is at most their \(d_{\mathrm{KR}}\) distance. The triangle
inequality, the isolation bound, and
\eqref{eq:local-poisson-approximation-bound} now give
\[
\begin{aligned}
d_{\mathrm{TV}}\big(\mathcal L(\eta_{k,n}^{(i)}),
\mathcal L(\zeta_k^{(i)})\big)
&\le Cs_n+\|\alpha_{i,n}-\tau_k\|_{\mathrm{TV}}
 +\frac{2^{k+1}}{k!}\mathfrak r_{i,n}
\longrightarrow0.
\end{aligned}
\]
The isolation term vanishes by \(s_n\to0\), the intensity term by
the preceding dominated-convergence argument, and the overlap term
by the estimates for \(1\le p\le k-1\). Each bound is uniform in
\(i\), which proves the stated local approximation.
\end{proof}

\subsection{Factorial moments of isolated clusters}
\label{ssec:ui}

We now prove the counting estimate used in the coupling argument.
Isolation prevents two distinct counted clusters from sharing a
trajectory, which permits one application of the multivariate Mecke
formula to their joint factorial moments. Expanding the exponential
in factorial moments then gives the required bound. Choosing a starting
point in a prescribed set as anchor makes the same estimate available
for blocks, the whole torus, and boundary layers.

\begin{proof}[Proof of Lemma~\ref{lem:isolated-factorial}]
Suppose that two distinct counted clusters \(\mathcal Y\) and
\(\mathcal Z\) share a trajectory. Since both have size \(k\),
there is a trajectory in \(\mathcal Y\setminus\mathcal Z\).
At a time when \(\mathcal Y\) has diameter at most \(Lr_n\), this
trajectory is within \(Lr_n\) of the shared trajectory. It is an
outside trajectory for \(\mathcal Z\), contradicting isolation of
\(\mathcal Z\). Therefore, distinct counted clusters are disjoint.

The falling factorial \((N_h)_m\) counts ordered lists of \(m\)
distinct counted clusters. Every nonzero term consequently uses
\(km\) distinct trajectories. Ordering the entries within each
cluster introduces the factor \((k!)^m\). Mecke's formula and the
bound of one on every isolation indicator give
\[
\begin{aligned}
\mathbb E[(N_h)_m]
\le\frac1{(k!)^m}\int
 \prod_{j=1}^m
 h(\gamma'_{(j-1)k+1},\ldots,\gamma'_{jk})
 \,d\Theta^{\otimes km}
=\prod_{j=1}^m
 \Big(\frac1{k!}\int h\,d\Theta^{\otimes k}\Big)
=\Lambda_h^m.
\end{aligned}
\]
The equality holds because the remaining integrand factors over
\(m\) disjoint groups of integration variables. This is a bound on
joint factorial moments and does not assert independence of the
cluster counts themselves.
For every nonnegative integer \(z\), the binomial formula gives
\(e^{\theta z}=(1+(e^\theta-1))^z
=\sum_{m=0}^{\infty}(z)_m(e^\theta-1)^m/m!\); the summands vanish
for \(m>z\). Since \(\theta>0\), they are nonnegative, so monotone
convergence and the factorial-moment bound yield
\[
\mathbb E e^{\theta N_h}
=\sum_{m=0}^{\infty}\frac{(e^\theta-1)^m}{m!}
 \mathbb E[(N_h)_m]
\le\sum_{m=0}^{\infty}\frac{(\Lambda_h(e^\theta-1))^m}{m!}
=\exp\{\Lambda_h(e^\theta-1)\}.
\]

For the spatial assertion, the condition on \(h\) bounds it by the
connection indicator times
\(\sum_{\ell=1}^k\mathbf{1}\{\gamma'_\ell(0)\in A\}\).
Use \(\Theta\le\Theta_n\) and symmetry to choose the first starting
point as anchor, at the cost of a factor \(k\). A connected cluster
has a connected proximity graph, so the geometric integral already
proved in \eqref{eq:connected-pattern-integral} gives
\[
\begin{aligned}
\Lambda_h
\le\frac{k}{k!}\int
 \mathbf{1}\{\gamma'_1(0)\in A\}
 s_{\mathrm{conn},n}(\{\gamma'_1,\ldots,\gamma'_k\})
 \,d\Theta_n^{\otimes k}
\le\frac{k}{k!}I_{k,n}(A)
\le Cn^k|A|r_n^{d(k-1)}
=C\rho_{k,n}|A|.
\end{aligned}
\]
This proves the spatial bound and completes all parts of the lemma.
\end{proof}

\section{Spatial localization}
\label{sec:2proof}

We prove Proposition~\ref{pr:56} by separating the effect of far
trajectories from the localization errors of the short process.
The first comparison uses the affected blocks from
Lemma~\ref{lem:far-influence}, and the second uses the deterministic
range bound inside enlarged cubes. We state both inputs and deduce the
proposition before proving them. The short-process error estimates will
also supply the exact equality needed for the hard-core lower bound.

Throughout this section, assume the hypotheses of Theorem~\ref{thm:fine}.
Let \(\xi_{k,n}^{\mathrm s}\) and \(\eta_{k,n}^{\mathrm s}\) be
defined as \(\xi_{k,n}\) and \(\eta_{k,n}\), with \(\PP_n\)
replaced by \(\PP_n^{\mathrm s}\). For each block \(Q_i\), let
\(Z_{i,n}^{\mathrm{iso},+,\mathrm s}\) count connected short clusters
whose starting points all lie in \(Q_i\), which are isolated relative
to \(\PP_{Q_i}^{\mathrm s}\), and which are approached within
\(Lr_n\) by a short trajectory starting in
\(Q_i^{\mathrm s,+}\setminus Q_i\). Let
\(\widetilde Z_{i,n}^{\mathrm s}\) count connected clusters that are
isolated relative to \(\PP_{Q_i^{\mathrm s,+}}^{\mathrm s}\), have
all starting points in \(Q_i^{\mathrm s,+}\), and have starting
points in both \(Q_i\) and \(Q_i^{\mathrm s,+}\setminus Q_i\).
Put \(E_{i,n}:=Z_{i,n}^{\mathrm{iso},+,\mathrm s}
+\widetilde Z_{i,n}^{\mathrm s}\). Each \(E_{i,n}\) is determined by
the short trajectories starting in \(Q_i^{\mathrm s,+}\).

The first input controls the changes caused by deleting far trajectories.
It applies to the global and blockwise measures on their original common
probability space.

\begin{lemma}[Removal of trajectories with large range]
\label{lem:remove-far}
For every \(\delta>0\),
\[
\lim_{n\to\infty}\frac1{\rho_{k,n}}
\log\mathbb P\big(
d_{\mathrm{TV}}(\xi_{k,n},\xi_{k,n}^{\mathrm s})
+d_{\mathrm{TV}}(\eta_{k,n},\eta_{k,n}^{\mathrm s})>\delta
\big)=-\infty.
\]
\end{lemma}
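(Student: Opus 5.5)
Since both measures are normalized by \(\rho_{k,n}\), the plan is to bound the unnormalized differences \(\rho_{k,n}d_{\mathrm{TV}}(\xi_{k,n},\xi_{k,n}^{\mathrm s})\) and \(\rho_{k,n}d_{\mathrm{TV}}(\eta_{k,n},\eta_{k,n}^{\mathrm s})\) by a random count. That count will be the number of isolated connected clusters, in either the full or the short process, that are ``affected'' by far trajectories. We then show this count exceeds \(\delta\rho_{k,n}\) only with superexponentially small probability. The discrepancy has two sources. \emph{(i)} A cluster present in one measure but not the other contains a far trajectory. \emph{(ii)} A short cluster is isolated relative to \(\PP_n^{\mathrm s}\), or to \(\PP^{\mathrm s}_{Q_i}\), but is approached within \(Lr_n\) by a far trajectory. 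Removing far trajectories can only create isolation, never destroy it. So every cluster in \(\xi_{k,n}^{\mathrm s}\) that is missing from \(\xi_{k,n}\) is of type (ii), and every cluster in \(\xi_{k,n}\) missing from \(\xi_{k,n}^{\mathrm s}\) is of type (i). The same holds blockwise for \(\eta\).

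Type (i) is handled with the disjointness from the proof of Lemma~\ref{lem:isolated-factorial}. Distinct isolated connected clusters are pairwise disjoint, both globally and within a block, and within a block the isolation is tested against \(\PP_{Q_i}\). Hence each far trajectory lies in at most one counted cluster of \(\xi_{k,n}\) and in at most one counted cluster of \(\eta_{k,n}\). The type (i) contribution is therefore at most \(2N_n^{\mathrm f}\).

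Type (ii) uses the geometric part of Lemma~\ref{lem:far-influence}. If a far trajectory approaches a connected short cluster whose first trajectory starts in \(Q_i\), then \(i\in\mathcal D_n\). The plan is to bound the number of isolated connected short clusters whose anchor lies in \(\bigcup_{i\in\mathcal D_n}Q_i\). Disjointness would give a bound of \(|\mathcal D_n|\) times the maximal number of disjoint short clusters anchored in a single block. That maximum is not deterministic, so we need a probabilistic bound instead. The key observation is that \(\mathcal D_n\) is measurable with respect to the far process, which is independent of the short process by thinning. Conditional on the far process, the relevant count is \(N_h\) for the short process with \(h\) restricted to clusters anchored in \(A:=\bigcup_{i\in\mathcal D_n}Q_i\), where \(|A|=|\mathcal D_n|/\rho_{k,n}\). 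Lemma~\ref{lem:isolated-factorial} then gives
\[
\mathbb E\bigl[e^{\theta N_h}\,\big|\,\PP_n^{\mathrm f}\bigr]\le\exp\{C|\mathcal D_n|(e^\theta-1)\}.
\]
We apply this to both the global isolation and the blockwise isolation; both are of the form \(s_{\mathrm{iso},n}(\cdot,\Pi)\) for a restricted Poisson process \(\Pi\). On the event \(\{|\mathcal D_n|<\varepsilon\rho_{k,n}\}\), the exponential Markov inequality yields
\[
\mathbb P(N_h\ge\delta\rho_{k,n}/4)\le\exp\{-\rho_{k,n}(\theta\delta/4-C\varepsilon(e^\theta-1))\}.
\]
We choose \(\theta\) large first and then \(\varepsilon\) small, which makes this decay as fast as desired. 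The complementary event \(\{|\mathcal D_n|\ge\varepsilon\rho_{k,n}\}\) is superexponentially negligible by Lemma~\ref{lem:far-influence}, and \(\{N_n^{\mathrm f}\ge\delta\rho_{k,n}/8\}\) is as well.

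Combining the pieces, the total variation discrepancy is at most
\[
\rho_{k,n}^{-1}\bigl(4N_n^{\mathrm f}+2N_{h}^{\mathrm{glob}}+2N_h^{\mathrm{blk}}\bigr),
\]
and a union bound over the finitely many events finishes the argument. The main obstacle is the step requiring \(n\) large in Lemma~\ref{lem:far-influence}: one must check that the ``approached'' relation in type (ii) is exactly the hypothesis of that lemma for both isolation notions, including the case where the cluster's starting points straddle blocks in \(\xi\). The lemma only uses the anchor block, so this should be fine. A second point to verify is measurability, namely conditioning on the far process while \(h\) depends on it. This can be handled by working on the product space of the independent thinned processes and using Fubini.
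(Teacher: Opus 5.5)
Your argument is correct and has the same skeleton as the paper's proof. Lost clusters are charged injectively to far trajectories through disjointness of isolated connected clusters. Gained clusters have anchor blocks in $\mathcal D_n$ by Lemma~\ref{lem:far-influence}. Independence of $\mathcal D_n$ from the short process permits exponential Markov on $\{|\mathcal D_n|\le\varepsilon\rho_{k,n}\}$, with $\theta$ chosen before $\varepsilon$.

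The difference is in how you bound the gained global clusters.
\begin{itemize}
\item The paper localizes them. It shows every starting point of such a cluster lies in $Q_i^{\mathrm s,+}$ and bounds the count by $V_{i,n}$, which tests isolation only against $\PP^{\mathrm s}_{Q_i^{\mathrm s,+}}$. It then handles the overlap of enlarged cubes with a $3^d$-colouring and H\"older's inequality, summing over deterministic index sets $I$.
\item You apply Lemma~\ref{lem:isolated-factorial} once to the whole short process, conditional on the far process. Your $h$ is the connection indicator times the indicator that the anchor starts in $A=\bigcup_{i\in\mathcal D_n}Q_i$. The lemma's spatial assertion then gives $\Lambda_h\le C\rho_{k,n}|A|=C|\mathcal D_n|$ directly.
\end{itemize}
Your route is shorter. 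Dependence between nearby clusters is already absorbed by the disjointness built into the factorial-moment bound, so the enlarged cubes, the containment argument and the colouring are unnecessary here. The paper's local route produces uniformly bounded local exponential moments with finite-range dependence. That same structure is reused in Lemma~\ref{lem:short-localization} and in the hard-core lower bound.

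One justification needs correcting. The blockwise count $\sum_{i\in\mathcal D_n}U_{i,n}$ is not $N_h$ for a single Poisson process $\Pi$, because isolation is tested against $\PP^{\mathrm s}_{Q_i}$, which changes with the block. The bound you want still holds. Conditional on the far process, the $U_{i,n}$ are independent across blocks. Each satisfies $\mathbb E e^{\theta U_{i,n}}\le\exp\{C(e^\theta-1)\}$ by Lemma~\ref{lem:isolated-factorial} with anchor set $Q_i$. Together these give $\exp\{C|\mathcal D_n|(e^\theta-1)\}$. Alternatively, note that the factorial-moment proof uses only pairwise disjointness of the counted clusters, which also holds blockwise.

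Your constants are loose as well: in fact
\[
\rho_{k,n}\bigl(d_{\mathrm{TV}}(\xi_{k,n},\xi^{\mathrm s}_{k,n})+d_{\mathrm{TV}}(\eta_{k,n},\eta^{\mathrm s}_{k,n})\bigr)\le 2N_n^{\mathrm f}+N_h^{\mathrm{glob}}+N_h^{\mathrm{blk}},
\]
and your thresholds should be matched to that bound. This is cosmetic.
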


The second input bounds the remaining discrepancy by local counts whose
exponential moments converge to one. Their common zero event will be
used again in Section~\ref{sec:interact}.

\begin{lemma}[Localization errors of the short process]
\label{lem:short-localization}
For all sufficiently large \(n\),
\(\rho_{k,n}d_{\mathrm{TV}}(\xi_{k,n}^{\mathrm s},
\eta_{k,n}^{\mathrm s})\le\sum_iE_{i,n}\).
For every \(\theta>0\),  we have
\(
\sup_{i\le\rho_{k,n}}\mathbb E e^{\theta E_{i,n}}
\longrightarrow1.
\)
\end{lemma}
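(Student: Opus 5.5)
The plan has two parts: a deterministic comparison that bounds \(\rho_{k,n}\,d_{\mathrm{TV}}(\xi_{k,n}^{\mathrm s},\eta_{k,n}^{\mathrm s})\) by the local counts, and a factorial-moment bound via Lemma~\ref{lem:isolated-factorial}.

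\textbf{Deterministic comparison.} Take \(n\) so large that \(kh_n^{\mathrm s}<a_n\) and \(a_n<1/2\), as permitted by \eqref{eq:range-cutoff}. Both \(\rho_{k,n}\xi_{k,n}^{\mathrm s}\) and \(\rho_{k,n}\eta_{k,n}^{\mathrm s}\) are sums of unit point masses, and the shapes agree whenever the same unordered cluster is counted. Hence \(\rho_{k,n}d_{\mathrm{TV}}\) is at most the number of clusters counted by exactly one of the two measures.

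Let \(\mathcal Y\) be counted by \(\xi^{\mathrm s}\) but not by \(\eta^{\mathrm s}\). Let \(Q_i\) be the block of its first starting point in the fixed Borel order. By the argument at the start of the proof of Lemma~\ref{lem:far-influence}, all starting points of \(\mathcal Y\) lie within \(h_n^{\mathrm s}\) of that point, so \(\mathcal Y(0)\subseteq Q_i^{\mathrm s,+}\).

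\begin{itemize}
\item If \(\mathcal Y(0)\subseteq Q_i\), then \(\mathcal Y\) is globally isolated, hence isolated relative to \(\PP^{\mathrm s}_{Q_i}\), and would be counted by \(\eta^{\mathrm s}\). So this case cannot occur.
\item Otherwise \(\mathcal Y\) has starting points in both \(Q_i\) and \(Q_i^{\mathrm s,+}\setminus Q_i\). It is isolated relative to the smaller process \(\PP^{\mathrm s}_{Q_i^{\mathrm s,+}}\), so it is counted in \(\widetilde Z_{i,n}^{\mathrm s}\).
\end{itemize}

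Now let \(\mathcal Y\) be counted by \(\eta^{\mathrm s}\) in \(Q_i\) but not by \(\xi^{\mathrm s}\). Then some short trajectory starting outside \(Q_i\) comes within \(Lr_n\) of a member of \(\mathcal Y\). By the same triangle inequality as in Lemma~\ref{lem:far-influence}, with both ranges \(\le v_n\), this trajectory starts within \(r_nv_n+Lr_n+r_nv_n+h_n^{\mathrm s}\le kh_n^{\mathrm s}\) of \(Q_i\), using \(k\ge2\). So it starts in \(Q_i^{\mathrm s,+}\setminus Q_i\), and \(\mathcal Y\) is counted in \(Z_{i,n}^{\mathrm{iso},+,\mathrm s}\).

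Each discrepant cluster is thus charged to a count at its own block \(i\), which proves the first assertion.

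\textbf{Exponential moments.} By Cauchy--Schwarz,
\[
\mathbb Ee^{\theta E_{i,n}}\le\big(\mathbb Ee^{2\theta Z_{i,n}^{\mathrm{iso},+,\mathrm s}}\big)^{1/2}\big(\mathbb Ee^{2\theta\widetilde Z_{i,n}^{\mathrm s}}\big)^{1/2},
\]
so it suffices that each factor tends to \(1\) uniformly in \(i\). By Lemma~\ref{lem:isolated-factorial} it is enough to show \(\Lambda_h\to0\) uniformly in \(i\).

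For \(\widetilde Z^{\mathrm s}_{i,n}\), take \(\Pi=\PP^{\mathrm s}_{Q_i^{\mathrm s,+}}\) and let \(h\) be the connection indicator times the requirement that starting points lie in both \(Q_i\) and \(Q_i^{\mathrm s,+}\setminus Q_i\). On this set, every starting point lies within \(h_n^{\mathrm s}\) of the boundary of \(Q_i\), so \(h\) vanishes unless some starting point is in the boundary layer
\[
A_i:=\{x:d_{\mathbb T}(x,\partial Q_i)\le h_n^{\mathrm s}\}.
\]
The spatial bound of Lemma~\ref{lem:isolated-factorial} gives
\[
\Lambda_h\le C\rho_{k,n}|A_i|\le C\rho_{k,n}a_n^{d-1}h_n^{\mathrm s}\le C b_n(v_n+L)\to0.
\]

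\textbf{Main obstacle.} The hard step is \(Z^{\mathrm{iso},+,\mathrm s}_{i,n}\), because its \(h\) carries an extra blocker and is not simply a connection indicator. Since \(Z^{\mathrm{iso},+,\mathrm s}_{i,n}\le\#\{\text{blocked clusters}\}\), I would bound it by the sum over ordered \((k+1)\)-tuples \((\mathcal Y,\gamma')\) of short trajectories with \(\mathcal Y\) connected in \(Q_i\) and \(\gamma'\) blocking from outside. Counted clusters are disjoint by the isolation argument in the proof of Lemma~\ref{lem:isolated-factorial}, so the factorial-moment argument still applies, with each cluster weighted by its number of blockers. The blockers may be shared between clusters, though, so a direct product bound fails.

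I would handle this as follows. Each counted cluster \(\mathcal Y\) is isolated within \(Q_i\), so a shared blocker must start in the layer. Dominate \(Z^{\mathrm{iso},+,\mathrm s}_{i,n}\) by the number \(N'\) of connected \((k+1)\)-configurations anchored in the layer, using the graph-connected integral \eqref{eq:connected-pattern-integral} with \(j=k+1\) and \(A=A_i\). This gives
\[
\mathbb EN'\le Cn^{k+1}r_n^{dk}|A_i|\le Cs_n\,b_n(v_n+L)\to0.
\]
To control higher moments of \(N'\), I would expand \(\mathbb E[(N')_m]\) by Mecke's formula: the \(m\) patterns overlap only in blockers, and each overlap pattern forms a graph-connected union estimated by \eqref{eq:connected-pattern-integral} with a gain of a power of \(s_n\). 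If that combinatorics becomes unwieldy, the fallback is a Poisson-cluster argument in which each layer trajectory carries an independent count of attached clusters with exponentially bounded moments, because \(\Lambda\le C\) per trajectory. Then \(\log\mathbb Ee^{\theta Z}\le C(\theta)\,\mathbb EN'\to0\), uniformly in \(i\).
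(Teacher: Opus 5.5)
Your deterministic comparison and your bound for $\widetilde Z_{i,n}^{\mathrm s}$ are correct and essentially coincide with the paper's. Using the two-sided layer around $\partial Q_i$ instead of $Q_i^{\mathrm s,+}\setminus Q_i$ changes nothing.

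The gap is the exponential moment of $Z_{i,n}^{\mathrm{iso},+,\mathrm s}$, which you leave open, and your main route there cannot succeed. Take the count $N'$ of connected $(k+1)$-configurations, defined without isolation as in your first-moment computation. It satisfies $\mathbb E e^{\theta N'}=\infty$ for every $\theta>0$ and every $n$. Indeed, let $P$ be the number of short trajectories starting in $Q_i\cap B$, where $B$ is a ball of radius $Lr_n/2$ centred on $\partial Q_i$. If one further short trajectory starts in $B\setminus Q_i$, then all of them are within $Lr_n$ at time $0$, so $N'\ge\binom{P}{k}$. Since $P$ is Poisson with positive mean and $k\ge2$, $\mathbb E e^{\theta\binom{P}{k}}=\infty$. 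Hence no Mecke expansion of $\mathbb E[(N')_m]$ can close; moreover, once isolation is dropped, the patterns do not overlap only in blockers.

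Your fallback is also not an argument as written. The cluster counts attached to different layer trajectories are functionals of the same process $\mathcal P^{\mathrm s}_{Q_i}$, so they are not independent. It can be repaired by conditioning on the trajectories starting in $Q_i^{\mathrm s,+}\setminus Q_i$, which are independent of $\mathcal P^{\mathrm s}_{Q_i}$. Conditionally, $Z_{i,n}^{\mathrm{iso},+,\mathrm s}$ is an $N_h$ for $\mathcal P^{\mathrm s}_{Q_i}$, with $h$ the connection indicator times the indicator of being approached by one of the now fixed outer trajectories. Lemma~\ref{lem:isolated-factorial} then applies, and the Poisson exponential formula over the outer layer gives $\log\mathbb E e^{\theta Z}\le C_\theta\,\mathbb E N'$. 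This step, however, is not in your proposal.

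The paper avoids all of this with a one-line domination. Every cluster counted by $Z_{i,n}^{\mathrm{iso},+,\mathrm s}$ is a connected short cluster in $Q_i$ that is isolated relative to $\mathcal P^{\mathrm s}_{Q_i}$. So $Z_{i,n}^{\mathrm{iso},+,\mathrm s}\le U_{i,n}$, the number of all such clusters. $U_{i,n}$ is an $N_h$ with anchor set $Q_i$, so Lemma~\ref{lem:isolated-factorial} gives $\sup_{n,i}\mathbb E e^{tU_{i,n}}<\infty$. Cauchy--Schwarz on the event $\{Z_{i,n}^{\mathrm{iso},+,\mathrm s}>0\}$, together with your own first-moment bound, then yields, uniformly in $i$,
\[
\mathbb E e^{\theta Z_{i,n}^{\mathrm{iso},+,\mathrm s}}-1
\le\big(\mathbb E e^{2\theta U_{i,n}}\big)^{1/2}
\mathbb P\big(Z_{i,n}^{\mathrm{iso},+,\mathrm s}>0\big)^{1/2}
\le C_\theta\big(\mathbb E Z_{i,n}^{\mathrm{iso},+,\mathrm s}\big)^{1/2}
\le C_\theta s_n^{1/2}.
\]
The point you missed is this: a vanishing probability of being nonzero, combined with uniformly bounded exponential moments of a dominating isolated count, already forces the exponential moment to tend to one. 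No higher-moment combinatorics for the blocked configurations is needed.
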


\begin{proof}[Proof of Proposition~\ref{pr:56}]
Because \(h_n^{\mathrm s}=o(a_n)\), the intersection graph of the
enlarged cubes has degree bounded independently of \(n\). Color this
graph with a fixed number \(M\) of colors. Within each color class, the
variables \(E_{i,n}\) depend on disjoint restrictions of the Poisson
process and are independent. H\"older's inequality and independence
within colors give
\(
\mathbb E e^{\theta\sum_iE_{i,n}}
\le\prod_i(\mathbb E e^{M\theta E_{i,n}})^{1/M}
\le\big(\sup_i\mathbb E e^{M\theta E_{i,n}}\big)^{\rho_{k,n}/M}.
\)
Lemma~\ref{lem:short-localization} and exponential Markov inequality
therefore imply, for every \(\delta,\theta>0\),
\[
\limsup_{n\to\infty}\frac1{\rho_{k,n}}
\log\mathbb P\big(
d_{\mathrm{TV}}(\xi_{k,n}^{\mathrm s},\eta_{k,n}^{\mathrm s})
>\delta\big)\le-\theta\delta.
\]
Letting \(\theta\to\infty\) proves exponential equivalence of the two
short-process measures. By the triangle inequality,
\(d_{\mathrm{TV}}(\xi_{k,n},\eta_{k,n})\) is at most the sum of this
distance and the two distances in Lemma~\ref{lem:remove-far}.
If that sum exceeds \(\delta\), either the short-process distance or
the sum in Lemma~\ref{lem:remove-far} exceeds \(\delta/2\).
Both probabilities are superexponentially small, which proves the
proposition.
\end{proof}

\subsection{Removal of trajectories with large range}

Deleting far trajectories removes clusters that contain them and can
create short clusters that they previously prevented from being isolated.
The first effect is bounded by the number of far trajectories, because
isolated connected clusters are disjoint. The second is localized in the
affected blocks, which are independent of the short process.

\begin{proof}[Proof of Lemma~\ref{lem:remove-far}]
Let \(U_{i,n}\) count connected short clusters with all starting
points in \(Q_i\) that are isolated relative to
\(\PP_{Q_i}^{\mathrm s}\). Define \(V_{i,n}\) in the same way with
\(Q_i^{\mathrm s,+}\) in place of \(Q_i\).
The enlarged cube is contained in the torus projection of a Euclidean
cube of side \(a_n+2kh_n^{\mathrm s}\). Since
\(h_n^{\mathrm s}/a_n\to0\) and \(a_n^d=\rho_{k,n}^{-1}\),
both regions have volume at most \(C/\rho_{k,n}\), uniformly in
\(i\) for sufficiently large \(n\).
Their short-process intensities are bounded by \(\Theta_n\).
Applying Lemma~\ref{lem:isolated-factorial} with the relevant region
as anchor set therefore gives
\(\sup_{n,i}\mathbb E e^{tU_{i,n}}<\infty\) and
\(\sup_{n,i}\mathbb E e^{tV_{i,n}}<\infty\) for every \(t>0\).
Here and below, these suprema are over sufficiently large \(n\).
Cauchy--Schwarz gives
\(\mathbb E e^{\theta(U_{i,n}+V_{i,n})}
\le\sqrt{\mathbb E e^{2\theta U_{i,n}}}
\sqrt{\mathbb E e^{2\theta V_{i,n}}}\), so
\(K_\theta:=\sup_{n,i}\mathbb E e^{\theta(U_{i,n}+V_{i,n})}<\infty\).

We next compare the actual clusters contributing to the measures.
Deleting trajectories preserves connectedness of every remaining tuple
and can only improve its isolation. Thus, a cluster counted in the full
process disappears under deletion only if it contains a far trajectory.
Distinct isolated connected clusters are disjoint: if they shared a
trajectory, a member belonging to only one cluster would approach a
member of the other, contradicting isolation.
Assigning one far trajectory to each disappearing global cluster
therefore bounds their number by \(N_n^{\mathrm f}\).

Conversely, suppose a cluster is globally isolated in the short process
and was not isolated in the full process. An approaching trajectory
that was deleted must be far. Anchor the cluster at the block containing
the starting point of its first trajectory in the fixed Borel order.
The range bound places every other starting point within
\(h_n^{\mathrm s}\) of that anchor, so all of them lie in
\(Q_i^{\mathrm s,+}\). Global isolation in the short process
implies isolation relative to \(\PP_{Q_i^{\mathrm s,+}}^{\mathrm s}\),
and the cluster is counted by \(V_{i,n}\).
Lemma~\ref{lem:far-influence} places its anchor block in
\(\mathcal D_n\). Choosing a unique anchor assigns each such
cluster to one of these counts.

For the blockwise measures, every disappearing cluster again contains
a far trajectory. Clusters isolated within one block are disjoint,
and clusters from different blocks have disjoint starting points, so
their total number is at most \(N_n^{\mathrm f}\).
A cluster gained in block \(Q_i\) is counted by \(U_{i,n}\) and
must previously have been blocked by a far trajectory starting in
\(Q_i\). For that trajectory \((z,\gamma)\), we have
\(d_{\mathbb T}(z,Q_i)=0\), hence \(i\in\mathcal D_n\).
Each cluster retained in both measures contributes exactly the same
shape atom, since the centering rule depends only on that cluster.
Cancelling these common contributions and using the triangle inequality
for the remaining atoms gives
\[
\rho_{k,n}d_{\mathrm{TV}}(\xi_{k,n},\xi_{k,n}^{\mathrm s})
\le N_n^{\mathrm f}+\sum_{i\in\mathcal D_n}V_{i,n},\quad\text{ and }\quad 
\rho_{k,n}d_{\mathrm{TV}}(\eta_{k,n},\eta_{k,n}^{\mathrm s})
\le N_n^{\mathrm f}+\sum_{i\in\mathcal D_n}U_{i,n}.\]
For sufficiently large \(n\), \(kh_n^{\mathrm s}<a_n/2\).
Two enlarged cubes can then intersect only when their block indices
differ by at most one in every coordinate, with indices read
periodically on the torus. Their intersection graph has degree at
most \(3^d-1\), and greedy coloring uses at most \(M:=3^d\) colors.
Fix such a coloring, with classes \(\mathcal I_1,\ldots,\mathcal I_M\),
allowing empty classes. Within each class the enlarged cubes are
disjoint, so the corresponding variables \(U_{i,n}+V_{i,n}\) are
independent. Set
\(T_n:=\sum_{i\in\mathcal D_n}(U_{i,n}+V_{i,n})\).
For \(\theta,\varepsilon>0\) and a deterministic set \(I\) of
block indices, H\"older's inequality over the color classes and
independence within each class yield
\[
\begin{aligned}
\mathbb E e^{\theta\sum_{i\in I}(U_{i,n}+V_{i,n})}
\le\prod_{\ell=1}^M
\Big(\mathbb E e^{M\theta
\sum_{i\in I\cap\mathcal I_\ell}(U_{i,n}+V_{i,n})}\Big)^{1/M}
&=\prod_{i\in I}
\big(\mathbb E e^{M\theta(U_{i,n}+V_{i,n})}\big)^{1/M}
\le K_{M\theta}^{|I|/M},\\
\mathbb E\big[e^{\theta T_n}
\mathbf{1}\{|\mathcal D_n|\le\varepsilon\rho_{k,n}\}\big]
&\le\sum_{I:\,|I|\le\varepsilon\rho_{k,n}}
\mathbb P(\mathcal D_n=I)K_{M\theta}^{|I|/M}
\le K_{M\theta}^{\varepsilon\rho_{k,n}/M}.
\end{aligned}
\]
The random set \(\mathcal D_n\) depends only on the far process and
is independent of the entire collection of short-process counts.
Consequently, conditioning on \(\mathcal D_n=I\) leaves their joint
law unchanged, which justifies the last line. Also, \(K_{M\theta}\ge1\),
and the probabilities in that sum add to at most one.
Fix \(\delta>0\) and a desired exponential rate \(A>0\).
Choose \(\theta>0\) so that \(\theta\delta/2>2A\), and then
choose \(\varepsilon>0\) so that
\(\varepsilon M^{-1}\log K_{M\theta}<A\).
Such a choice is possible because \(K_{M\theta}<\infty\) for
this fixed \(\theta\); if its logarithm is zero, any
\(\varepsilon>0\) works. The deterministic discrepancy bound and
exponential Markov inequality on
\(\{|\mathcal D_n|\le\varepsilon\rho_{k,n}\}\) now give
\[
\begin{aligned}
\mathbb P\big(
d_{\mathrm{TV}}(\xi_{k,n},\xi_{k,n}^{\mathrm s})
+d_{\mathrm{TV}}(\eta_{k,n},\eta_{k,n}^{\mathrm s})>\delta\big)
&\le\mathbb P(N_n^{\mathrm f}>\delta\rho_{k,n}/4)
+\mathbb P(T_n>\delta\rho_{k,n}/2)\\
&\quad\le\mathbb P(N_n^{\mathrm f}>\delta\rho_{k,n}/4)
+\mathbb P(|\mathcal D_n|>\varepsilon\rho_{k,n})
+e^{-\rho_{k,n}(\theta\delta/2-\varepsilon M^{-1}\log K_{M\theta})}.
\end{aligned}
\]
Lemma~\ref{lem:far-influence} makes the first two terms in the last
line superexponentially small, while the last term is at most
\(e^{-A\rho_{k,n}}\). Bounding their sum by three times its largest
term gives an upper limit of at most \(-A\) for the normalized
logarithm of the probability in the statement. Since \(A>0\) was
arbitrary and \(\rho_{k,n}^{-1}\log3\to0\), this upper limit is
\(-\infty\), proving the lemma.
\end{proof}

\subsection{Localization of the short process}

The deterministic range bound confines every interaction involving a
short cluster to an enlarged starting block. We distinguish a cluster
that loses isolation because of a neighboring block from a cluster whose
starting points lie on both sides of a block boundary. Their counts have
small exponential moments by the spatial integration and factorial-moment
estimates already established.

\begin{figure}[!htbp]
    \centering
    \begin{minipage}[t]{0.4\textwidth}
        \centering
        \includegraphics[width=0.95\textwidth]{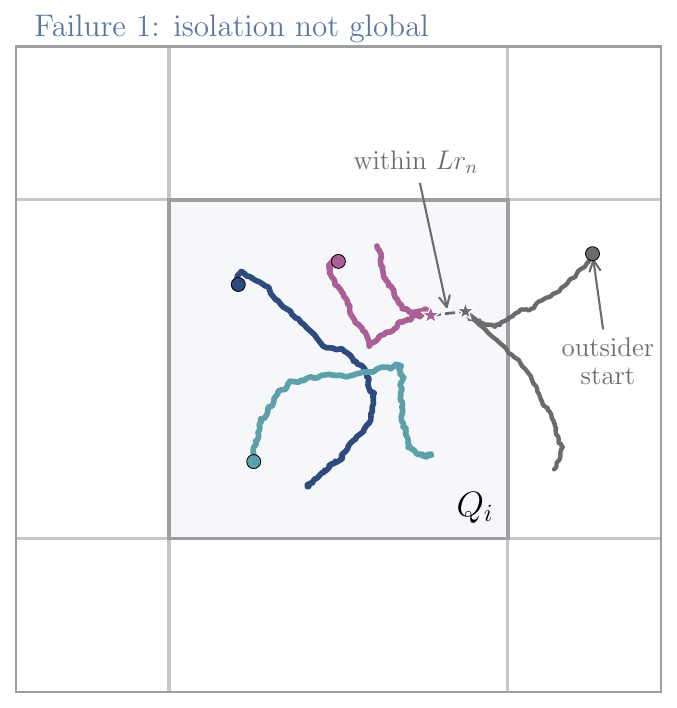}
        \vspace{0.05cm}
        {\small (a) Locally isolated, but not globally isolated.}
    \end{minipage}
    \qquad\qquad
    \begin{minipage}[t]{0.4\textwidth}
        \centering
        \includegraphics[width=0.95\textwidth]{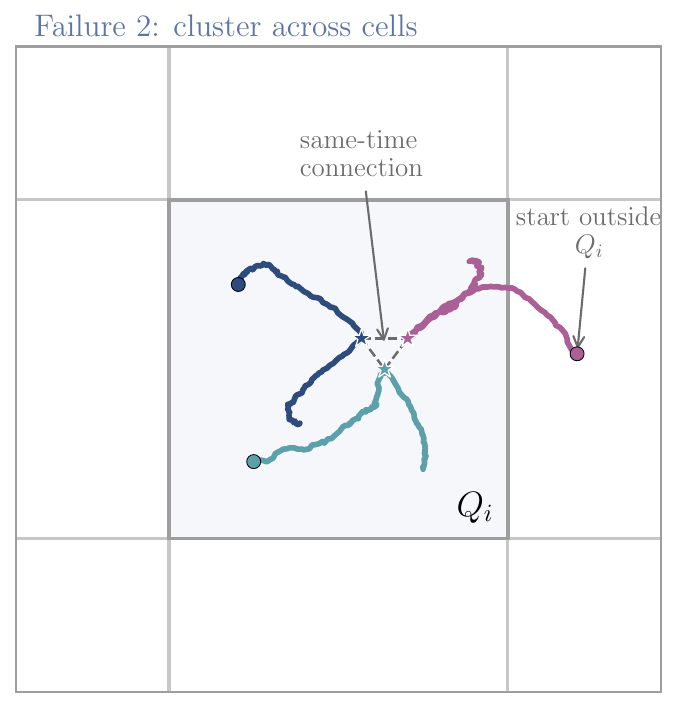}
        \vspace{0.05cm}
        {\small (b) Cluster with starting points on both sides of the boundary of \(Q_i\).}
    \end{minipage}
    \caption{The two localization errors in \(Q_i\).}
    \label{fig:localization-errors}
\end{figure}

\begin{proof}[Proof of Lemma~\ref{lem:short-localization}]
Suppose that two short trajectories starting at \(x\) and \(y\)
approach within \(Lr_n\) at time \(t\). The triangle inequality
gives \(d_{\mathbb T}(x,y)\le r_nv_n+Lr_n+r_nv_n=h_n^{\mathrm s}\).
In a connected cluster, every member approaches the anchor trajectory
at the common connection time. Thus, if the anchor starts in \(Q_i\),
every member starts within \(h_n^{\mathrm s}\) of \(Q_i\).
If another short trajectory approaches one of these members, its
starting point lies within \(2h_n^{\mathrm s}\) of \(Q_i\).
Since \(k\ge2\) and the enlargement has width \(kh_n^{\mathrm s}\),
both the entire cluster and every possible short blocker start in
\(Q_i^{\mathrm s,+}\).

A cluster counted locally in \(Q_i\) and failing global isolation
has an approaching short trajectory outside \(Q_i\). The preceding
geometry places that blocker in \(Q_i^{\mathrm s,+}\setminus Q_i\),
so the cluster is counted by \(Z_{i,n}^{\mathrm{iso},+,\mathrm s}\).
A globally isolated cluster whose starting points all lie in one
block is counted locally in that block as well. Every other globally
isolated cluster has starting points in several blocks. Choosing one
of those blocks as \(Q_i\) places all its starting points in
\(Q_i^{\mathrm s,+}\), with at least one in its outer layer.
Global isolation implies isolation in this enlarged block, so the
cluster is counted by \(\widetilde Z_{i,n}^{\mathrm s}\).
Hence, every cluster present in only one of the two measures is covered
by at least one of the error counts. The clusters present in both have
identical centered shape atoms and cancel. Each remaining atom has
total variation norm \(\rho_{k,n}^{-1}\), giving
\(\rho_{k,n}d_{\mathrm{TV}}(\xi_{k,n}^{\mathrm s},
\eta_{k,n}^{\mathrm s})\le\sum_iE_{i,n}\).

For the isolation error, let \(\Theta_n^{\mathrm s}\)
be the intensity of the short process on path space. Define
\(\chi_{i,n}(\gamma'_1,\ldots,\gamma'_{k+1})\) to be the
indicator that the first \(k\) trajectories are connected and start
in \(Q_i\), while the last starts in
\(Q_i^{\mathrm s,+}\setminus Q_i\) and approaches at least one
of them within \(Lr_n\). This indicator omits isolation and is
symmetric in its first \(k\) entries. Each cluster counted by
\(Z_{i,n}^{\mathrm{iso},+,\mathrm s}\) has at least one such
blocker. Counting all blockers and all \(k!\) orderings of its
members therefore gives an upper bound after division by \(k!\).
Recall the connected proximity-graph integral \(I_{j,n}(A)\) from
the proof of Lemma~\ref{lem-equival}. The multivariate Mecke formula
\cite{last-penrose2018} gives
\[
\begin{aligned}
\mathbb E Z_{i,n}^{\mathrm{iso},+,\mathrm s}
&\le\frac1{k!}\mathbb E
\sum_{(\gamma'_1,\ldots,\gamma'_{k+1})\in(\PP_n^{\mathrm s})_{\ne}^{k+1}}
\chi_{i,n}(\gamma'_1,\ldots,\gamma'_{k+1})
=\frac1{k!}\int\chi_{i,n}\,d(\Theta_n^{\mathrm s})^{\otimes(k+1)}
\le\frac1{k!}I_{k+1,n}(Q_i)\\
&\le Cn^{k+1}|Q_i|r_n^{dk}
=C\frac{n^{k+1}r_n^{dk}}{n^kr_n^{d(k-1)}}
=Cnr_n^d=Cs_n\longrightarrow0.
\end{aligned}
\]
For the bound by \(I_{k+1,n}(Q_i)\), the first \(k\) trajectories
form a complete proximity graph and the blocker has an edge to it,
so their union is connected. We keep the restriction that the first
starting point lies in \(Q_i\), discard the remaining spatial
restrictions, and replace \(\Theta_n^{\mathrm s}\) by
\(\Theta_n\). The final estimate is
\eqref{eq:connected-pattern-integral}; its constant is uniform in
\(i\), and \(|Q_i|=\rho_{k,n}^{-1}\).

Let \(U_{i,n}\) again denote the number of connected short clusters
isolated within \(Q_i\). Then
\(Z_{i,n}^{\mathrm{iso},+,\mathrm s}\le U_{i,n}\), and
Lemma~\ref{lem:isolated-factorial}, with anchor set \(Q_i\), gives
\(\sup_{n,i}\mathbb E e^{tU_{i,n}}<\infty\) for every \(t>0\).
The difference \(e^{\theta Z_{i,n}^{\mathrm{iso},+,\mathrm s}}-1\)
vanishes unless the error count is positive. Cauchy--Schwarz on this
event, followed by the inequality
\(\mathbb P(Z_{i,n}^{\mathrm{iso},+,\mathrm s}>0)
\le\mathbb E Z_{i,n}^{\mathrm{iso},+,\mathrm s}\), yields
\[
\begin{aligned}
\mathbb E\big(e^{\theta Z_{i,n}^{\mathrm{iso},+,\mathrm s}}-1\big)
&=\mathbb E\big[
(e^{\theta Z_{i,n}^{\mathrm{iso},+,\mathrm s}}-1)
\mathbf{1}\{Z_{i,n}^{\mathrm{iso},+,\mathrm s}>0\}\big]
\le\sqrt{\mathbb E e^{2\theta U_{i,n}}}
\sqrt{\mathbb P(Z_{i,n}^{\mathrm{iso},+,\mathrm s}>0)}
\le C_\theta\sqrt{s_n}\longrightarrow0.
\end{aligned}
\]
This convergence is uniform in \(i\) for each fixed \(\theta>0\).
In particular, the exponential moments of the first error count tend
to one, which is stronger than convergence of its mean alone.

For the boundary error, put
\(A_{i,n}:=Q_i^{\mathrm s,+}\setminus Q_i\), and let
\(\Theta_{i,n}^{\mathrm s,+}\) be the restriction of
\(\Theta_n^{\mathrm s}\) to trajectories starting in
\(Q_i^{\mathrm s,+}\).
Let \(h_{i,n}\) be the symmetric indicator that a tuple is connected
and has starting points in both \(Q_i\) and \(A_{i,n}\).
With this intensity and indicator, the variable \(N_h\) in
Lemma~\ref{lem:isolated-factorial} is exactly
\(\widetilde Z_{i,n}^{\mathrm s}\). Define its one-cluster
integral by
\(\Lambda_{i,n}:=\frac1{k!}\int h_{i,n}\,
d(\Theta_{i,n}^{\mathrm s,+})^{\otimes k}\).
To bound it, choose one of the at most \(k\) starting points in
\(A_{i,n}\) as anchor and drop the other block restrictions.
Symmetry then bounds \(\Lambda_{i,n}\) by
\(I_{k,n}(A_{i,n})/(k-1)!\).
The spatial integration in \eqref{eq:connected-pattern-integral}
places the other \(k-1\) starting points in balls about that anchor
and gives \(I_{k,n}(A_{i,n})\le C\rho_{k,n}|A_{i,n}|\).

The enlarged block is contained  a cube of side \(a_n+2kh_n^{\mathrm s}\).
Its outer layer therefore has volume at most
\((a_n+2kh_n^{\mathrm s})^d-a_n^d\le Ca_n^{d-1}h_n^{\mathrm s}\),
because \(h_n^{\mathrm s}/a_n\to0\).
Since \(\rho_{k,n}a_n^d=1\), Lemma~\ref{lem:isolated-factorial}
and its factorial-moment expansion give, for every integer \(m\ge1\)
and \(\theta>0\), that\(\Lambda_{i,n}
\le C\rho_{k,n}|A_{i,n}|
\le C\rho_{k,n}a_n^{d-1}h_n^{\mathrm s}
=C h_n^{\mathrm s}/a_n\longrightarrow0,\) and  \(\mathbb E[(\widetilde Z_{i,n}^{\mathrm s})_m]
\le\Lambda_{i,n}^m,\) and therefore
 \[\mathbb E e^{\theta\widetilde Z_{i,n}^{\mathrm s}}
=\sum_{m=0}^\infty\frac{(e^\theta-1)^m}{m!}
\mathbb E[(\widetilde Z_{i,n}^{\mathrm s})_m]
\le e^{\Lambda_{i,n}(e^\theta-1)}
\le e^{C(h_n^{\mathrm s}/a_n)(e^\theta-1)}\longrightarrow1.\]
The series has nonnegative terms, so its interchange with expectation
is justified by monotone convergence. All bounds are uniform in \(i\).
Finally, the two error counts need not be independent; Cauchy--Schwarz
gives
\(\mathbb E e^{\theta E_{i,n}}
\le\sqrt{\mathbb E e^{2\theta Z_{i,n}^{\mathrm{iso},+,\mathrm s}}}
\sqrt{\mathbb E e^{2\theta\widetilde Z_{i,n}^{\mathrm s}}}\).
Both factors tend to one uniformly in \(i\) by the preceding
estimates with \(2\theta\) in place of \(\theta\). This proves
the exponential-moment assertion and completes the lemma.
\end{proof}

\section{Free energy and hard-core interactions}
\label{sec:interact}

We first deduce Corollary~\ref{cor:free} from the LDP and the common
factorial-moment estimate. We then prove Proposition~\ref{pr:hard-core-lower}
by conditioning on the absence of far trajectories and tilting the
independent short blocks. The localization errors from
Section~\ref{sec:2proof} can all be excluded at subexponential cost under
this tilt. We use the product-domination theorem of Liggett, Schonmann,
and Stacey to pass from small individual error probabilities to the
simultaneous absence of all errors. Their absence gives exact equality
of the cluster measures, so the argument applies directly to the
forbidden set \(B\).

\begin{proof}[Proof of Corollary~\ref{cor:free}]
Set \(K_n:=\rho_{k,n}\xi_{k,n}(E_k)\),
\(\Psi(\mu):=-\beta\int W\,d\mu\), and \(V:=\|W\|_\infty\).
Here \(K_n\) is the number of globally isolated connected clusters.
Apply Lemma~\ref{lem:isolated-factorial} to the full Poisson process,
with the connection indicator as \(h\) and \(A=\Lambda\).
Since \(|\Lambda|=1\), it gives
\(\mathbb E e^{\theta K_n}\le
\exp\{C\rho_{k,n}(e^\theta-1)\}\) for every \(\theta>0\),
where \(C\) is independent of \(n\).
If \(V=0\), the Gibbs weight equals one, while the entropy is
nonnegative and vanishes at \(\tau_k\); both sides of the claimed
identity are zero. We therefore assume \(V>0\).

By the definition of the \(\tau\)-topology, integration against
the bounded measurable function \(W\) is continuous. Thus, \(\Psi\)
is continuous, although its values can be unbounded as the total mass
of its argument varies. To apply the unbounded form of Varadhan's
integral lemma, we verify its upper-tail condition using
\(\rho_{k,n}\Psi(\xi_{k,n})\le\beta VK_n\).
Fix \(\theta>\beta V\). On the event
\(\{\Psi(\xi_{k,n})\ge M\}\), with \(M>0\), this inequality
implies \(K_n\ge\rho_{k,n}M/(\beta V)\). Consequently,
\[
\begin{aligned}
e^{\rho_{k,n}\Psi(\xi_{k,n})}
\mathbf{1}\{\Psi(\xi_{k,n})\ge M\}
&\le e^{\beta VK_n}
\mathbf{1}\{K_n\ge\rho_{k,n}M/(\beta V)\}\\
&=e^{\theta K_n}e^{-(\theta-\beta V)K_n}
\mathbf{1}\{K_n\ge\rho_{k,n}M/(\beta V)\}\\
&\le e^{\theta K_n}
e^{-\rho_{k,n}(\theta-\beta V)M/(\beta V)}.
\end{aligned}
\]
Taking expectations and using the cluster-count bound yields
\[
\begin{aligned}
\limsup_{n\to\infty}\frac1{\rho_{k,n}}
\log\mathbb E\big[e^{\rho_{k,n}\Psi(\xi_{k,n})}
\mathbf{1}\{\Psi(\xi_{k,n})\ge M\}\big]
&\le-\frac{\theta-\beta V}{\beta V}M+C(e^\theta-1),\\
\lim_{M\to\infty}\limsup_{n\to\infty}\frac1{\rho_{k,n}}
\log\mathbb E\big[e^{\rho_{k,n}\Psi(\xi_{k,n})}
\mathbf{1}\{\Psi(\xi_{k,n})\ge M\}\big]
&=-\infty.
\end{aligned}
\]
In the second line, \(\theta>\beta V\) remains fixed while
\(M\to\infty\), so the coefficient of \(M\) is strictly negative.
This is the upper-tail condition in
\cite[Theorem~4.3.1]{dz98}. Together with the continuity of \(\Psi\)
and the good LDP from Theorem~\ref{thm:fine}, it verifies the
hypotheses of that integral lemma and gives
\[
\lim_{n\to\infty}\frac1{\rho_{k,n}}
\log\mathbb E e^{\rho_{k,n}\Psi(\xi_{k,n})}
=\sup_{\mu\in M_f(E_k)}\{\Psi(\mu)-h(\mu\mid\tau_k)\}.
\]
Substitution of \(\Psi\) proves the claimed variational formula.
\end{proof}

\subsection{Hard-core lower bound}
\label{subsec:hard-core}
Here, we proceed similarly as in \cite{jkm15}.
The lower bound requires exact absence of forbidden clusters, which a
comparison at a fixed positive total variation distance does not ensure.
We obtain this event by first requiring the far process to be empty and
then excluding the short-process localization errors under a product
tilt. Two inputs suffice: convergence of the partition function in one
short block and a subexponential bound for simultaneous absence of its
localization errors. The second input follows by showing that the
indicators of error-free blocks dominate independent Bernoulli variables
whose success probabilities tend to one.

Write \(\mathbb P_n^{\mathrm s}\) for the law of the short marked
process and \(\mathbb E_n^{\mathrm s}\) for its expectation.
Let \(\eta_{k,n}^{\mathrm s,(i)}\) be the unnormalized block
contribution obtained from \(\eta_{k,n}^{(i)}\) by replacing the
full process with the short process. Then
\(\eta_{k,n}^{\mathrm s}=\rho_{k,n}^{-1}
\sum_i\eta_{k,n}^{\mathrm s,(i)}\).
For \(\nu\in\mathcal M_p(E_k)\), put
\(G_B(\nu):=\exp\{-\beta\int w\,d\nu\}\mathbf{1}\{\nu(B)=0\}\),
and define
\[
a_n^{\mathrm s}:=\mathbb E_n^{\mathrm s}
G_B(\eta_{k,n}^{\mathrm s,(1)}),
\qquad
a_B:=\exp\Big\{-\tau_k(B)+
\int_{B^c}(e^{-\beta w}-1)\,d\tau_k\Big\}.
\]
Translation invariance makes the block normalizers identical for all
sufficiently large \(n\). Lemma~\ref{lem:isolated-factorial} makes
\(a_n^{\mathrm s}\) finite, and the event that a block is empty makes
it strictly positive. The first input identifies its limit.

\begin{lemma}[Partition function in one short block]
\label{lem:local-hard-core-laplace}
Under the assumptions of Theorem~\ref{thm:fine},
\(a_n^{\mathrm s}\to a_B>0\).
\end{lemma}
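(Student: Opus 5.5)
The plan is to show that the short block measure \(\eta_{k,n}^{\mathrm s,(1)}\) converges in total variation to a Poisson random measure \(\zeta\) with intensity \(\tau_k\). We then pass to the limit in the bounded-below but unbounded-above functional \(G_B\), using uniform exponential moments. For a Poisson random measure, the Laplace functional gives
\[
\mathbb E\big[e^{-\beta\int w\,d\zeta}\mathbf{1}\{\zeta(B)=0\}\big]
=\exp\Big\{\int\big(e^{-\beta w}\mathbf{1}_{B^c}-1\big)\,d\tau_k\Big\}=a_B.
\]
Here we use the function \(g=e^{-\beta w}\mathbf 1_{B^c}\in[0,e^{\beta\|w\|_\infty}]\) and the formula \(\mathbb E\prod_{x\in\zeta}g(x)=\exp\{\int(g-1)\,d\tau_k\}\). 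Since \(\tau_k(E_k)<\infty\), this limit is strictly positive.

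\textbf{Step 1 (total variation convergence for the short block).} First we compare \(\eta_{k,n}^{\mathrm s,(1)}\) with \(\eta_{k,n}^{(1)}\). The two measures can differ only if some trajectory starting in \(Q_1\) is far. This event has probability at most \(n|Q_1|q_n=m_nq_n\to0\) by \eqref{eq:range-cutoff}. Combined with Lemma~\ref{lem-equival}, this gives
\[
d_{\mathrm{TV}}\big(\mathcal L(\eta_{k,n}^{\mathrm s,(1)}),\mathcal L(\zeta)\big)\to0 .
\]
Alternatively, one could rerun the proof of Lemma~\ref{lem-equival} with intensity \(\Theta_n^{\mathrm s}\le\Theta_n\). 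All error integrals are monotone in the intensity. The intensity term then carries the extra factor \(\mathbf 1\{R_k\le v_n\}\to1\), which is handled by the same dominated convergence.

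\textbf{Step 2 (uniform integrability and passage to the limit).} We have \(0\le G_B(\nu)\le e^{\beta\|w\|_\infty\nu(E_k)}\). Lemma~\ref{lem:isolated-factorial}, applied with anchor set \(Q_1\) and intensity \(\Theta_n^{\mathrm s}\le\Theta_n\), gives
\[
\sup_n\mathbb E e^{t\eta_{k,n}^{\mathrm s,(1)}(E_k)}<\infty\quad\text{for every }t>0 ,
\]
and the same holds for \(\zeta\). Take a maximal coupling \((\widehat\eta,\widehat\zeta)\) of the two laws. Then
\[
|a_n^{\mathrm s}-a_B|\le\widehat{\mathbb E}\big[(G_B(\widehat\eta)+G_B(\widehat\zeta))\mathbf 1\{\widehat\eta\ne\widehat\zeta\}\big].
\]
By Cauchy--Schwarz, this is at most a constant times \(\widehat{\mathbb P}(\widehat\eta\ne\widehat\zeta)^{1/2}\to0\). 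This is exactly the argument already used in the proof of Proposition~\ref{pr:55}.

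\textbf{Main obstacle.} The step needing the most care is Step 1: Lemma~\ref{lem-equival} must apply to the short process, or equivalently, the short and full block measures must agree with high probability. Deleting a far trajectory from \(\PP_{Q_1}\) can both remove clusters and create new isolated ones. Both effects, however, require at least one far trajectory starting in \(Q_1\). The expected number of such trajectories is \(n a_n^d q_n=m_nq_n\le M_0m_n^{-7}\to0\). Hence the two measures coincide outside an event of vanishing probability, and the coupling estimate closes the argument. Measurability of \(G_B\) on \(\mathcal M_p(E_k)\) is immediate from the evaluation \(\sigma\)-field.
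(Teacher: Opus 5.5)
Your proposal is correct and follows the paper's proof: it uses the same deletion coupling giving \(d_{\mathrm{TV}}\le m_nq_n\) together with Lemma~\ref{lem-equival}, the uniform exponential moments from Lemma~\ref{lem:isolated-factorial}, and the Poisson product formula identifying the limit as \(a_B\). The only difference is minor: you pass to the limit in the unbounded weight \(G_B\) via a maximal coupling and Cauchy--Schwarz, as in the proof of Proposition~\ref{pr:55}, whereas the paper truncates at level \(M\) and uses \(x-\min\{x,M\}\le x^2/M\); both rest on the same uniform second-moment bound.
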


Recall the error counts \(E_{i,n}\) from Section~\ref{sec:2proof}, and
set \(\mathcal A_{i,n}:=\{E_{i,n}>0\}\) and
\(H_n:=\bigcap_{i=1}^{\rho_{k,n}}\mathcal A_{i,n}^c\).
Define the tilted short-process law by
\[
\frac{d\widehat{\mathbb P}_n^{\mathrm s}}
{d\mathbb P_n^{\mathrm s}}
:=(a_n^{\mathrm s})^{-\rho_{k,n}}
\prod_{i=1}^{\rho_{k,n}}G_B(\eta_{k,n}^{\mathrm s,(i)}).
\]
Each factor depends on one original block, and the blocks are independent
under \(\mathbb P_n^{\mathrm s}\). Their expectations are
\(a_n^{\mathrm s}\), so this density integrates to one and preserves
independence of the blocks. The second input controls the event on which
the short-process measures agree exactly.

\begin{lemma}[Simultaneous absence of boundary errors]
\label{lem:hard-core-boundary-void}
Under the assumptions of Theorem~\ref{thm:fine},
\[
\lim_{n\to\infty}\frac1{\rho_{k,n}}
\log\widehat{\mathbb P}_n^{\mathrm s}(H_n)=0.
\]
For all sufficiently large \(n\),
\(\xi_{k,n}^{\mathrm s}=\eta_{k,n}^{\mathrm s}\) on \(H_n\).
\end{lemma}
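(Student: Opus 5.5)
The second assertion is deterministic, so I would prove it first. By Lemma~\ref{lem:short-localization}, for all sufficiently large \(n\) we have \(\rho_{k,n}d_{\mathrm{TV}}(\xi_{k,n}^{\mathrm s},\eta_{k,n}^{\mathrm s})\le\sum_iE_{i,n}\). On \(H_n\) every \(E_{i,n}\) vanishes, so the two short-process measures coincide exactly. For the limit, the upper bound \(\widehat{\mathbb P}_n^{\mathrm s}(H_n)\le1\) gives \(\limsup\le0\). The whole task is therefore the lower bound \(\widehat{\mathbb P}_n^{\mathrm s}(H_n)\ge\exp\{-o(\rho_{k,n})\}\).

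\textbf{Dependency structure under the tilt.} The density of \(\widehat{\mathbb P}_n^{\mathrm s}\) is a product of factors, one per original block. Hence, under the tilted law, the short marked process restricted to distinct blocks is still independent. Each \(E_{i,n}\) is determined by the short trajectories starting in \(Q_i^{\mathrm s,+}\). Since \(kh_n^{\mathrm s}<a_n/2\) eventually, this set meets only the \(3^d\) blocks \(Q_j\) with \(j\) adjacent to \(i\) (indices read periodically). Consequently \(\mathcal A_{i,n}\) is measurable with respect to these neighbouring blocks. Events \(\mathcal A_{i,n}\) and \(\mathcal A_{j,n}\) whose neighbourhoods are disjoint are independent, and \(\mathcal A_{i,n}\) is mutually independent of all \(\mathcal A_{j,n}\) with \(j\) outside a set of size at most \(5^d\). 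This gives a dependency graph of degree bounded uniformly in \(n\).

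\textbf{Smallness of each error under the tilt.} I would show that \(\sup_i\widehat{\mathbb P}_n^{\mathrm s}(\mathcal A_{i,n})\to0\); I expect this to be the main point. Only the factors of the blocks in the neighbourhood \(N(i)\) of \(i\) matter, since the other factors integrate to \(a_n^{\mathrm s}\) and cancel. Therefore
\[
\widehat{\mathbb P}_n^{\mathrm s}(\mathcal A_{i,n})
=(a_n^{\mathrm s})^{-|N(i)|}\,
\mathbb E_n^{\mathrm s}\Big[\prod_{j\in N(i)}G_B(\eta_{k,n}^{\mathrm s,(j)})\,\mathbf 1_{\mathcal A_{i,n}}\Big].
\]
To bound the weights, use \(G_B(\eta^{\mathrm s,(j)}_{k,n})\le e^{\beta\|w\|_\infty U_{j,n}}\), where \(U_{j,n}\) is the number of locally isolated short clusters in \(Q_j\). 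Then apply H\"older's inequality with exponents \(2,2\) to the weights and the indicator. The weight factor is bounded uniformly, because \(\sup_{n,j}\mathbb E e^{tU_{j,n}}<\infty\) by Lemma~\ref{lem:isolated-factorial} and the product of at most \(3^d\) such factors is handled by a further H\"older step. The indicator factor is \(\mathbb P_n^{\mathrm s}(E_{i,n}>0)^{1/2}\). This tends to zero uniformly in \(i\), since \(\mathbb P(E_{i,n}>0)\le\mathbb E e^{E_{i,n}}-1\to0\) by Lemma~\ref{lem:short-localization}. Finally, \(a_n^{\mathrm s}\to a_B>0\) by Lemma~\ref{lem:local-hard-core-laplace}, so the prefactor \((a_n^{\mathrm s})^{-3^d}\) stays bounded. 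Together these give \(p_n:=\sup_i\widehat{\mathbb P}_n^{\mathrm s}(\mathcal A_{i,n})\to0\).

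\textbf{Simultaneous avoidance.} With a dependency graph of degree at most \(D=5^d\) and \(p_n\to0\), I would apply the symmetric quantitative Lov\'asz local lemma (as in \cite{moser-tardos2009}), taking \(x_i=x_n:=2p_n\). For large \(n\) this satisfies \(p_n\le x_n(1-x_n)^D\), which yields
\[
\widehat{\mathbb P}_n^{\mathrm s}(H_n)\ge(1-2p_n)^{\rho_{k,n}}.
\]
An equivalent route is the Liggett--Schonmann--Stacey domination of the finitely dependent indicators \(\mathbf 1_{\mathcal A_{i,n}^c}\) by a product Bernoulli field with success probability \(\pi_n\to1\). Either way, \(\rho_{k,n}^{-1}\log\widehat{\mathbb P}_n^{\mathrm s}(H_n)\ge\log(1-2p_n)\to0\), which gives the claimed limit. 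The only delicate point is the tilted error estimate: one must check that the neighbouring-block weights cost only a bounded factor. That is exactly what the uniform exponential moments of Lemma~\ref{lem:isolated-factorial} and the positivity of \(a_B\) provide.
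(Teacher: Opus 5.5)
Your proposal is correct and follows the paper's proof essentially step for step. It uses the same product structure of the tilt to get a dependency graph of bounded degree, and the same cancellation of factors outside the neighbourhood followed by Cauchy--Schwarz. It then uses the same uniform exponential moments from Lemma~\ref{lem:isolated-factorial} together with $a_n^{\mathrm s}\to a_B>0$ to obtain $p_n\to0$, and Lemma~\ref{lem:short-localization} for the exact equality on $H_n$.

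The only difference is the final step. You use the non-algorithmic Lov\'asz local lemma with $x=2p_n$, which gives the explicit bound $(1-2p_n)^{\rho_{k,n}}$. The paper's proof instead uses Liggett--Schonmann--Stacey product domination. Both rest on the same mutual-independence property of the block $\sigma$-fields. The local lemma is slightly more elementary, and it is the tool the paper's introduction announces.
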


\begin{proof}[Proof of Proposition~\ref{pr:hard-core-lower}]
Restrict the expectation defining \(Z_n^{\mathrm{hc}}\) to
\(F_n\cap H_n\). On \(F_n\), the full process equals the short
process, and \(F_n\) is independent of the short process by Poisson
thinning. On \(H_n\), Lemma~\ref{lem:hard-core-boundary-void}
identifies the global and blockwise short measures. Their common Gibbs
weight is the product of the block weights \(G_B\). Therefore,
\(
Z_n^{\mathrm{hc}}
\ge\mathbb P(F_n)\,
\mathbb E_n^{\mathrm s}\Big[
\prod_iG_B(\eta_{k,n}^{\mathrm s,(i)})\mathbf{1}\{H_n\}\Big]
=\mathbb P(F_n)(a_n^{\mathrm s})^{\rho_{k,n}}
\widehat{\mathbb P}_n^{\mathrm s}(H_n).
\)
The normalized logarithm of \(\mathbb P(F_n)\) tends to zero by
\eqref{eq:cutoff-void-probability}. Lemma~\ref{lem:local-hard-core-laplace}
gives \(\log a_n^{\mathrm s}\to\log a_B\), and
Lemma~\ref{lem:hard-core-boundary-void} gives zero for the remaining
normalized logarithm. Taking the lower limit proves
\(\liminf_n\rho_{k,n}^{-1}\log Z_n^{\mathrm{hc}}
\ge\log a_B\).
\end{proof}

\begin{proof}[Proof of Lemma~\ref{lem:local-hard-core-laplace}]
Couple the full and short processes in \(Q_1\) by deleting the far
trajectories from the full process. Their number is Poisson with mean
\(n|Q_1|q_n=m_nq_n\). If this number is zero, the two restrictions
coincide, and their block cluster measures agree, including their
isolation indicators and centered shapes. Writing
\(\epsilon_n:=d_{\mathrm{TV}}(
\mathcal L(\eta_{k,n}^{\mathrm s,(1)}),
\mathcal L(\zeta_k^{(1)}))\), the coupling inequality, the triangle
inequality, and Lemma~\ref{lem-equival} give
\(d_{\mathrm{TV}}\big(
\mathcal L(\eta_{k,n}^{\mathrm s,(1)}),
\mathcal L(\eta_{k,n}^{(1)})\big)
\le1-e^{-m_nq_n}\le m_nq_n,\) and thus
\(
\epsilon_n
\le m_nq_n+
d_{\mathrm{TV}}\big(\mathcal L(\eta_{k,n}^{(1)}),
\mathcal L(\zeta_k^{(1)})\big)\longrightarrow0.
\)
The first term tends to zero by \eqref{eq:range-cutoff}; the second
uses the full-block Poisson approximation. We now control the
possibly unbounded weight \(G_B\) under these converging laws.

Put \(V:=\|w\|_\infty\). For every finite point measure \(\nu\),
\(0\le G_B(\nu)^2\le e^{2\beta V\nu(E_k)}\).
Lemma~\ref{lem:isolated-factorial}, applied to the short process in
\(Q_1\), with \(\rho_{k,n}|Q_1|=1\), bounds the expectations of
this exponential uniformly in \(n\). One may use the positive
exponent \(2\beta V+1\), so the same argument includes \(V=0\).
For the limiting Poisson random measure,
\(\mathbb E e^{2\beta V\zeta_k^{(1)}(E_k)}
=\exp\{\tau_k(E_k)(e^{2\beta V}-1)\}<\infty\).
Choose \(C<\infty\) that bounds both
\(\mathbb E_n^{\mathrm s}G_B(\eta_{k,n}^{\mathrm s,(1)})^2\)
for all sufficiently large \(n\) and
\(\mathbb E G_B(\zeta_k^{(1)})^2\).
For \(M>0\), define \(G_{B,M}:=\min\{G_B,M\}\).
This is an evaluation-measurable function bounded by \(M\), since
\(w\) is bounded measurable and \(B\) is Borel.
The elementary inequality
\(0\le x-\min\{x,M\}\le x^2/M\), for \(x\ge0\), controls
the two truncation errors. The total variation bound controls the
expectations of the bounded truncation, giving
\begin{enumerate}
    \item
\(\mathbb E_n^{\mathrm s}
(G_B-G_{B,M})(\eta_{k,n}^{\mathrm s,(1)}) +\mathbb E(G_B-G_{B,M})(\zeta_k^{(1)})
\le C/M,\)
\item \(\big|a_n^{\mathrm s}-\mathbb E G_B(\zeta_k^{(1)})\big|
\le\big|\mathbb E_n^{\mathrm s}G_{B,M}(\eta_{k,n}^{\mathrm s,(1)})
-\mathbb E G_{B,M}(\zeta_k^{(1)})\big|+2C/M
\le2M\epsilon_n+2C/M.\)
\end{enumerate}
For fixed \(M\), the upper limit of the last expression as
\(n\to\infty\) is \(2C/M\). Letting \(M\to\infty\) proves
\(a_n^{\mathrm s}\to\mathbb E G_B(\zeta_k^{(1)})\).

To identify the limiting expectation, define the bounded nonnegative
function \(g:=e^{-\beta w}\mathbf{1}_{B^c}\) on \(E_k\).
For a finite point measure \(\nu\), the product of \(g\) over its
atoms, counted with multiplicity, equals \(G_B(\nu)\): a point in
\(B\) makes the product zero, and otherwise the product is
\(e^{-\beta\int w\,d\nu}\). The empty product is one.
The Poisson product formula \cite{last-penrose2018} therefore gives
\[
\begin{aligned}
\mathbb E G_B(\zeta_k^{(1)})
&=e^{-\tau_k(E_k)}
\sum_{j=0}^\infty\frac1{j!}
\Big(\int_{E_k}g\,d\tau_k\Big)^j
=e^{\int_{E_k}(g-1)\,d\tau_k}
&=e^{-\tau_k(B)
+\int_{B^c}(e^{-\beta w}-1)\,d\tau_k}=a_B.
\end{aligned}
\]
The exponent is finite because \(\tau_k\) is finite and \(w\)
is bounded, so \(a_B>0\). If \(\tau_k(E_k)=0\), the Poisson
measure is empty almost surely and the series equals one, in agreement
with the same formula. This proves the stated convergence and positivity.
\end{proof}

\begin{proof}[Proof of Lemma~\ref{lem:hard-core-boundary-void}]
We first locate the variables on which each error depends, then bound
its probability under the tilted law. This will allow us to apply
product domination to the indicators of error-free blocks. Finally,
the all-one event gives the required lower bound, while the earlier
localization estimate gives equality of the measures.
For each \(i\), let
\(\mathcal J_i:=\{j:Q_j\cap Q_i^{\mathrm s,+}\ne\varnothing\}\),
and let \(\mathcal F_{j,n}\) be the \(\sigma\)-field generated by
the short trajectories starting in \(Q_j\).
The event \(\mathcal A_{i,n}\) is measurable with respect to
\(\sigma(\mathcal F_{j,n}:j\in\mathcal J_i)\).
Since \(kh_n^{\mathrm s}<a_n\) for all sufficiently large \(n\),
the enlarged cube reaches at most one neighboring block in either
direction of each coordinate. Hence, \(|\mathcal J_i|\le J:=3^d\).
The same observation applies at the torus boundary, with block indices
read periodically.

The tilted density is a product of one factor for each original block.
Thus, the \(\sigma\)-fields \(\mathcal F_{j,n}\) remain independent
under \(\widehat{\mathbb P}_n^{\mathrm s}\), although the law within
each block changes. Join distinct indices \(i\) and \(i'\) when
\(\mathcal J_i\cap\mathcal J_{i'}\ne\varnothing\), and denote
this graph by \(\mathcal G_n\). Its degree is at most
\(D:=5^d-1\), because intersecting neighborhoods require the block
indices to differ by at most two in each coordinate. Moreover,
\(\mathcal A_{i,n}\) is independent of the entire collection of
error events indexed by nonneighbors: all their determining blocks
lie outside \(\mathcal J_i\). 

Lemma~\ref{lem:short-localization} gives
\(\sup_i\mathbb E_n^{\mathrm s}e^{E_{i,n}}\to1\).
Since \(E_{i,n}\) is a nonnegative integer,
\(\mathbf{1}\{\mathcal A_{i,n}\}
\le(e^{E_{i,n}}-1)/(e-1)\). Consequently,
\(\sup_i\mathbb P_n^{\mathrm s}(\mathcal A_{i,n})
\le(\sup_i\mathbb E_n^{\mathrm s}e^{E_{i,n}}-1)/(e-1)\to0\).
We must transfer this estimate to the tilted law, since an estimate
under the original law alone would not suffice.

Put \(V:=\|w\|_\infty\) and
\(M:=\sup_{n,j}\mathbb E_n^{\mathrm s}
e^{2\beta V\eta_{k,n}^{\mathrm s,(j)}(E_k)}\), with the supremum
over sufficiently large \(n\). The short-process intensity is bounded
by \(\Theta_n\), so Lemma~\ref{lem:isolated-factorial}, with anchor
set \(Q_j\), gives \(M<\infty\). This also covers \(V=0\), when
\(M=1\). Let \(c_B:=\min\{1,a_B/2\}>0\).
Lemma~\ref{lem:local-hard-core-laplace} ensures
\(a_n^{\mathrm s}\ge c_B\) for all sufficiently large \(n\).
In the tilted probability of \(\mathcal A_{i,n}\), each density
factor outside \(\mathcal J_i\) integrates to \(a_n^{\mathrm s}\)
and cancels against the normalizer. Using
\(G_B(\nu)\le e^{\beta V\nu(E_k)}\), Cauchy--Schwarz, and then
independence of the original blocks, we obtain
\[
\begin{aligned}
\widehat{\mathbb P}_n^{\mathrm s}(\mathcal A_{i,n})
&=(a_n^{\mathrm s})^{-|\mathcal J_i|}
\mathbb E_n^{\mathrm s}\Big[
\mathbf{1}\{\mathcal A_{i,n}\}
\prod_{j\in\mathcal J_i}G_B(\eta_{k,n}^{\mathrm s,(j)})\Big]
\le(a_n^{\mathrm s})^{-|\mathcal J_i|}
\mathbb P_n^{\mathrm s}(\mathcal A_{i,n})^{1/2}
\Big(\mathbb E_n^{\mathrm s}
 e^{2\beta V\sum_{j\in\mathcal J_i}
 \eta_{k,n}^{\mathrm s,(j)}(E_k)}\Big)^{1/2}\\
&=(a_n^{\mathrm s})^{-|\mathcal J_i|}
\sqrt{\mathbb P_n^{\mathrm s}(\mathcal A_{i,n})}
\prod_{j\in\mathcal J_i}
\sqrt{\mathbb E_n^{\mathrm s}
 e^{2\beta V\eta_{k,n}^{\mathrm s,(j)}(E_k)}}
\le c_B^{-J}M^{J/2}
\sqrt{\mathbb P_n^{\mathrm s}(\mathcal A_{i,n})}.
\end{aligned}
\]
The multiplier is independent of \(n\) and \(i\). Therefore,
\(p_n:=\sup_i\widehat{\mathbb P}_n^{\mathrm s}(\mathcal A_{i,n})\to0\).
In particular, the tilt preserves the vanishing of each local error
probability uniformly over the growing collection of blocks.

Set \(Y_{i,n}:=\mathbf{1}\{\mathcal A_{i,n}^c\}\).
Let \(N_n[i]\) be the closed neighborhood of \(i\) in
\(\mathcal G_n\), including \(i\) itself, and put
\(\mathcal H_{i,n}:=\sigma(Y_{j,n}:j\notin N_n[i])\).
By the dependency property proved above, \(Y_{i,n}\) is independent
of \(\mathcal H_{i,n}\). Hence, a.s.,
\[
\widehat{\mathbb P}_n^{\mathrm s}
 (Y_{i,n}=1\mid\mathcal H_{i,n})
=\widehat{\mathbb P}_n^{\mathrm s}(Y_{i,n}=1)
\ge1-p_n
\]
For a graph with vertex set \(S\), write
\(\nu_\vartheta^S\) for the product law of independent Bernoulli
variables with success probability \(\vartheta\).
We use \cite[Corollary~1.4]{liggettschonmannstacey1997}: on a graph of degree at most the fixed constant \(D\),
a \(\{0,1\}\)-valued field whose conditional success probabilities,
given the variables outside each closed neighborhood, are at least
\(1-p\) dominates \(\nu_{\vartheta_D(p)}^S\) for all sufficiently
small \(p\), where \(\vartheta_D(p)\to1\) as \(p\downarrow0\).

The graphs \(\mathcal G_n\) have the common degree bound \(D\),
and the preceding conditional estimate verifies the other hypothesis
with \(p=p_n\). Thus, when \(p_n>0\) and \(n\) is sufficiently
large, the law of \((Y_{i,n})_i\) dominates the Bernoulli product law
with success probability \(\vartheta_n:=\vartheta_D(p_n)\).
If \(p_n=0\), every error event has probability zero; since there
are finitely many blocks, \(H_n\) has probability one, and we set
\(\vartheta_n=1\). In both cases \(\vartheta_n\to1\).
The event \(H_n=\{Y_{i,n}=1\text{ for every }i\}\) is increasing,
so
\(\widehat{\mathbb P}_n^{\mathrm s}(H_n)
\ge\nu_{\vartheta_n}^{\{1,\ldots,\rho_{k,n}\}}
 (Y_i=1\text{ for every }i)
=\vartheta_n^{\rho_{k,n}},\)
and therefore
\(\frac1{\rho_{k,n}}
\log\widehat{\mathbb P}_n^{\mathrm s}(H_n)
\ge\log\vartheta_n\longrightarrow0.
\)
This proves the probability assertion. On \(H_n\), all
\(E_{i,n}\) vanish, so Lemma~\ref{lem:short-localization} gives
\(\rho_{k,n}d_{\mathrm{TV}}(\xi_{k,n}^{\mathrm s},
\eta_{k,n}^{\mathrm s})\le\sum_iE_{i,n}=0\).
The two finite measures are therefore equal, which proves the remaining
assertion.
\end{proof}

\section{Numerical examples and simulations}
\label{sec:ex}

This section connects the variational formula in Corollary~\ref{cor:free}
with a numerical sampling procedure. We first identify the finite intensity
measure selected by the free-energy problem and its normalized cluster-shape
law. We then describe a Metropolis--Hastings implementation and illustrate the
resulting model for OU-Brownian trajectories with an anisotropic interaction.

\subsection{The Gibbs optimizer and its normalized shape law}
\label{ss:opt}

The free-energy variational problem is posed over finite measures on the
cluster state space \(E_k\), so its optimizer need not have mass one. Its
normalization nevertheless gives the probability law needed to sample the
shape of a typical cluster. The following calculation records both objects and
fixes the notation used in the remainder of the section.

Represent a cluster by
\[
z=(\gamma_1,x_2,\gamma_2,\ldots,x_k,\gamma_k),
\qquad x_1=0,
\]
where \(\gamma_1,\ldots,\gamma_k\) are displacement trajectories and
\(x_2,\ldots,x_k\) are relative spatial offsets. If the microscopic
interaction is \(v\), the induced cluster energy is
\[
W(z):=\int_0^1
v\big(\gamma_1(t),x_2+\gamma_2(t),\ldots,x_k+\gamma_k(t)\big)
\,\mathrm dt.
\]

\begin{lemma}[Gibbs optimizer]
\label{lem:gibbs-opt}
Let \(W:E_k\to\mathbb R\) be bounded and measurable, as in
Corollary~\ref{cor:free}. The unique minimizer in that corollary is the finite
measure
\[
\mu_\beta^{\mathrm{gc}}(\mathrm dz)
=e^{-\beta W(z)}\tau_k(\mathrm dz).
\]
Its mass and normalized cluster-shape law are
\[
Z_k(\beta):=\int_{E_k}e^{-\beta W(z)}\tau_k(\mathrm dz),
\qquad
\pi_\beta(\mathrm dz):=Z_k(\beta)^{-1}e^{-\beta W(z)}\tau_k(\mathrm dz).
\]
Moreover,
\[
\lim_{n\to\infty}\frac{1}{\rho_{k,n}}
\log\mathbb E\exp\big\{-\rho_{k,n}\beta H_W(\PP_n)\big\}
=\int_{E_k}\big(e^{-\beta W}-1\big)\,\mathrm d\tau_k.
\]
\end{lemma}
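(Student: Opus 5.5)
The plan is to reduce everything to a pointwise minimization of the entropy functional, exactly as in the proof of Theorem~\ref{thm:hard-core-free-energy} but without a forbidden set. By Corollary~\ref{cor:free}, the limit on the left equals \(-\inf_{\mu\in M_f(E_k)}\{\beta\int W\,d\mu+h(\mu\mid\tau_k)\}\). Measures with \(\mu\not\ll\tau_k\) have infinite entropy and can be discarded, since the value at \(\mu=\tau_k\) is finite (\(\tau_k\) is a finite measure and \(W\) is bounded). For \(\mu=f\tau_k\) with \(f\ge0\) measurable, the functional becomes
\[
\int_{E_k}\big(\beta Wf+f\log f-f+1\big)\,d\tau_k .
\]

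The key step is to minimize the integrand pointwise in \(f\) for fixed \(z\). The function \(\phi_z(x):=\beta W(z)x+x\log x-x+1\) on \([0,\infty)\), with \(0\log0=0\), is strictly convex, and \(\phi_z'(x)=\beta W(z)+\log x\). Hence its unique minimizer is \(x=e^{-\beta W(z)}\), and the minimum value is \(1-e^{-\beta W(z)}\).

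It follows that for every admissible \(f\) the integral is at least \(\int(1-e^{-\beta W})\,d\tau_k\). Equality holds if and only if \(f=e^{-\beta W}\) \(\tau_k\)-almost everywhere, because the integrand difference is nonnegative and vanishes only there. This choice is admissible: \(e^{-\beta W}\le e^{\beta\|W\|_\infty}\) makes \(\mu_\beta^{\mathrm{gc}}\) finite. Uniqueness as an element of \(M_f(E_k)\) follows because two densities equal \(\tau_k\)-a.e. define the same measure. The infimum is therefore \(\int(1-e^{-\beta W})\,d\tau_k\), and inserting this into Corollary~\ref{cor:free} gives the stated limit \(\int(e^{-\beta W}-1)\,d\tau_k\).

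It remains to check the normalization. \(Z_k(\beta)\) is finite because \(Z_k(\beta)\le e^{\beta\|W\|_\infty}\tau_k(E_k)\). It is strictly positive whenever \(\tau_k(E_k)>0\), since \(e^{-\beta W}>0\); this positivity holds because the connection event has positive Lebesgue measure in the offsets, e.g.\ for small \(|y|\) and bounded paths. Consequently \(\pi_\beta\) is a well-defined probability measure.

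The only real subtlety is the measurability and integrability bookkeeping when passing from the pointwise bound to the integral. This is handled by the lower bound \(\phi_z\ge1-e^{-\beta W(z)}\ge1-e^{\beta\|W\|_\infty}\), which makes the negative part of the integrand \(\tau_k\)-integrable for every \(f\).
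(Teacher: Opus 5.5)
Your proposal is correct and follows the paper's proof. Both write $\mu=f\tau_k$, minimize the strictly convex integrand $\beta Wf+f\log f-f+1$ pointwise at $f=e^{-\beta W}$ with minimum value $1-e^{-\beta W}$, and substitute into Corollary~\ref{cor:free}. Your extra bookkeeping (discarding $\mu\not\ll\tau_k$, the uniform lower bound on the integrand, uniqueness up to $\tau_k$-null sets, and $Z_k(\beta)>0$) fills in details the paper leaves implicit. For the last of these, $\tau_k(E_k)>0$ already follows from the diameter condition at $t=0$: there every path sits at its offset, so all offsets with $|y_j|\le L/2$ qualify and no assumption on the paths is needed.
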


\begin{proof}
Write \(\mu=f\tau_k\). The functional in Corollary~\ref{cor:free} is
\(
\int_{E_k}\big(\beta Wf+f\log f-f+1\big)\,\mathrm d\tau_k.
\)
For each fixed \(z\), the integrand is strictly convex as a function of
\(f(z)>0\), and its derivative is \(\beta W(z)+\log f(z)\). Hence, its unique
minimum is attained at \(f(z)=e^{-\beta W(z)}\). Substitution gives the stated
free-energy identity, and normalization gives \(\pi_\beta\).
\end{proof}

The measure \(\mu_\beta^{\mathrm{gc}}\) specifies both the total cluster
intensity \(Z_k(\beta)\) and the normalized shape law \(\pi_\beta\). A natural
effective grand-canonical sampler first draws \(N\sim\operatorname{Poisson}
(Z_k(\beta))\) and then draws \(N\) independent shapes from \(\pi_\beta\). A
fixed-count sampler instead prescribes \(N\) and draws the same cluster-shape
law. This Poisson construction is the sampling model associated with the
variational optimizer; Corollary~\ref{cor:free} alone is not asserted to prove
convergence of the entire tilted point process.

\subsection{Metropolis--Hastings sampling}
\label{ss:mcmc}

The normalizing constant \(Z_k(\beta)\) is not needed to sample from
\(\pi_\beta\). It is enough to construct a proposal kernel on configurations
that satisfy the cluster diameter condition and compare the unnormalized
weights \(e^{-\beta W}\). In the ordered anchored coordinates used below, the
proposal is reversible with respect to the Lebesgue--Gaussian reference
measure whose restriction and pushforward define \(\tau_k\). The constant
symmetry factor therefore cancels together with the Hastings correction.
More generally, a nonreversible proposal must include the corresponding
proposal-density or Radon--Nikodym ratio. For a proposal kernel that is
reversible with respect to \(\tau_k\), the acceptance probability reduces to
\[
\alpha(z,z')
=\min\big\{1,\exp[-\beta(W(z')-W(z))]\big\}.
\]
If the proposed configuration fails the cluster diameter condition, it is
rejected before this acceptance test.

\subsection{OU--Brownian cluster model}
\label{ss:ou-brownian-simulation}

We now specialize the sampler to planar stochastic trajectories with an
Ornstein--Uhlenbeck velocity process and an additional Brownian positional
component. The numerical model combines persistent motion along a preferred
field direction, an anisotropic pair interaction, and Brownian positional
fluctuations. The inverse temperature is absorbed into dimensionless
coefficients so that it is applied exactly once in the Metropolis ratio.

The velocity dynamics can be motivated by Newton's law with a constant
driving force in the field direction, linear drag, and stochastic forcing.
Let \(V_i(t)\in\mathbb R^d\) denote the velocity of particle \(i\), let
\(\widehat b\) be the unit field direction, and write
\[
m\,dV_i(t)
=
\bigl(F\widehat b-\gamma V_i(t)\bigr)\,dt
+\eta\,dB_i^V(t),
\]
where \(m\) is the particle mass, \(F>0\) is the magnitude of the effective
driving force experienced by an individual particle along the field, \(\gamma>0\) is a drag coefficient, and
\(B_i^V\) is a standard \(d\)-dimensional Brownian motion. The deterministic
terms represent driving along the field and linear drag opposing the particle
velocity, respectively. Rearranging the deterministic part gives
\[
m\,dV_i(t)
=
-\gamma
\left(
V_i(t)-\frac{F}{\gamma}\widehat b
\right)dt
+\eta\,dB_i^V(t).
\]
Introducing the effective parameters
\[
\kappa:=\frac{\gamma}{m},
\qquad
v_0:=\frac{F}{\gamma},
\qquad
\sigma:=\frac{\eta}{m},
\]
the velocity process becomes
\begin{align}
dV_i(t)
&=
\kappa\bigl(v_0\widehat b-V_i(t)\bigr)\,dt
+\sigma\,dB_i^V(t).
\label{eq:ou-velocity}
\end{align}
Thus, \(v_0\widehat b\) is the stationary mean velocity,
\(\kappa^{-1}\) is the velocity relaxation time, and \(\sigma\) controls
the fluctuations around the mean. The process is initialized in its
stationary distribution,
\[
V_i(0)
\sim
N\left(
v_0\widehat b,
\frac{\sigma^2}{2\kappa}I_d
\right).
\]

The particle position additionally contains an independent Brownian
component. Writing \(B_i^X\) for another standard \(d\)-dimensional Brownian
motion, independent of \(B_i^V\), the displacement satisfies
\begin{align}
dX_i(t)
&=
V_i(t)\,dt
+\sqrt{2D}\,dB_i^X(t),
\qquad
X_i(0)=0,
\label{eq:ou-brownian-position}
\end{align}
where \(D>0\) is the positional diffusion coefficient. For fixed
\(v_0,\sigma,\kappa\), and \(D\), these dynamics define the reference path
law \(\mathbb Q\).

On a grid
\[
0=t_0<t_1<\cdots<t_M=T,
\qquad
\Delta t_\ell:=t_{\ell+1}-t_\ell,
\]
the Ornstein--Uhlenbeck transition can be sampled exactly. Setting
\[
a_\ell:=e^{-\kappa\Delta t_\ell},
\]
gives
\[
V_i^{\ell+1}
=
v_0\widehat b
+
a_\ell\bigl(V_i^\ell-v_0\widehat b\bigr)
+
\frac{\sigma}{\sqrt{2\kappa}}
\sqrt{1-a_\ell^2}\,Z_{i,\ell}^V,
\qquad
Z_{i,\ell}^V\sim N(0,I_d).
\]
We sample the velocity transition exactly and approximate the integrated velocity over each time step by the trapezoidal rule.
The resulting position update is
\[
X_i^{\ell+1}-X_i^\ell
=
\frac12\bigl(V_i^\ell+V_i^{\ell+1}\bigr)\Delta t_\ell
+
\sqrt{2D\Delta t_\ell}\,Z_{i,\ell}^X,
\qquad
Z_{i,\ell}^X\sim N(0,I_d),
\]
where the Gaussian variables \(Z_{i,\ell}^V\) and
\(Z_{i,\ell}^X\) are independent.
For a cluster
\[
z=(X_1,x_2,X_2,\ldots,x_k,X_k),
\qquad
x_1=0,
\]
define the local frame velocity and its direction by
\[
U_i^\ell
:=
\frac{X_i^{\ell+1}-X_i^\ell}{\Delta t_\ell},
\qquad
\widehat u_i^\ell
:=
\frac{U_i^\ell}{\lVert U_i^\ell\rVert}
\]
whenever \(\lVert U_i^\ell\rVert>0\), and set
\(\widehat u_i^\ell=0\) otherwise. Thus, \(\widehat u_i^\ell\) is computed
directly from the realized displacement over one frame.

\subsubsection{Dimensionless energy}

The numerical implementation uses the dimensionless energy
\(\mathcal E_\beta(z):=\beta W(z)\). This convention separates the physical
energy from the global inverse temperature while allowing the code to work
with effective coefficients. We decompose
\(\mathcal E_\beta=\mathcal E_{\mathrm{field}}+\mathcal E_{\mathrm{pair}}\),
and write \(\beta_{\mathrm{field}}=\beta h\) and
\(\beta_{\mathrm{pair}}=\beta\epsilon_{\mathrm{pair}}\). The field term is
\[
\mathcal E_{\mathrm{field}}(z)
=
-\sum_{\ell=0}^{M-1}\Delta t_\ell
\sum_{i=1}^k
\beta_{\mathrm{field}}
\widehat u_i^\ell\cdot\widehat b.
\]
For the pair term, define the midpoint position
\[
\overline x_i^\ell
:=
x_i+\frac12\bigl(X_i^\ell+X_i^{\ell+1}\bigr),
\qquad 1\le i\le k,
\]
and, for \(i<j\), let
\[
r_{ij}^\ell
:=
\lVert\overline x_j^\ell-\overline x_i^\ell\rVert,
\qquad
\widehat r_{ij}^\ell
:=
\frac{\overline x_j^\ell-\overline x_i^\ell}{r_{ij}^\ell}
\]
when \(r_{ij}^\ell>0\). The radial cutoff and directional cone are given by
\(f_{\mathrm{rad}}(r):=\mathbf 1\{0\le r\le r_{\max}\}\) and
\(w_{\mathrm{cone}}(\widehat r)
:=\mathbf 1\{|\widehat r\cdot\widehat b|\ge\cos\theta_0\}\),
respectively. The pair energy is then
\begin{align}
\mathcal E_{\mathrm{pair}}(z)
&=
-\sum_{\ell=0}^{M-1}\Delta t_\ell
\sum_{1\le i<j\le k}
\beta_{\mathrm{pair}}
f_{\mathrm{rad}}(r_{ij}^\ell)
w_{\mathrm{cone}}(\widehat r_{ij}^\ell)
\left[
2(\widehat u_i^\ell\cdot\widehat b)
(\widehat u_j^\ell\cdot\widehat b)
-
\widehat u_i^\ell\cdot\widehat u_j^\ell
\right].
\label{eq:pair-energy}
\end{align}
The interaction therefore favors motion along the field and head-to-tail
alignment of nearby particles whose separation is close to the field axis.
\subsubsection{Initialization and proposal moves}

The numerical implementation represents each trajectory through the Gaussian
random variables used to generate the Ornstein--Uhlenbeck velocity and
Brownian positional components. The initial state is obtained by setting the
first offset to \(x_1=0\), drawing the remaining offsets close to the origin,
and independently sampling these Gaussian variables from their reference
distributions. The corresponding velocities and positions are then
constructed from the dynamics above. The initial state need not be an exact
draw from the target law, and its influence is removed through burn-in.

Figure~\ref{fig:initial_realisation} shows one initial state. The figure is
included only to illustrate the geometry of the initialization.

\begin{figure}[ht]
\centering
\begin{subfigure}[t]{0.4\textwidth}
    \centering
    \includegraphics[width=0.9\textwidth]{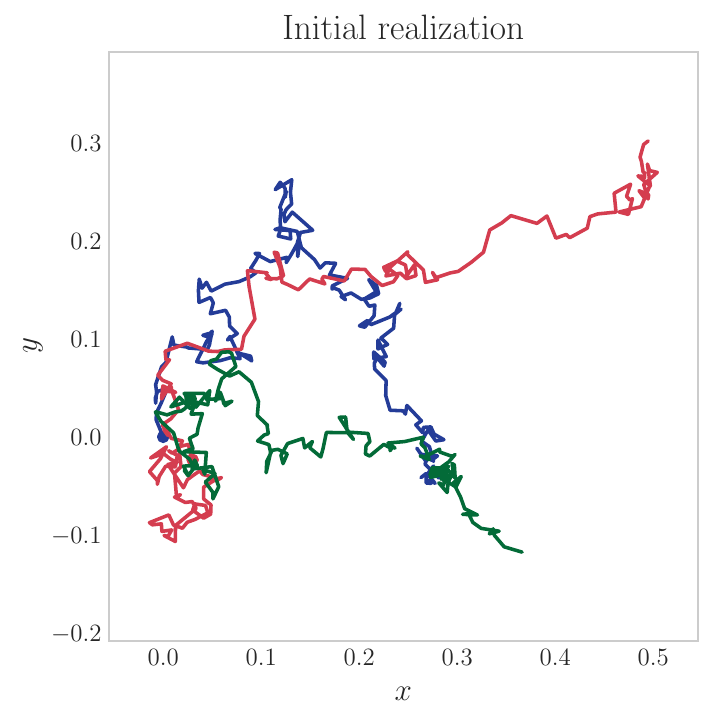}
    \caption{One initial realization of the connected cluster trajectories.}
    \label{fig:initial_realisation}
\end{subfigure}
\begin{subfigure}[t]{0.4\textwidth}
    \centering
    \includegraphics[width=0.9\textwidth]{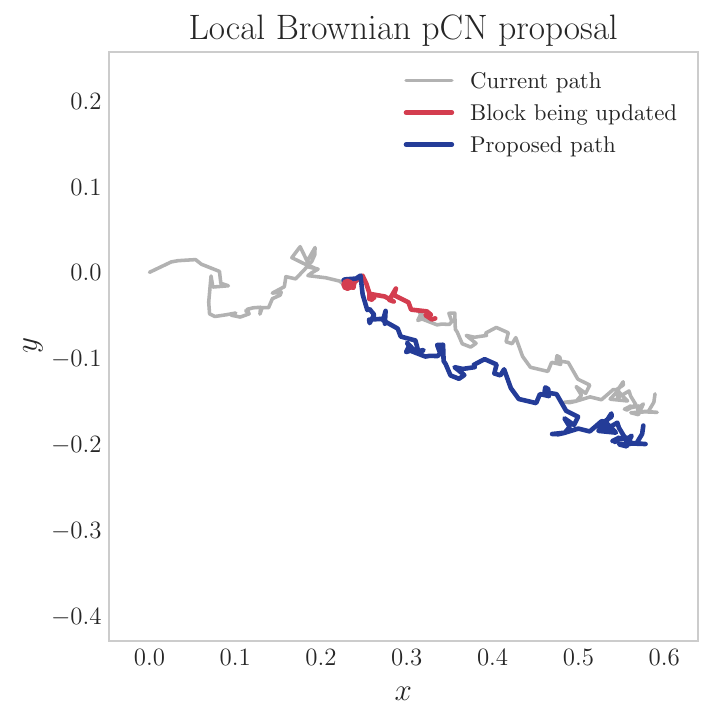}
    \caption{Local Brownian pCN proposal updating a consecutive block of Brownian increments.}
    \label{fig:brownian_bridge_proposal}
\end{subfigure}
\caption{Illustration of the initialization and path proposal used in the Metropolis--Hastings sampler.}
\label{fig:mcmc_initialization_proposal}
\end{figure}

At each Metropolis step, one of five proposal types is selected. Let
\(Z_i^V\) and \(Z_i^X\) collect the standard-Gaussian random variables used
to generate, respectively, the Ornstein--Uhlenbeck velocity process and the
Brownian positional increments of particle \(i\).

The Gaussian variables are updated using a preconditioned Crank--Nicolson
(pCN) proposal. For a current variable \(Z\), the proposal has the form
\[
Z'
=
\rho_{\mathrm{pCN}} Z
+
\sqrt{1-\rho_{\mathrm{pCN}}^2}\,\Xi,
\qquad
\Xi\sim N(0,I),
\]
where \(\Xi\) is independent of \(Z\) and
\(\rho_{\mathrm{pCN}}\in(0,1)\) controls how strongly the proposal is
correlated with the current value. In particular, if \(Z\sim N(0,I)\), then
\(Z'\sim N(0,I)\), since \(Z'\) is Gaussian with covariance
\[
\rho_{\mathrm{pCN}}^2 I
+
\bigl(1-\rho_{\mathrm{pCN}}^2\bigr)I
=
I.
\]
Thus, the pCN update preserves the standard-Gaussian reference distribution.

\begin{itemize}[label=\textbullet]
    \item \textbf{Whole OU move.}
    A particle \(i\) is selected and all Gaussian variables in \(Z_i^V\)
    are updated simultaneously using the pCN rule. The velocity and position
    trajectories of that particle are then reconstructed.

    \item \textbf{Whole Brownian move.}
    A particle \(i\) is selected and all Gaussian variables in \(Z_i^X\),
    which determine its Brownian positional increments, are updated using the
    pCN rule.

    \item \textbf{Local OU move.}
    A particle and a consecutive block of entries in \(Z_i^V\) are selected.
    Only the variables in this block are updated using the pCN rule, while
    all remaining entries are left unchanged.

    \item \textbf{Local Brownian move.}
    A particle and a consecutive block of Brownian increments are selected.
    The corresponding entries of \(Z_i^X\) are updated using the pCN rule,
    while all remaining entries are left unchanged.

    \item \textbf{Offset move.}
    A particle \(i\in\{2,\ldots,k\}\) is selected and its relative offset is
    proposed according to
    \[
    x_i'=x_i+\sigma_xG,
    \qquad
    G\sim N(0,I_d).
    \]
\end{itemize}

The pCN proposals are reversible with respect to the Gaussian reference
distribution, while the offset proposal is symmetric with respect to
Lebesgue measure. Their mixture is therefore reversible with respect to the
Lebesgue--Gaussian reference measure defining the trajectory law. After each
proposal, the velocities and positions are reconstructed from the updated
Gaussian variables. A proposal that fails the cluster diameter condition is
rejected. Otherwise, the Metropolis acceptance probability is
\[
\alpha(z,z')
=
\min\bigl\{
1,
\exp[-(\mathcal E_\beta(z')-\mathcal E_\beta(z))]
\bigr\}.
\]

\subsubsection{Parameters and simulation output}

Unless stated otherwise, the simulations use the parameters listed in
Table~\ref{tab:simulation_parameters} in the appendix. In all figures below,
\(k=3\).

Figure~\ref{fig:simulation_varying_D} compares four diffusion constants, $D$. The
trajectory insets show that larger \(D\) permits larger excursions, while the
empirical local-speed distributions shift towards larger speeds and become
broader with longer upper tails. In particular, changing \(D\) also changes
the shape of the local-speed distribution, from a more concentrated and
approximately symmetric distribution to a more right-skewed distribution.

\begin{figure}
\centering
\begin{subfigure}[t]{\textwidth}
    \centering
    \includegraphics[width=\textwidth]{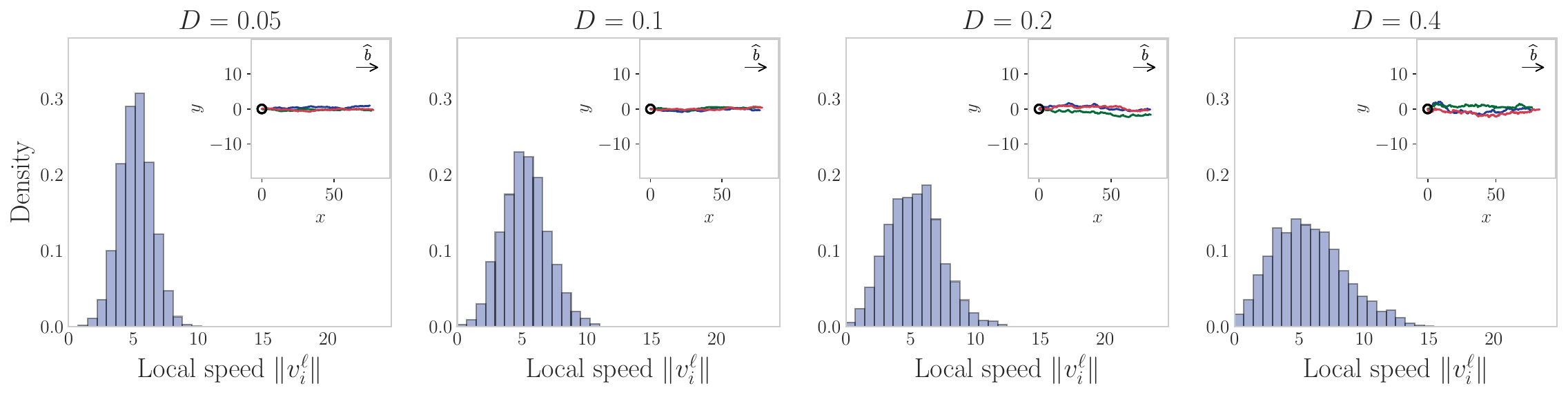}
    \caption{Different diffusion constants \(D\).}
    \label{fig:simulation_varying_D}
\end{subfigure}

\vspace{0.8em}

\begin{subfigure}[t]{\textwidth}
    \centering
    \includegraphics[width=\textwidth]{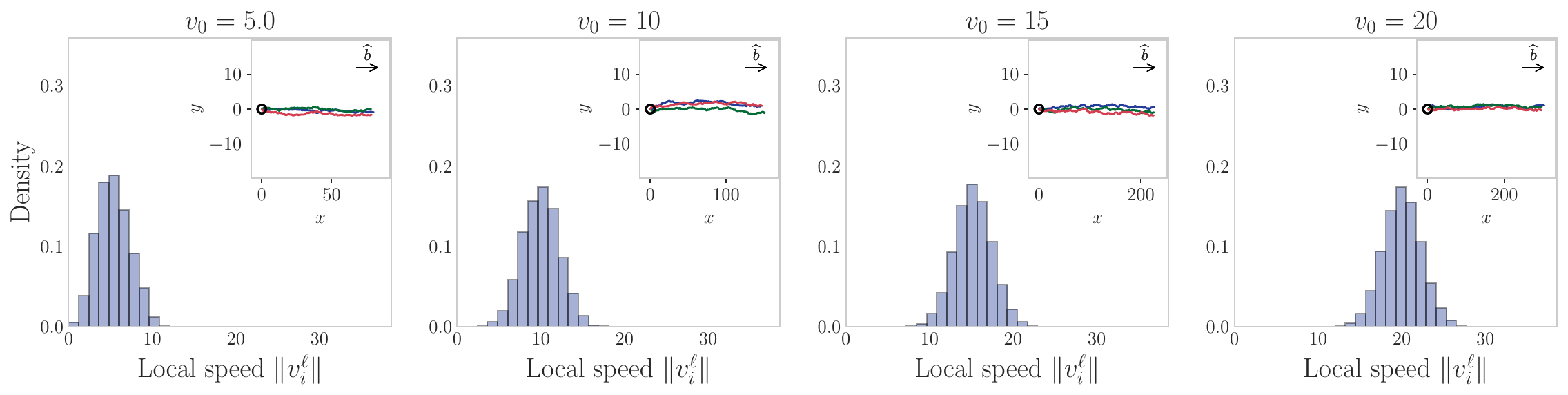}
    \caption{Different characteristic velocities \(v_0\).}
    \label{fig:simulation_varying_v0}
\end{subfigure}

\caption{
Simulated local-speed distributions for clusters of size \(k=3\). Panel
\textup{(a)} varies the diffusion constant \(D\), using a baseline
characteristic velocity of \(v_0=5\). Panel \textup{(b)} varies the
characteristic velocity \(v_0\), with the diffusion constant fixed at
\(D=0.15\). The insets show the corresponding simulated trajectories.
Within each inset, different colors correspond to different particles,
the open circle marks the starting point of one trajectory, and the arrow
indicates the magnetic-field direction.
}
\label{fig:simulation_parameter_comparison}
\end{figure}

Figure~\ref{fig:simulation_varying_v0} varies the characteristic velocity
\(v_0\), which is the stationary mean velocity in the field direction.
Increasing \(v_0\) shifts the empirical local-speed distribution towards
larger speeds. Together with Figure~\ref{fig:simulation_varying_D}, this shows
that both \(v_0\) and \(D\) affect the velocity distribution, while \(D\) also
changes its shape. The associated trajectory insets show the corresponding
geometric variability under the connectivity constraint. Compared with the initial realization in Figure~\ref{fig:initial_realisation},
the retained trajectories are more strongly aligned with the field direction
and therefore follow straighter paths.

\section{Application to experimental trajectory data}
\label{sec:real-data}

The preceding sections develop a large deviation framework for stochastic
clusters and a simulation model derived from its variational structure. We
now turn to a first application of this framework. Experiments with micro-
and nanoparticles can be costly and time consuming, particularly when several
experimental parameters are varied simultaneously. If the objective is to
understand how particle properties and external forcing influence the
resulting collective motion, an exhaustive experimental exploration of the
parameter space can therefore be impractical.

A natural long-term objective is to use the stochastic model as a tool for
virtual materials design. Rather than testing every parameter configuration
experimentally, one may first explore a large collection of configurations
numerically and then restrict laboratory experiments to those conditions that
appear most informative or promising. Related inverse-design approaches based
on statistical-physics models have been developed for self-assembling
materials \cite{miskin:etal:2016}, including approaches that use large
deviation theory to design nonequilibrium colloidal assembly
\cite{das:limmer:2021}. Here we investigate whether the cluster model
developed above can provide a useful starting point for such a strategy in the
setting of magnetically driven micromotors.

Our purpose in this section is deliberately modest. We perform a pilot study
in which the stochastic cluster model is calibrated against experimental
trajectory data and examine how the resulting effective parameters vary with
the physical experimental conditions. Already at this level, the fitted
characteristic velocity follows a clear systematic pattern across particle
sizes and forcing conditions, while the remaining calibrated parameters
exhibit less regular behavior. Thus, the analysis should be viewed as a proof
of concept for using simulation to explore experimentally relevant parameter
regimes rather than as a complete virtual materials design procedure.

The connection with the large deviation theory is through the variational
problem of Corollary~\ref{cor:free}. For a bounded cluster energy \(W\), its
minimizer is \(\mu_\beta^{\mathrm{gc}}=e^{-\beta W}\tau_k\), whose
normalization \(\pi_\beta\) gives the Gibbs law of a typical cluster shape.
In Section~\ref{sec:ex}, this normalized law provides the target distribution
for Metropolis--Hastings sampling and is specialized to Ornstein--Uhlenbeck
velocity dynamics with Brownian positional fluctuations, together with field
and pair interaction terms.

The experimental data consist of microscopy movies of polystyrene-based
particles coated with magnetic nanoparticles and driven by external magnetic
fields. Particle tracking yields, for each movie, a finite collection of
planar trajectories observed over a common time interval. The particles have
diameters \(4,10,20\), and \(40\,\mu\mathrm m\), and the experimental
conditions vary both the particle concentration and the strength of the
external magnetic forcing. For each condition, movies consisting of \(300\)
frames were recorded at \(16.67\) frames per second using a \(40\times\)
objective. Particle trajectories were reconstructed with the TrackMate plugin
in Fiji
\cite{ershov:phan:pylvanainen:etal:2022,schindelin:argandacarreras:etal:2012}.

The comparison uses two experimental inputs. The observed chain-length
frequencies determine the mixture weights over cluster sizes, while the
trajectory data determine the empirical velocity distributions and the
reference scales used in the calibration. For each experimental condition, we
sample the stochastic cluster law for the relevant cluster sizes, combine the
resulting velocity distributions according to the observed chain-length
frequencies, and calibrate the remaining effective parameters by comparison
with the experimental velocity distribution. Thus, the data analysis examines
whether the stochastic model motivated by the large deviation variational
principle can reproduce the observed experimental velocity distributions; it
does not constitute an empirical test of the large
deviation principle itself.

The experimental protocol, including particle assembly, magnetic
characterization, imaging, and tracking details, is collected in
Appendix~\ref{app:experimental-protocol}. Background on magnetically driven
micro- and nanorobots can be found in
\cite{zhou:mayorgamartinez:pane:zhang:pumera:2021}; related micromotor
locomotion experiments are described in
\cite{dediosandres:ramosdocampo:qian:stingaciu:stadler:2021}.

\subsection{Preprocessing and mixture model}

For the comparison with the stochastic cluster model, we focus on the
low-concentration, or \(\times1\), measurements. Each experimental condition
is specified by the particle diameter \(s\) and the applied magnetic forcing
\(F\). The calibration uses the observed velocity measurements together with
the empirical frequencies of the different chain lengths.

For a fixed condition \((s,F)\), let
\(
\{v^{\mathrm{exp}}_1,\ldots,v^{\mathrm{exp}}_{N_{s,F}}\}
\)
denote the corresponding collection of experimentally measured velocities.
These data are used in the model fitting. Since higher-order statistics, and
in particular skewness, can be strongly affected by isolated extreme
observations, an upper-tail cleaning step was applied separately within each
experimental condition. More precisely, for each condition we defined
\[
c_{s,F}
=
\min\big\{
m_{s,F}+8\times1.4826\,\mathrm{MAD}_{s,F},
q_{0.995,s,F}
\big\},
\]
where \(m_{s,F}\) is the sample median,
\(\mathrm{MAD}_{s,F}\) is the median absolute deviation, and
\(q_{0.995,s,F}\) is the empirical \(99.5\%\) quantile. The median absolute
deviation is a standard robust measure of scale
\cite{rousseeuw:croux:1993}. The factor \(1.4826\) is the usual
normal-consistency correction: for a Gaussian distribution,
\(\mathrm{MAD}=\Phi^{-1}(0.75)\sigma\approx0.6745\sigma\), so
\(1.4826\,\mathrm{MAD}\) estimates the standard deviation. This normalization
has also been used in motility analysis \cite{prummer:etal:2013}, while
conservative thresholds of eight MADs have been used in applied data
preprocessing \cite{tan:etal:2024}. Thus, the MAD component of the threshold
can be viewed as a robust analogue of an eight-standard-deviation upper-tail
cutoff.

Observations exceeding \(c_{s,F}\) were excluded from the calibration. As a
safeguard, no observations were removed if the resulting sample for a given
condition would contain fewer than ten velocity measurements. The same cutoff
was applied to the simulated velocity samples before comparison with the
experimental data. Since the \(99.5\%\) quantile may be more restrictive than
the MAD component, the cleaning rule can affect precisely the extreme upper
tail that also influences skewness. The comparisons involving skewness and
tail behavior should therefore be interpreted with this preprocessing choice
in mind.

The experimental samples contain chains with different numbers of particles.
This creates an additional issue because the large deviation model is
formulated for a fixed cluster size \(k\). We therefore represent the observed
velocity distribution as a mixture of the simulated distributions for the
different experimentally observed chain lengths. More precisely, we set
\[
P^{\mathrm{sim}}_{s,F}
=
\sum_{K\in\mathcal K_{s,F}}
p_{K,s,F}P^{\mathrm{sim}}_{K,s,F},
\]
where \(\mathcal K_{s,F}\) denotes the set of chain lengths observed under
condition \((s,F)\), \(P^{\mathrm{sim}}_{K,s,F}\) denotes the velocity
distribution generated by the stochastic cluster model for a cluster
containing \(K\) particles, and \(p_{K,s,F}\) denotes the experimentally
observed frequency of chains of length \(K\) under condition \((s,F)\). As
shown in Figure~\ref{fig:supp-micromotor-low}, the vast majority of chains in
the dilute setting have length at most four. All experimentally observed chain
lengths are nevertheless included in the numerical mixture.

\subsection{Calibration of the stochastic cluster model}

We next describe the calibration of the stochastic cluster model. Rather than
calibrating a small collection of summary statistics, the present procedure
compares the complete experimental and simulated velocity distributions. This
allows differences in the overall distributional shape to enter directly into
the fitting criterion.

For each experimentally observed chain length, cluster configurations are
sampled using the MCMC procedure described in
Section~\ref{ss:ou-brownian-simulation}. The Markov chain is restricted to
connected cluster configurations, and samples retained after burn-in and
thinning are used to construct the simulated velocity distributions. These
distributions are combined using the empirical chain-length weights described
above.

For a candidate parameter vector \(\vartheta\), let
\(\widehat F^{\mathrm{sim}}_{s,F,\vartheta}\) denote the empirical cumulative
distribution function of the resulting simulated mixture, and let
\(\widehat F^{\mathrm{exp}}_{s,F}\) denote the corresponding empirical
cumulative distribution function of the cleaned experimental velocities. The
calibration criterion is the two-sample Kolmogorov--Smirnov distance
\[
D_{\mathrm{KS},s,F}(\vartheta)
=
\sup_{u}
\left|
\widehat F^{\mathrm{exp}}_{s,F}(u)
-
\widehat F^{\mathrm{sim}}_{s,F,\vartheta}(u)
\right|.
\]
The fitted parameter vector is selected by minimizing
\(D_{\mathrm{KS},s,F}(\vartheta)\). Thus, the calibration compares the
experimental and simulated distributions directly rather than matching a
prespecified set of moments. The empirical mean, standard deviation, and
skewness are retained as descriptive diagnostics but do not enter the fitting
criterion.

The flexibility of the stochastic cluster model creates a potential
overparametrization problem because several model parameters can influence the
velocity distribution in similar ways. We therefore restrict the parameter
ranges before performing the numerical search. The global modeling parameters
are fixed at
\[
\beta=10,
\qquad
h_{\mathrm{scale}}=4,
\qquad
L=4s,
\qquad
r_{\max}=1.5s,
\qquad
\theta_0=\frac{\pi}{6},
\]
where \(s\) denotes the particle diameter. The field parameter is consequently
set to \(h=h_{\mathrm{scale}}F=4F\). The Ornstein--Uhlenbeck relaxation rate
is fixed at
\[
\kappa=1.8\,\mathrm{s}^{-1}.
\]
These quantities are held fixed across all experimental conditions and are
not interpreted as condition-specific estimates of microscopic physical
constants.

As before, the remaining parameters are shared by all cluster sizes within a
fixed experimental condition. The dependence on the cluster size nevertheless
enters through the stochastic cluster dynamics and through the empirical
mixture weights \(p_{K,s,F}\). This restriction substantially reduces the
number of free parameters and avoids fitting a separate parameter vector to
each chain length.

The parameters varied in the calibration are the characteristic OU velocity
\(v_0\), the OU noise amplitude \(\sigma\), the pair-interaction strength
\(\epsilon_{\mathrm{pair}}\), and the positional diffusion coefficient \(D\).
For each experimental condition, \(v_0\) and \(\sigma\) are scaled relative
to the empirical mean and standard deviation,
\[
v_0=c_v\widehat\mu_{s,F},
\qquad
\sigma=c_\sigma\widehat\sigma_{s,F}.
\]
The coarse candidate set contains \(11\) values of \(c_v\), \(8\) values of
\(c_\sigma\), \(11\) values of \(\epsilon_{\mathrm{pair}}\), and \(5\) values
of \(D\), with \(\kappa=1.8\,\mathrm{s}^{-1}\) fixed. The complete Cartesian
grid therefore contains
\[
11\times8\times11\times5=4840
\]
candidate parameter combinations. Rather than evaluating the complete grid,
\(100\) deterministic approximately space-filling points are selected from
this set for the initial search.

The coarse search is followed by a local refinement. The three coarse
parameter combinations with the smallest Kolmogorov--Smirnov distances are
used as starting points for bounded Nelder--Mead optimization. The refinement
is performed within the bounds
\[
0\le v_0\le1.70\,\widehat\mu_{s,F},
\qquad
0.20\,\widehat\sigma_{s,F}
\le\sigma\le
2.30\,\widehat\sigma_{s,F},
\]
\[
2\le\epsilon_{\mathrm{pair}}\le40,
\qquad
0.08\le D\le0.80,
\qquad
\kappa=1.8\,\mathrm{s}^{-1}.
\]
The four best candidates from the coarse search together with the three
locally refined candidates are then evaluated using longer MCMC simulations.
After removing duplicate parameter combinations, the parameter set with the
smallest Kolmogorov--Smirnov distance in this final reranking is retained.

The numerical settings used for the MCMC simulations, including burn-in,
thinning, proposal parameters, and chain lengths, are listed in
Table~\ref{tab:simulation_parameters} in the appendix. For each candidate
parameter vector, separate chains are simulated for the experimentally
observed cluster sizes and combined using their empirical chain-length
frequencies.

\subsection{Comparison between experimental and simulated distributions}

We now compare the fitted stochastic cluster model with the experimental
velocity distributions. Since the same experimental distributions are used to
select the fitted parameters, these comparisons are in-sample goodness-of-fit
diagnostics rather than out-of-sample validation. Their purpose is to
visualize the agreement measured by the Kolmogorov--Smirnov criterion and to
identify any remaining differences in the shapes of the distributions.

Figures~\ref{fig:velocity-overlays-size-4} and
\ref{fig:velocity-overlays-size-10} compare the cleaned experimental velocity
distributions with the fitted simulated mixtures for \(4\,\mu\mathrm m\)
particles (\({}^{4}M_{\mathrm{PEG}}\)) and \(10\,\mu\mathrm m\) particles
(\({}^{10}M_{\mathrm{PEG}}\)), respectively, for the \(\times1\) condition. 
The
corresponding comparisons for \(20\,\mu\mathrm m\) particles
(\({}^{20}M_{\mathrm{PEG}}\))  is provided in the 
supplementary material. No cluster-model calibration is performed for  \(40\,\mu\mathrm m\) particles
(\({}^{40}M_{\mathrm{PEG}}\)) because no chains were observed.
Each figure contains the four magnetic forcing conditions. The upper row shows
the experimental and fitted simulated velocity distributions, while the lower
row shows the corresponding empirical cumulative distribution functions. For
each condition, the simulated distribution is the empirical mixture over the
experimentally observed chain lengths.

\begin{figure}[!htbp]
    \centering
    \includegraphics[width=\textwidth]
    {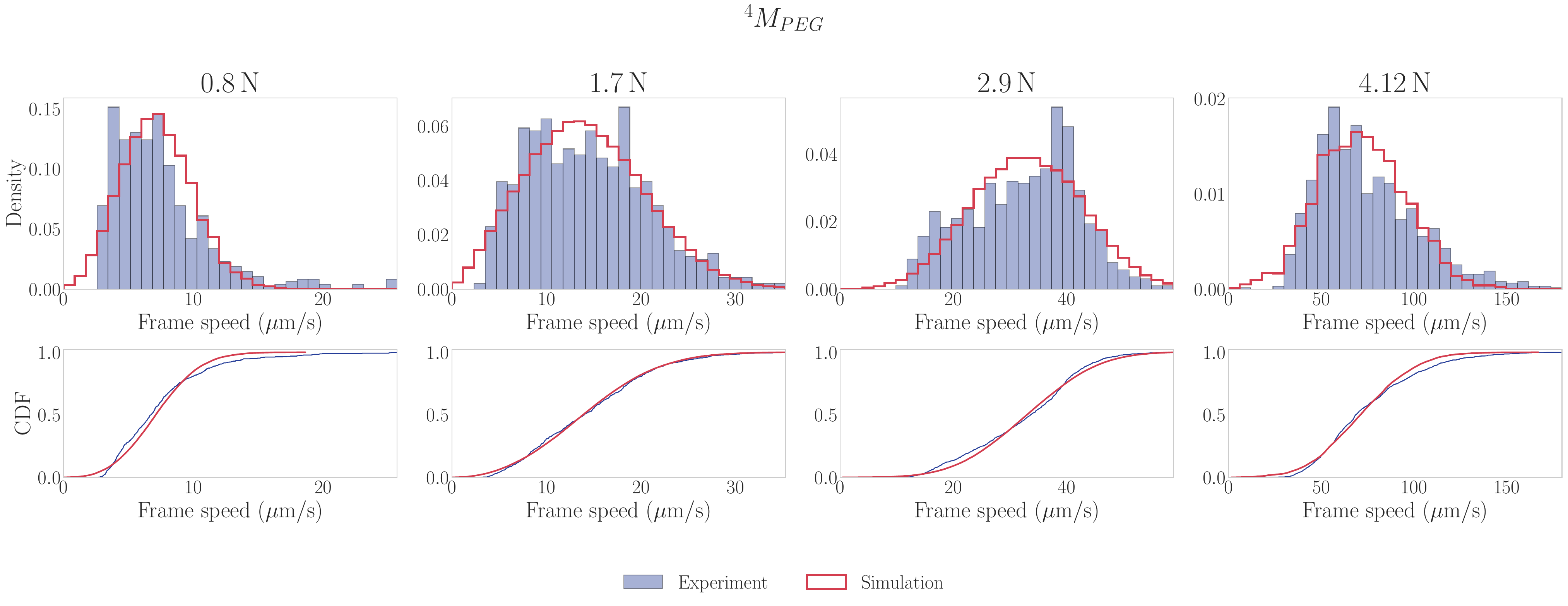}
    \caption{
    Experimental and fitted simulated velocity distributions for
    \({}^{4}M_{\mathrm{PEG}}\) particles for the \(\times1\) condition at nominal magnetic pulling force values  \(0.8\), \(1.7\), \(2.9\), and \(4.12\) N.
    }
    \label{fig:velocity-overlays-size-4}
\end{figure}

\begin{figure}[!htbp]
    \centering
    \includegraphics[width=\textwidth]
    {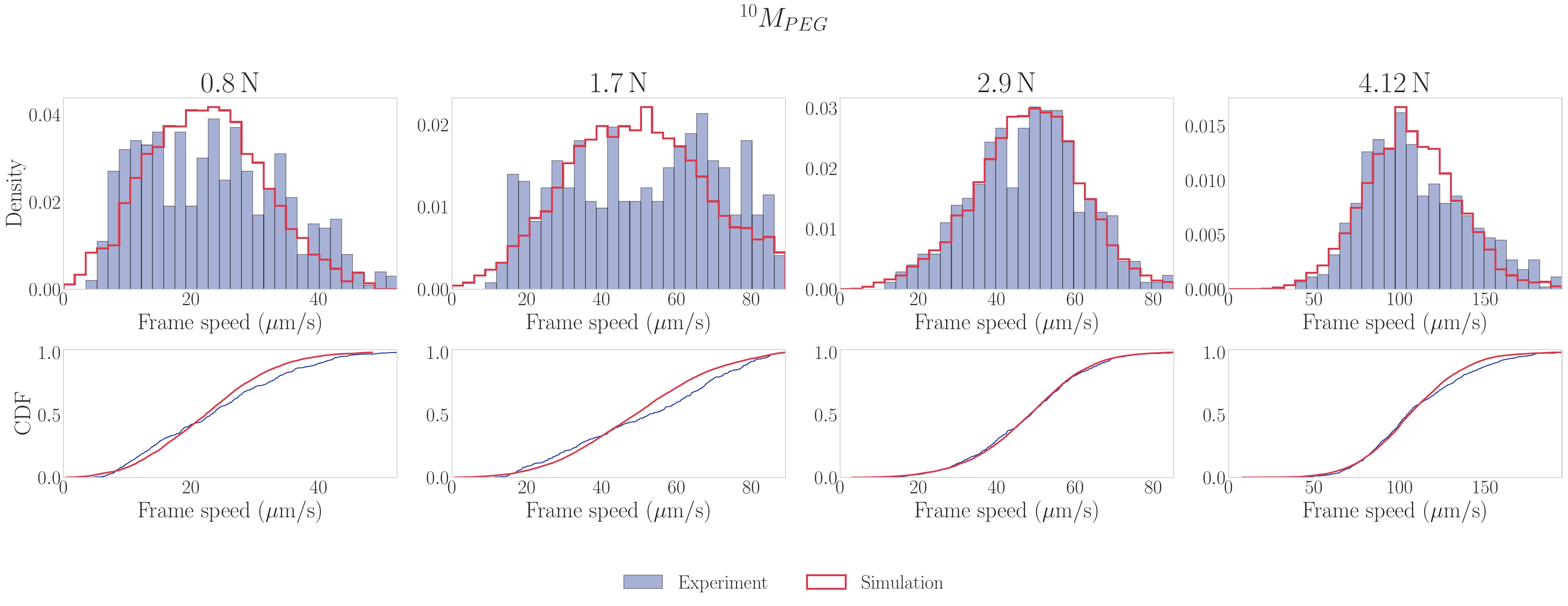}
    \caption{
    Experimental and fitted simulated velocity distributions for
    \({}^{10}M_{\mathrm{PEG}}\) particles for the \(\times1\) condition at nominal magnetic pulling force values  \(0.8\), \(1.7\), \(2.9\), and \(4.12\) N.
    }
    \label{fig:velocity-overlays-size-10}
\end{figure}

These distributional overlays provide a visual counterpart to the
Kolmogorov--Smirnov calibration criterion. The Kolmogorov--Smirnov distance
measures the largest difference between the empirical cumulative distribution
functions of the experimental and simulated samples, so smaller values
indicate closer distributional agreement. The corresponding
Kolmogorov--Smirnov distances are reported in
Table~\ref{tab:ks-x1} in the appendix.

Overall, the fitted stochastic cluster model reproduces the experimental
velocity distributions with varying degrees of agreement across the
experimental conditions. Some experimental distributions exhibit shoulders
or multimodal features that are not fully reproduced by the fitted
simulations. Although these discrepancies contribute to the
Kolmogorov--Smirnov criterion, the criterion records only the largest
difference between the two empirical cumulative distribution functions and
does not describe the location or nature of the remaining mismatch. A natural
extension is therefore to examine velocity distributions conditional on the
observed chain length. This would make it possible to distinguish
multimodality arising from the mixture over different chain lengths from
multimodality already present within a fixed chain length.

\subsection{Trends in the calibrated model parameters}
\label{ss:parameter-trends}

We next examine how the fitted characteristic velocity varies across particle
sizes and magnetic forcing values. Figure~\ref{fig:parameter-trends} shows the
fitted Ornstein--Uhlenbeck mean velocity \(v_0\) as a function of magnetic
forcing, with separate curves corresponding to the different particle sizes.
Recall that \(v_0\widehat b\) is the stationary mean velocity of the
Ornstein--Uhlenbeck process, so \(v_0\) represents the characteristic velocity
in the field direction. It arises from the force-balance interpretation
\[
m\,dV_t=(F\widehat b-\gamma V_t)\,dt+\eta\,dB_t^V,
\]
where \(F\) is the effective driving force experienced by an individual
particle along the field, \(\gamma\) is the linear drag coefficient, and
\(\eta\) is the stochastic-force amplitude, giving \(v_0=F/\gamma\). The force \(F\) should not be confused with the manufacturer-specified
nominal magnetic pull force \(F_{\mathrm{mag}}\), which is used to
label the experimental magnet conditions.

\begin{figure}[!htbp]
    \centering
    \includegraphics[width=0.65\textwidth]
    {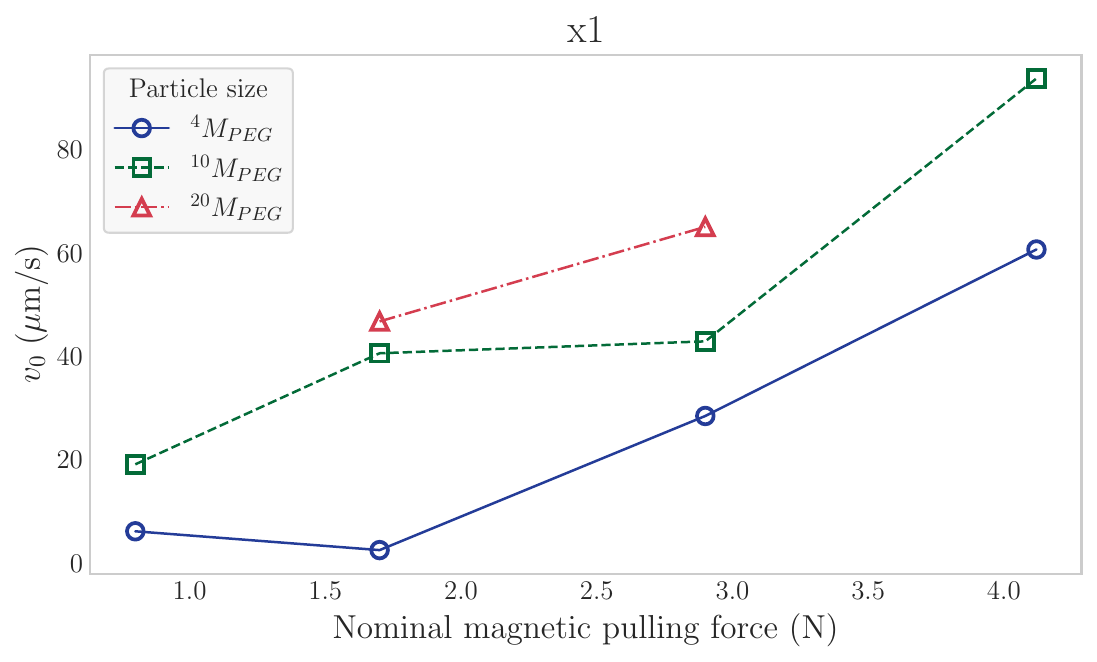}
    \caption{
    Fitted OU mean velocity \(v_0\) for the \(\times1\)
    condition as a function of 
    nominal magnetic pulling force, with separate curves
    corresponding to the different particle sizes.
    }
    \label{fig:parameter-trends}
\end{figure}

The fitted values show an overall increase in the characteristic velocity with
stronger magnetic forcing. The dependence is not strictly monotone for every
particle size, but the highest fitted velocities occur under the stronger
forcing conditions. The magnitude of the response also differs between
particle sizes, indicating that the effective velocity scale depends on both
the particle size and the applied forcing.

\subsection{Interpretation and limitations}

The experimental analysis is intended as a first test of whether the
stochastic cluster law can serve as a useful computational description of the
observed micromotor trajectories. It is not intended to identify a unique
microscopic interaction mechanism from velocity measurements alone. The
fitted quantities should therefore be interpreted as effective parameters of
the present stochastic model rather than as direct measurements of microscopic
physical constants.

The comparison incorporates several quantities obtained directly from the
experiment, including the observed chain-length composition and the applied
forcing. Within each experimental condition, the fitted parameters are shared
across the experimentally observed cluster sizes, while the contribution of
each cluster size to the final simulated distribution differs through the
stochastic cluster dynamics and the empirical mixture weights.

The fitted simulations reproduce the overall experimental velocity
distributions with varying degrees of agreement across the conditions.
Remaining discrepancies include localized differences in distributional shape,
such as shoulders or possible multimodality. Possible physical sources include
heterogeneity in the magnetic coating, intermittent particle interactions,
hydrodynamic or substrate-mediated effects, tracking uncertainty, and
variability between particles within the same nominal experimental condition.

Three limitations of the present calibration are particularly relevant.
First, several model parameters can influence the velocity distribution in
similar ways. The fitted values may therefore not be uniquely identifiable
from the velocity distribution alone. In particular, changes in the
Ornstein--Uhlenbeck velocity parameters, the positional diffusion coefficient,
and the interaction strength can produce partially overlapping effects on the
resulting speed distribution. The relaxation rate \(\kappa\) is consequently
held fixed in the present calibration, reducing the dimension of the
parameter search.

Second, the calibration is based on stochastic MCMC simulations. Consequently,
the Kolmogorov--Smirnov distance obtained for a fixed parameter vector contains
Monte Carlo variability. The numerical search mitigates this by combining a
space-filling coarse search with local refinement and a final reranking using
longer simulations, but the fitted parameter values remain subject to
simulation uncertainty. Moreover, although the Kolmogorov--Smirnov criterion
compares the complete empirical distributions, it summarizes their discrepancy
through the largest difference between their cumulative distribution
functions and does not identify the origin of a remaining mismatch.

Third, the available experimental data limit how finely the dependence on the
physical control parameters can be resolved. Measurements are available only
at a small number of magnetic forcing values, so the present analysis cannot
reliably determine a continuous response of the effective model parameters to
the forcing strength. In addition, the velocity distributions used for
calibration are pooled across chain lengths. If sufficiently many velocity
measurements were available separately for each cluster size \(K\), the
simulated distributions \(P^{\mathrm{sim}}_{K,s,F}\) could be compared
directly with their experimental counterparts rather than only through the
mixture over the observed chain lengths. Such data would provide more direct
information about chain-length effects and help reduce ambiguity in the fitted
parameters.

The present distributional overlays are also in-sample diagnostics because the
same experimental data are used for calibration and assessment. A stronger
future test of the virtual-design perspective would estimate the relation
between experimental controls and effective model parameters using only a
subset of the experimental conditions and then predict velocity distributions
at held-out particle sizes or forcing values. Such a design would test whether
the model can interpolate or extrapolate to experimental conditions that were not
used in its calibration.
\bigskip

{\bf Acknowledgements}
The authors used OpenAI's ChatGPT during the preparation of this manuscript. All mathematical statements, proofs, computations, references, and conclusions were independently checked and verified by the authors, who take full responsibility for the content of the manuscript.
This work was supported by a research grant from VILLUM FONDEN (C.H., B.S. grant number VIL69126).

\section*{Declarations}

{\bf Competing interests} The authors have no competing interests to declare that are relevant to the content of this article.

{\bf Data availability} The code used for the numerical experiments and the calibration analysis is provided as supplementary material. The experimental data analyzed in this study are available from the corresponding authors upon reasonable request.

\bibliographystyle{abbrv}
\bibliography{lit}

\providecommand{\MPEG}[1]{{}^{#1}\!M_{\mathrm{PEG}}}
\newpage

\appendix
\section{Experimental micromotor system and trajectory data}
\label{sec:supp-micromotor-experiment}
\label{app:experimental-protocol}

This section describes the fabrication, imaging, trajectory extraction, and
empirical characterization of the magnetic micromotors. Magnetically driven
micro- and nanorobots form a standard class of externally actuated particle
systems; see \cite{zhou:mayorgamartinez:pane:zhang:pumera:2021}
for a review. 
The present experiment provides planar trajectories of moving
particles under controlled changes of particle size, relative particle
concentration, and magnetic condition. In particular, the manufacturer-specified magnet values are used only to label the nominal magnetic pulling-force conditions and are not interpreted
as forces acting on individual micromotors.

\subsection{Materials}
\label{sec:supp-micromotor-materials}

Sodium chloride (NaCl, 99\%),
poly(diallyldimethylammonium chloride) (PDDA, 100-200 kDa, 20 wt.\% in
$\mathrm{H}_2\mathrm{O}$),
poly(sodium 4-styrenesulfonate) (PSS, molecular weight
$70\,\mathrm{kDa}$), and polystyrene particles (PS, diameters
$4$, $10$, $20$, and $40\,\mu\mathrm{m}$, 10 wt.\%) were purchased from
Sigma-Aldrich. Untreated $\mu$-Slides VI 0.4 were purchased from ibidi GmbH,
and NdFeB permanent magnets were purchased from Supermagnete. Ultrapure water
(resistivity $18.2\,\mathrm{M}\Omega\,\mathrm{cm}$)
was supplied by a Synergy UV system from Millipore. Magnetic nanoparticles
(MNPs) and poly(L-lysine)-g-polyethylene glycol (PLL-g-PEG) were synthesized as
previously described in
\cite{dediosandres:ramosdocampo:qian:stingaciu:stadler:2021}.

\subsection{Micromotor assembly and surface characterization}
\label{sec:supp-micromotor-assembly}

The micromotors were assembled by a layer-by-layer procedure. All polymers were
dissolved in $0.5\,\mathrm{M}$ NaCl. A volume of $2\,\mathrm{mL}$ of polymer
solution was used in every deposition step, and all washing steps were carried
out in ultrapure water. First, $100\,\mu\mathrm{L}$ of PS stock suspension was
mixed with PDDA at $2\,\mathrm{mg}\,\mathrm{mL}^{-1}$ for $15\,\mathrm{min}$.
The particles were then subjected to three washing cycles at
$6000\,\mathrm{rpm}$ for $5\,\mathrm{min}$ in an Eppendorf MiniSpin
centrifuge. The particles were incubated with PSS at 20 vol.\% for
$15\,\mathrm{min}$ and washed three times. MNPs were subsequently deposited at
$20\,\mu\mathrm{g}\,\mathrm{mL}^{-1}$ for $15\,\mathrm{min}$, followed by
three washing cycles and a second PSS deposition step. Finally, PLL-g-PEG was
deposited at $2\,\mathrm{mg}\,\mathrm{mL}^{-1}$ for $15\,\mathrm{min}$ as the
terminating layer, followed by three washing cycles.

The resulting micromotors are denoted by $\MPEG{x}$, where
$x\in\{4,10,20,40\}$ is the nominal PS-core diameter in micrometres. The
$\zeta$-potential was measured in ultrapure water after each deposition step
using a Malvern Zetasizer 4. Transmission electron microscopy (TEM) and
scanning electron microscopy (SEM) were used to inspect the particle
morphology after MNP deposition.
Figure~\ref{fig:supp-micromotor-characterisation} summarises the assembly and
the corresponding surface characterisation.

\begin{figure}[htbp]
\centering
\includegraphics[width=\textwidth]{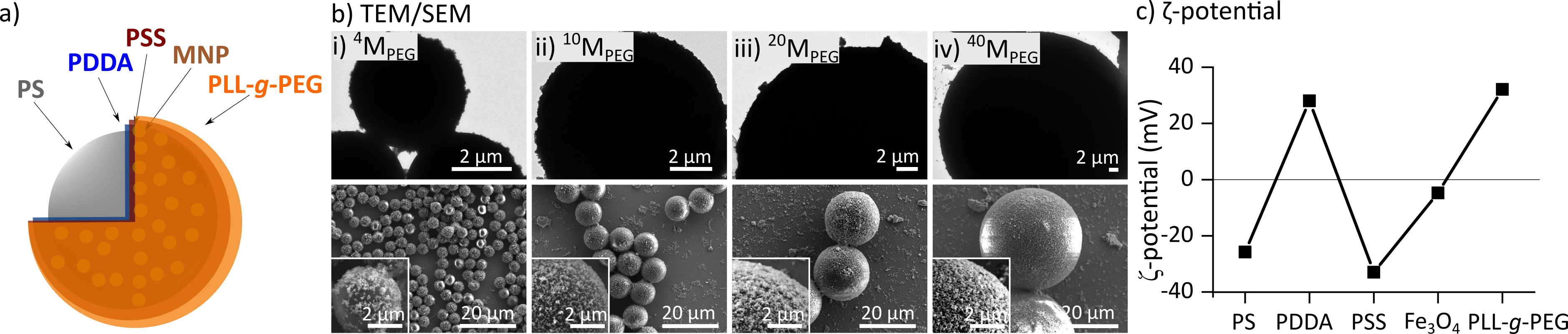}
\caption{Micromotor assembly and characterisation. (a) Schematic illustration
of the layers in the assembled $\MPEG{x}$ micromotors: a PS core, PDDA and PSS
precursor layers, MNPs, and a PLL-g-PEG terminating layer. (b) Representative
TEM images (top) and SEM images (bottom) of $\MPEG{4}$, $\MPEG{10}$,
$\MPEG{20}$, and $\MPEG{40}$. Scale bars: $2\,\mu\mathrm{m}$ (top) and  $20\,\mu\mathrm{m}$ (bottom). (c)
$\zeta$-potential after the successive deposition steps, showing the expected
charge alternation during layer-by-layer assembly.}
\label{fig:supp-micromotor-characterisation}
\end{figure}

\subsection{Locomotion experiments}
\label{sec:supp-micromotor-locomotion}

For each experiment, $10\,\mu\mathrm{L}$ of micromotor suspension was dispersed
in $90\,\mu\mathrm{L}$ of ultrapure water and transferred to an untreated
$\mu$-Slide VI 0.4. Two relative particle-concentration conditions were used:
the $\times1$ condition and the $\times10$ condition, prepared with
approximately ten times the particle amount. Absolute final concentrations
cannot be reported reliably because the particle concentration changes
substantially during micromotor assembly. The labels $\times1$ and $\times10$
should therefore be understood as operational experimental conditions rather
than absolute concentration measurements.

The particles were allowed to settle for $2\,\mathrm{min}$ in the
microfluidic channel. A permanent magnet was then placed in the middle of the
channel. Four magnets were used, with manufacturer-specified nominal pull
forces of $0.8$, $1.7$, $2.9$, and $4.12\,\mathrm{N}$. These values describe
the maximum force with which the magnet can hold a magnetic object under the
manufacturer's test conditions. They do not quantify the force exerted on an
individual micromotor in the microfluidic channel. In the figures and
discussion below, the nominal pull force is denoted by $F_{\mathrm{mag}}$ and
serves only as a label for the four magnet conditions. Control movies in the
absence of a magnet were also recorded; the comparisons reported here concern
the four magnet conditions.

For every combination of particle size, relative concentration, and magnet,
three videos were recorded. Each video contained 300 frames acquired at
$16.67$ frames per second with a $40\times$ objective, corresponding to an
observation time of approximately $18\,\mathrm{s}$. Particle trajectories were
extracted with the TrackMate plugin
\cite{ershov:phan:pylvanainen:etal:2022} in Fiji
\cite{schindelin:argandacarreras:etal:2012}. Approximately 150 particles were
tracked per video, corresponding to approximately 450 tracked particle
trajectories for each experimental condition.

Velocity data were pooled over the three videos recorded under the same
experimental condition.
{Unless explicitly stated otherwise, reported velocity values
are mean velocities.}
In the boxplots, the box spans the first to the third quartile, the horizontal
line marks the median, and the superimposed symbol marks the mean; observations
identified as outliers are displayed individually. Particle chains were
identified and counted manually from the videos. The chain-size distributions
were likewise pooled over the three videos for each condition. Since the chain
counts depend strongly on the experimental condition, the number of chains
represented in the histograms varies between panels.

\subsection{The $\times1$ condition}
\label{sec:supp-micromotor-low}

Figure~\ref{fig:supp-micromotor-low} shows representative brightfield frames,
velocity boxplots, and pooled chain-size distributions under the $\times1$
condition. The four micromotor sizes displayed distinct patterns of chain
formation, whereas the mean velocity generally increased as the nominal
magnet pull force increased.

For $\MPEG{4}$, particle chains were visible under all four magnet conditions.
The mean velocity increased from approximately
$10\,\mu\mathrm{m}\,\mathrm{s}^{-1}$ at $0.8\,\mathrm{N}$ to approximately
$100\,\mu\mathrm{m}\,\mathrm{s}^{-1}$ at $4.12\,\mathrm{N}$. The mean and
median were similar in the displayed boxplots, although individual
high-velocity observations occurred. Chains containing two or three particles
were most frequent. Chains of up to six particles were observed at the largest
nominal pull force.
The $\MPEG{10}$ micromotors showed a comparable increase in velocity. At
$0.8\,\mathrm{N}$, the observed chains were dimers. At the larger nominal pull
forces, dimers remained the most common configuration, while chains containing
three to five particles also occurred.
For $\MPEG{20}$, chain formation was much less frequent. No chains were
observed in the pooled counts at $0.8$ or $4.12\,\mathrm{N}$. At $1.7$ and
$2.9\,\mathrm{N}$, the observed chains were predominantly dimers, with a small
number of trimers. The total number of observed chains was substantially lower
than for $\MPEG{4}$ and $\MPEG{10}$. The mean velocity increased with nominal
pull force and was of the same order as for $\MPEG{10}$.
For $\MPEG{40}$, no chain formation was detected under any of the four magnet
conditions. Nevertheless, the micromotors moved faster as the nominal pull
force increased, with mean velocities rising from approximately
$80\,\mu\mathrm{m}\,\mathrm{s}^{-1}$ to approximately
$200\,\mu\mathrm{m}\,\mathrm{s}^{-1}$ across the displayed conditions.

Taken together, chain formation in the $\times1$ condition decreased markedly
with particle size. Robust chain formation was observed for $\MPEG{4}$ and
$\MPEG{10}$, whereas $\MPEG{20}$ formed only a limited number of predominantly
short chains and no chains were observed for $\MPEG{40}$. Across all particle
sizes, stronger nominal magnet conditions were associated with higher
velocities. By contrast, no clear monotone relation between nominal pull force
and chain length was apparent in the pooled $\times1$ data.

A possible explanation for the reduction in chain formation
with increasing particle size is the lower particle number density associated
with larger particles when the stock suspensions are specified at the same
weight fraction. This would reduce the frequency of particle encounters and
hence the opportunity for chain formation. The observation that $\MPEG{40}$
forms chains in the $\times10$ condition, whereas no chains are observed in the
$\times1$ condition, is consistent with this interpretation. Since the
absolute particle concentrations after assembly are not known, this should be
regarded only as a possible explanation. The reduction in chain formation may also be related to the individual
magnetic moment of motors of different sizes. For smaller motors, the relative
amount of deposited magnetic nanoparticles per particle core is much higher
than for larger motors. This means that the total magnetic moment of a smaller
motor is larger than that of a larger motor, which may make it easier to
establish dipolar interactions, i.e., magnetic particle--particle interactions,
between neighboring motors.

\begin{figure}[!htpb]
\centering
\includegraphics[width=\textwidth,height=.88\textheight,keepaspectratio]{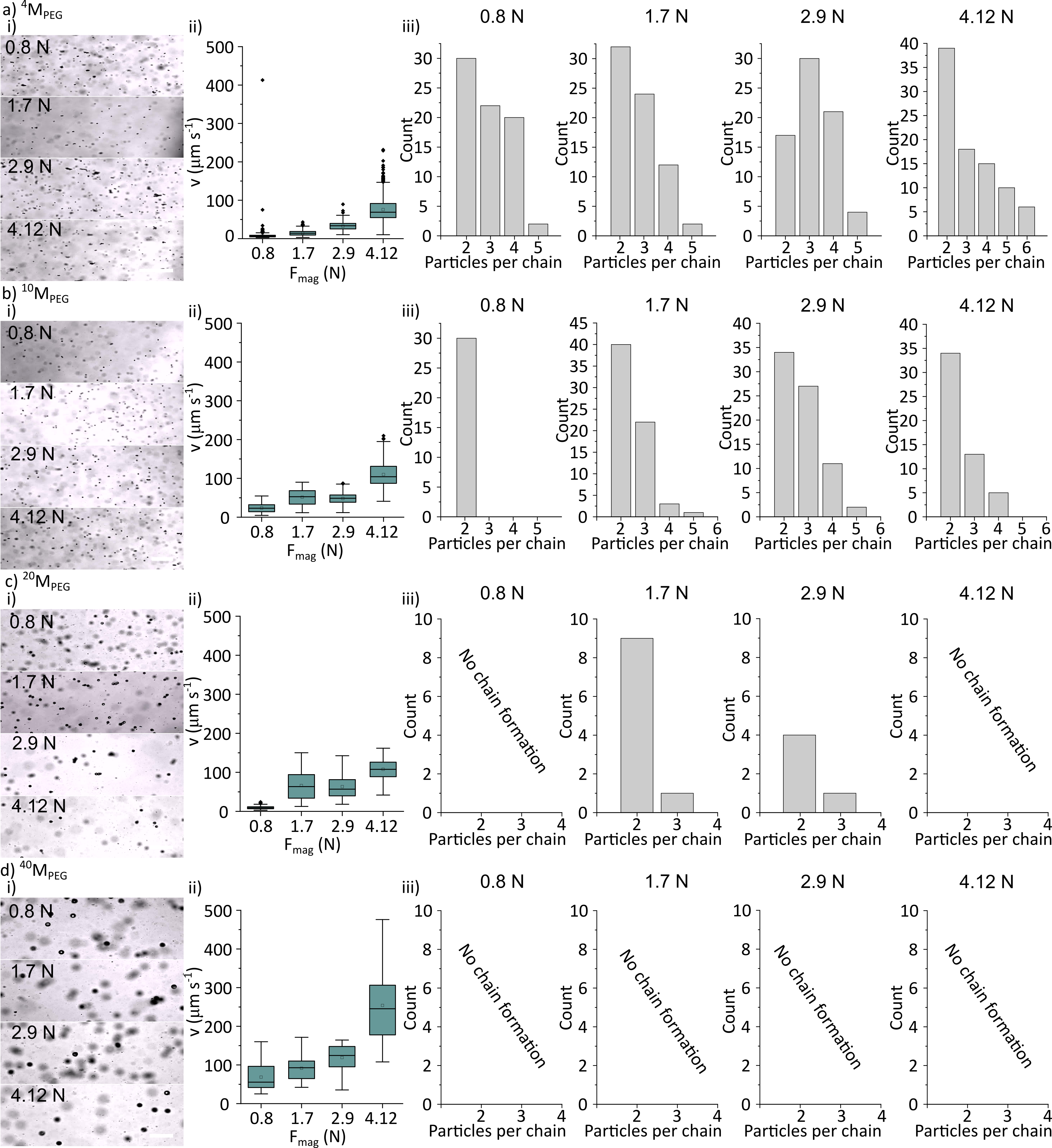}
\caption{Micromotor mobility under the $\times1$ condition. For $\MPEG{4}$ (a),
$\MPEG{10}$ (b), $\MPEG{20}$ (c), and $\MPEG{40}$ (d), the panels show
representative brightfield images (i), pooled velocity boxplots (ii), and
pooled distributions of the number of particles per chain (iii) under magnets
with manufacturer-specified nominal pull forces of $0.8$, $1.7$, $2.9$, and
$4.12\,\mathrm{N}$. The nominal pull-force values are labels for the magnet
conditions and are not forces acting directly on individual particles. Scale
bars: $200\,\mu\mathrm{m}$.}
\label{fig:supp-micromotor-low}
\end{figure}

\subsection{Additional results for the \(\times1\) condition}
\label{app:x1-results}

Figure~\ref{fig:x1-overlays-20um-app} shows the distributional
comparisons for the \(20\,\mu\mathrm m\) particles, complementing the
corresponding results for the \(4\,\mu\mathrm m\) and
\(10\,\mu\mathrm m\) particles shown in Section~\ref{sec:real-data}. As in
the main text, the simulated distribution for each experimental condition is
the empirical mixture over the experimentally observed chain lengths, with
mixture weights determined by the observed chain-length frequencies. The
upper row shows the experimental and fitted simulated velocity distributions,
while the lower row shows the corresponding empirical cumulative distribution
functions.

\begin{figure}[!htbp]
    \centering
    \includegraphics[width=.51\textwidth]
    {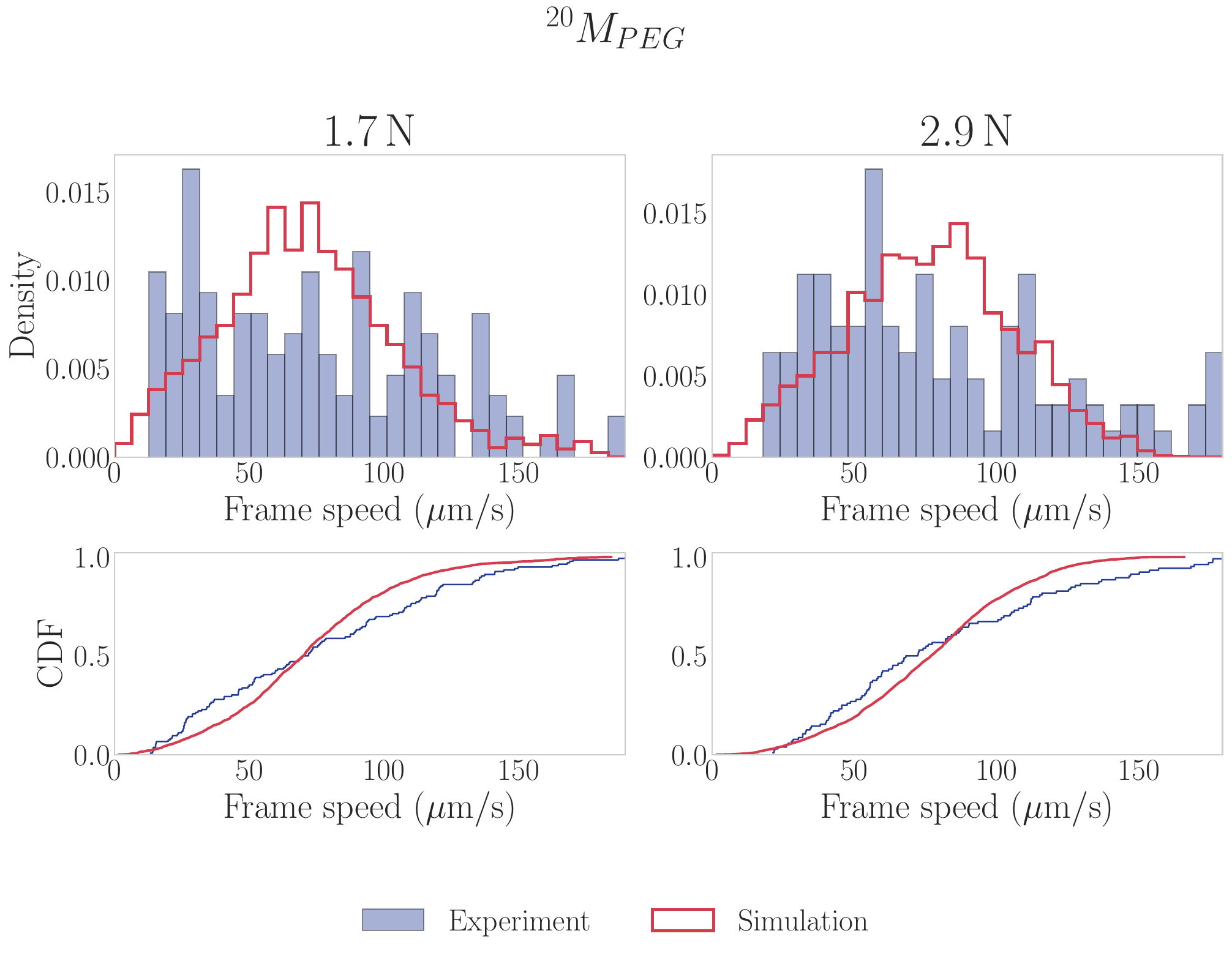}
    \caption{
    Experimental and fitted simulated velocity distributions for the
    \(\times1\) condition for \({}^{20}M_{\mathrm{PEG}}\) particles at nominal
magnetic pulling force values \(1.7\) N and \(2.9\) N.
    }
    \label{fig:x1-overlays-20um-app}
\end{figure}

\subsection{The $\times10$ condition}
\label{sec:supp-micromotor-high}

In the $\times10$ condition, chains were observed for all particle sizes; see
Figure~\ref{fig:supp-micromotor-high}. The effect of particle size remained
pronounced: $\MPEG{4}$ formed the longest chains, whereas $\MPEG{40}$ formed
almost exclusively dimers. Mean velocities again tended to increase with
nominal magnet pull force.

For $\MPEG{4}$, chains were visible under all four magnet conditions. The mean
velocity increased from  about
$20\,\mu\mathrm{m}\,\mathrm{s}^{-1}$ at $0.8\,\mathrm{N}$ to  about
$100\,\mu\mathrm{m}\,\mathrm{s}^{-1}$ at $4.12\,\mathrm{N}$. The pooled
chain-size distributions were broad. Chains containing up to 25 particles were
observed, with many chain sizes between two and fifteen particles. At
$4.12\,\mathrm{N}$, a substantial part of the distribution was concentrated
between approximately seven and twelve particles.
The $\MPEG{10}$ micromotors also formed chains under every magnet condition.
Their mean velocity increased with nominal pull force and reached approximately
$200\,\mu\mathrm{m}\,\mathrm{s}^{-1}$ at $4.12\,\mathrm{N}$. Their chains
were shorter than those formed by $\MPEG{4}$: most observed chains contained
between two and six particles, although a small number of longer chains
occurred under some conditions.
The $\MPEG{20}$ micromotors formed chains under all four magnet conditions, but
the chain-size distributions were strongly concentrated on dimers. Trimers and
occasional longer chains were also observed. The mean velocities increased
with nominal pull force and were comparable in scale to those of $\MPEG{10}$.
In the $\times10$ condition, the $\MPEG{40}$ micromotors also formed chains,
unlike in the $\times1$ condition. These chains were almost exclusively
dimers, with a smaller number of trimers. The mean velocity increased from
approximately $100\,\mu\mathrm{m}\,\mathrm{s}^{-1}$ at $0.8\,\mathrm{N}$ to
approximately $400\,\mu\mathrm{m}\,\mathrm{s}^{-1}$ at $4.12\,\mathrm{N}$.

Thus, in the $\times10$ condition, all four particle sizes exhibited chain
formation, but chain length decreased markedly with particle size.
$\MPEG{4}$ produced broad chain-size distributions and occasional very long
chains, whereas $\MPEG{40}$ predominantly formed dimers. Relative to the
$\times1$ condition, the displayed data show more frequent and longer chains,
particularly for the larger particle sizes. This is likely related to the fact that more particles mean more possible
interactions between neighbouring particles. The velocity distributions also
shifted towards larger values for several particle sizes.

\begin{figure}[!htpb]
\centering
\includegraphics[width=\textwidth,height=.88\textheight,keepaspectratio]{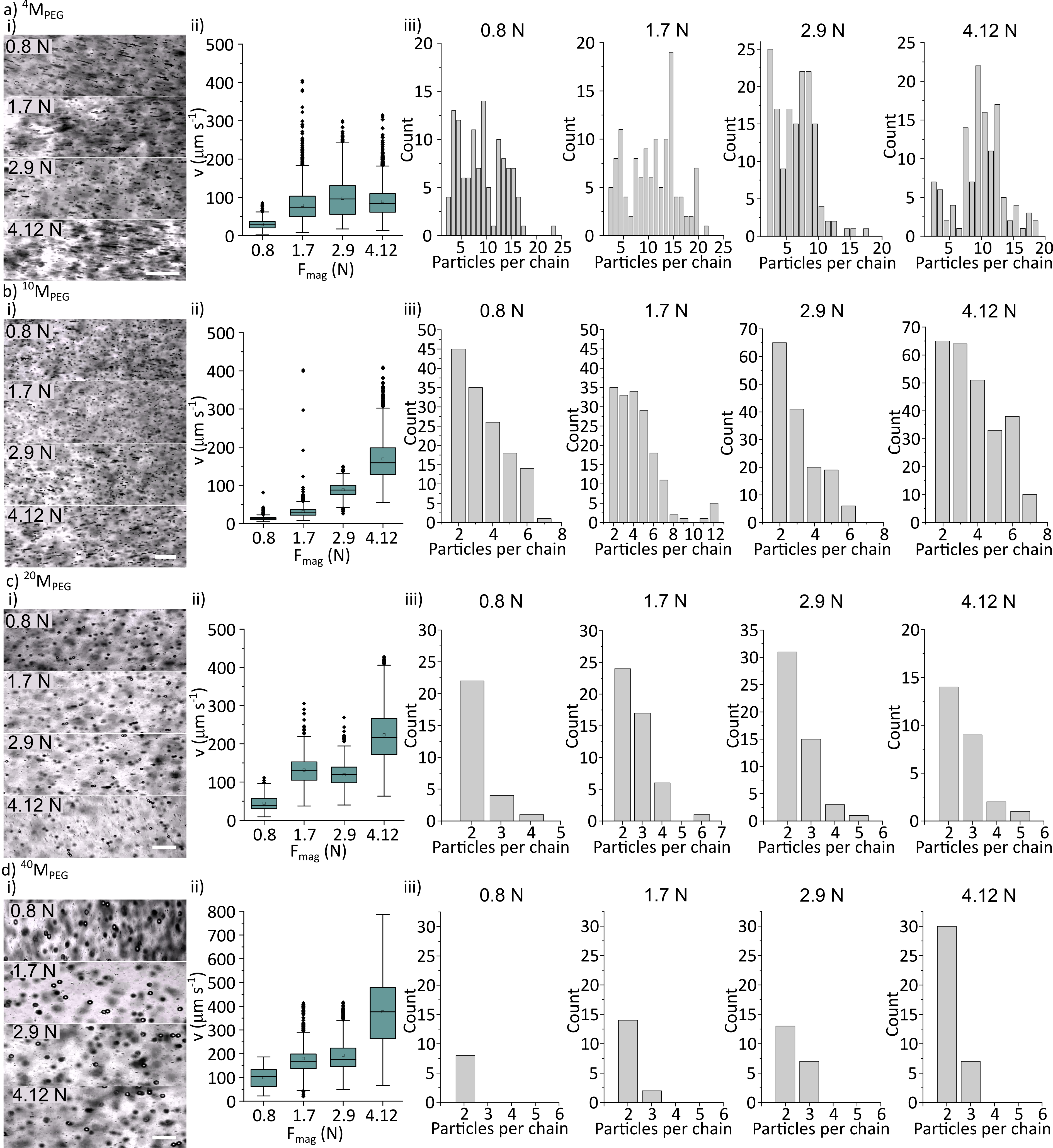}
\caption{Micromotor mobility under the $\times10$ condition. For $\MPEG{4}$
(a), $\MPEG{10}$ (b), $\MPEG{20}$ (c), and $\MPEG{40}$ (d), the panels show
representative brightfield images (i), pooled velocity boxplots (ii), and
pooled distributions of the number of particles per chain (iii) under magnets
with manufacturer-specified nominal pull forces of $0.8$, $1.7$, $2.9$, and
$4.12\,\mathrm{N}$. The nominal pull-force values are labels for the magnet
conditions and are not forces acting directly on individual particles. Scale
bars: $200\,\mu\mathrm{m}$.}
\label{fig:supp-micromotor-high}
\end{figure}

Figure~\ref{fig:10parameter-trends} shows the fitted characteristic velocity
\(v_0\) for the \(\times10\) condition across the different magnetic conditions
and particle sizes.

\begin{figure}[!htbp]
    \centering
    \includegraphics[width=0.65\textwidth]
    {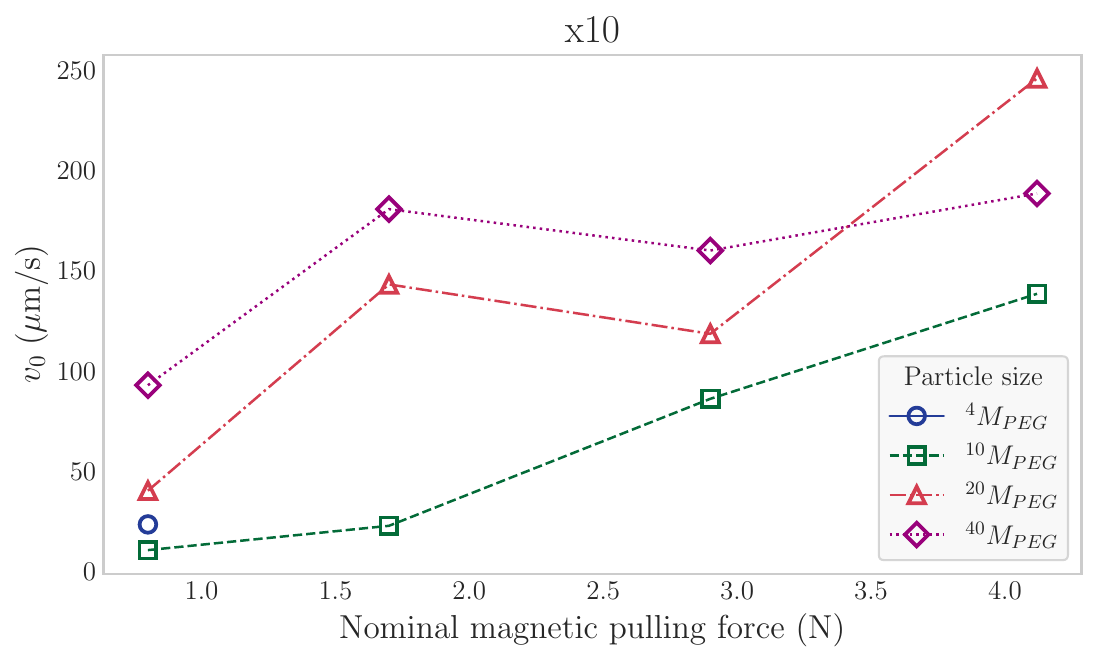}
    \caption{
    Fitted OU mean velocity \(v_0\) for the \(\times10\)
    condition as a function of    nominal magnetic pulling force, with separate curves
    corresponding to the different particle sizes.
    }
    \label{fig:10parameter-trends}
\end{figure}

\subsection{Additional results for the ×10 condition}
\label{app:x10-results}

We next report the corresponding analysis for the higher-concentration,
or \(\times10\), measurements. The same general procedure as in
Section~\ref{sec:real-data} was applied: the experimental velocity data were
processed condition by condition, the observed chain-length frequencies were
used to construct the simulated mixtures, and the stochastic-cluster model
was calibrated by minimizing the Kolmogorov--Smirnov distance between the
experimental and simulated velocity distributions. The calibration was
performed separately for each particle size and magnetic forcing value.
Accordingly, the fitted quantities should again be interpreted as
condition-specific effective parameters rather than as a single universal
microscopic parameter set.

Figures~\ref{fig:x10-overlays-size-4}--\ref{fig:x10-overlays-size-40} compare
the cleaned experimental velocity distributions with the fitted simulated
mixtures for all four particle sizes. For each condition, the simulated
distribution is the empirical mixture over the experimentally observed chain
lengths, with mixture weights determined by the observed chain-length
frequencies. The upper row shows the experimental and fitted simulated
velocity distributions, while the lower row shows the corresponding empirical
cumulative distribution functions.

\begin{figure}[!htbp]
    \centering
    \includegraphics[width=0.26\textwidth]
    {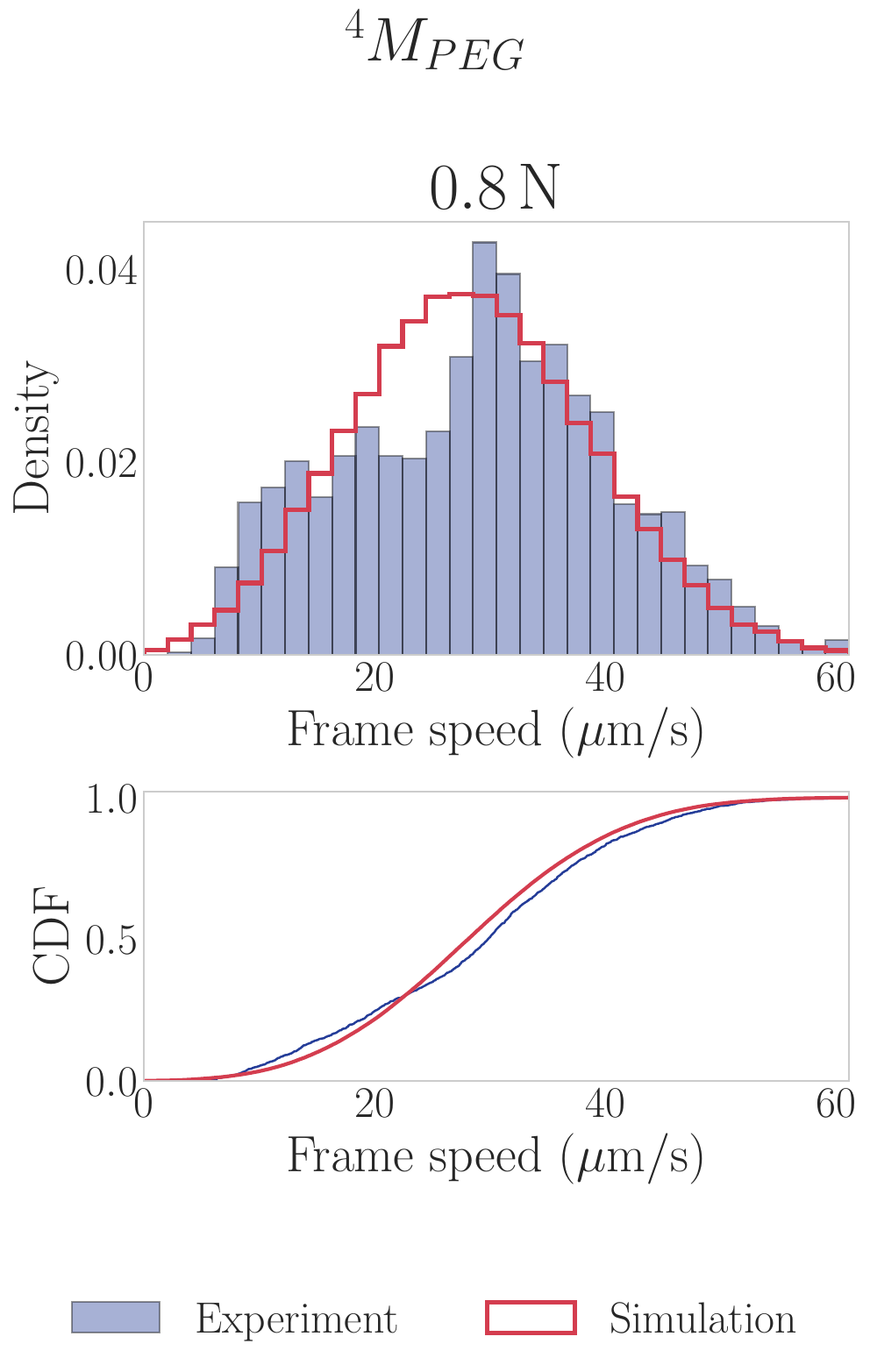}
    \caption{
    Experimental and fitted simulated velocity distributions for the
    \(\times10\) condition for \({}^{4}M_{\mathrm{PEG}}\) particles at nominal
magnetic pulling force values \(0.8\), \(1.7\), \(2.9\), and \(4.12\) N.
    }
    \label{fig:x10-overlays-size-4}
\end{figure}

\begin{figure}[!htbp]
    \centering
    \includegraphics[width=\textwidth]
    {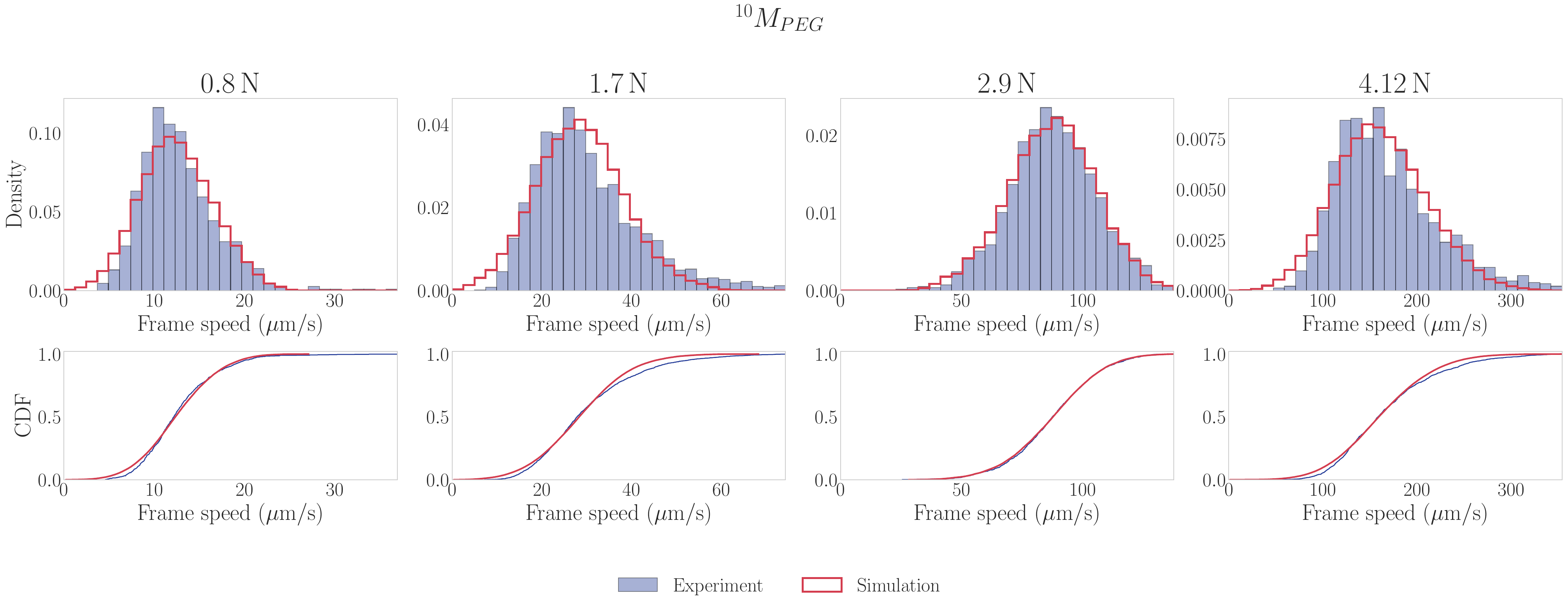}
    \caption{
    Experimental and fitted simulated velocity distributions for the
    \(\times10\) condition for \({}^{10}M_{\mathrm{PEG}}\) particles at nominal
magnetic pulling force values \(0.8\), \(1.7\), \(2.9\), and \(4.12\) N.
    }
    \label{fig:x10-overlays-size-10}
\end{figure}

\begin{figure}[!htbp]
    \centering
    \includegraphics[width=\textwidth]
    {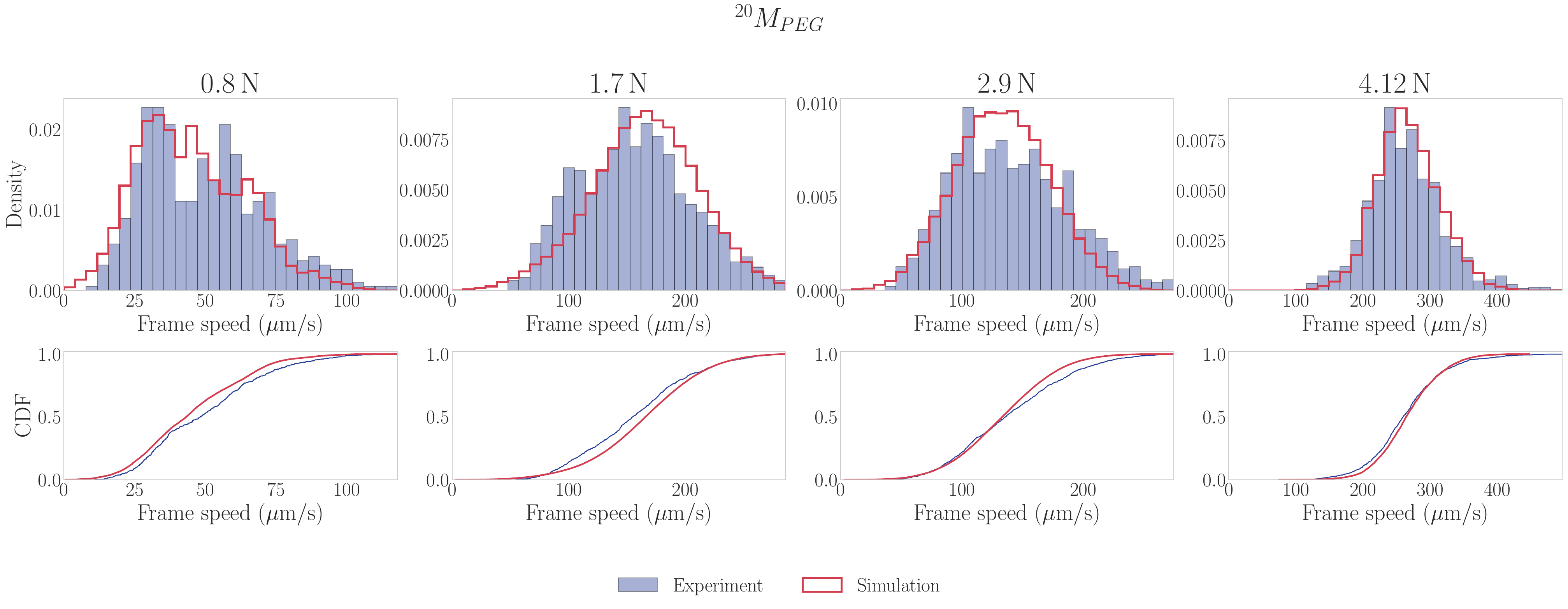}
    \caption{
    Experimental and fitted simulated velocity distributions for the
    \(\times10\) condition for \({}^{20}M_{\mathrm{PEG}}\) particles at nominal
magnetic pulling force values \(0.8\), \(1.7\), \(2.9\), and \(4.12\) N.
    }
    \label{fig:x10-overlays-size-20}
\end{figure}

\begin{figure}[!htbp]
    \centering
    \includegraphics[width=\textwidth]
    {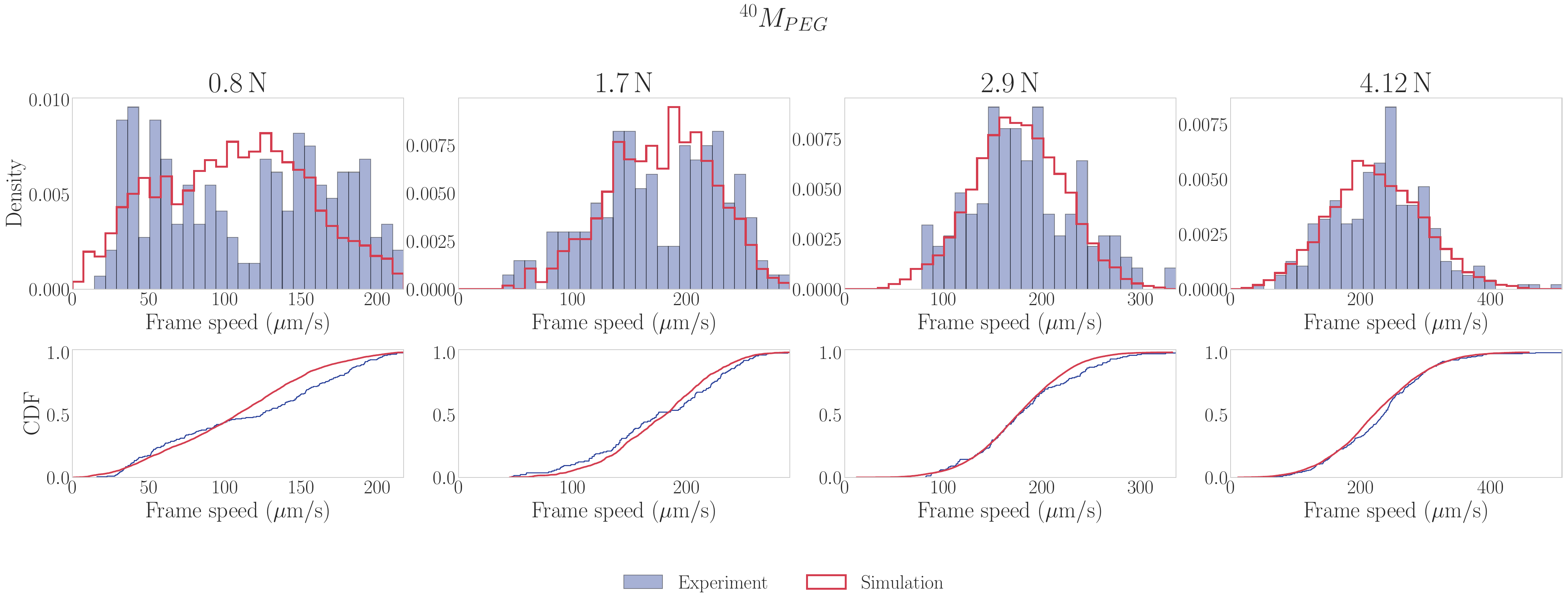}
    \caption{
    Experimental and fitted simulated velocity distributions for the
    \(\times10\) condition for \({}^{40}M_{\mathrm{PEG}}\) particles at nominal
magnetic pulling force values \(0.8\), \(1.7\), \(2.9\), and \(4.12\) N.
    }
    \label{fig:x10-overlays-size-40}
\end{figure}

\FloatBarrier

\section{Additional numerical details for the synthetic examples}
\label{app:synthetic-numerics}

This section records the numerical parameters used for the synthetic
experiments of Section~\ref{sec:ex}. These values specify the illustrative
Metropolis--Hastings simulations and are held fixed within those experiments.
They are distinct from the condition-specific calibration parameters used for
the experimental analysis in Section~\ref{sec:real-data}.

\begin{table}[htb!]
\caption{Fixed parameters used in the synthetic numerical experiments of
Section~\ref{sec:ex}.}
\label{tab:simulation_parameters}
\centering
\small
\begin{tabularx}{\textwidth}{@{} l c X @{}}
\toprule
Parameter & Value & Meaning \\
\midrule
\(k\) (code: \texttt{K}) & \(3\) & Number of particles in each cluster. \\
\(N_t\) & \(250\) & Number of temporal discretization points. \\
\(\Delta t\) & \(1/16.67\) & Time step between consecutive points. \\
\(\kappa\) (code: \texttt{kappa\_flow}) & \(5\) &
Mean-reversion rate of the Ornstein--Uhlenbeck velocity. \\
\(\sigma\) (code: \texttt{sigma\_flow}) & \(1\) &
Noise amplitude of the Ornstein--Uhlenbeck velocity. \\
\(\phi_{\mathrm{flow}}\) & \(0\) &
Direction of the mean Ornstein--Uhlenbeck velocity. \\
\(\beta\) & \(20\) & Inverse-temperature parameter. \\
\(h\) & \(2.4\) & Field-energy coefficient. \\
\(\epsilon_{\mathrm{pair}}\) & \(15\) & Pair-interaction coefficient. \\
\(\widehat b\) & \((1,0)\) & Direction of the external field. \\
\(L\) & \(16\) & Maximum separation in the cluster condition. \\
\(r_{\max}\) & \(6\) & Radial cutoff for the pair interaction. \\
\(\theta_0\) & \(\pi/6\) & Half-angle of the directional interaction cone. \\
\(\sigma_x\) & \(1.2\) & Standard deviation of the offset proposal. \\
\(\ell^X_{\min},\ell^X_{\max}\) & \(3,20\) &
Minimum and maximum block lengths for local Brownian pCN proposals. \\
\(\ell^V_{\min},\ell^V_{\max}\) & \(8,40\) &
Minimum and maximum block lengths for local Ornstein--Uhlenbeck pCN proposals. \\
\(\rho_{\mathrm{pCN}}\) & \(0.985\) &
Correlation parameter of the pCN proposals. \\
\(N_{\mathrm{steps}}\) & \(1.5\times10^5\) &
Total Metropolis--Hastings iterations. \\
\(N_{\mathrm{burn}}\) & \(6.5\times10^4\) &
Iterations discarded as burn-in. \\
\(q\) & \(10\) & Thinning interval between retained samples. \\
\bottomrule
\end{tabularx}
\end{table}

Table~\ref{tab:ks-x1}  reports the Kolmogorov--Smirnov
distances for the fitted velocity distributions under the $\times 1$ and
$\times 10$ conditions, respectively. The KS distances are generally small,
indicating that the fitted model captures the experimental velocity
distributions reasonably well. The largest discrepancies are observed for
the $20M_{\mathrm{PEG}}$  particles under the $\times 1$ condition and the $40M_{\mathrm{PEG}}$  particles
under the $\times 10$ condition.

\begin{table}[ht]
    \centering
    \caption{KS distances for the fitted velocity distributions
    under the $\times 1$ condition (left) and under the $\times 10$ condition (right).}
    \label{tab:ks-x1}
    \begin{tabular}{c|cccc}
        \hline
        & \multicolumn{4}{c}{Nominal magnetic pull force (N)} \\
        $M_{\mathrm{PEG}}$ 
        & 0.8 & 1.7 & 2.9 & 4.12 \\
        \hline
        $4M_{\mathrm{PEG}}$  
        & 0.081 & 0.051 & 0.059 & 0.055 \\
        $10M_{\mathrm{PEG}}$ 
        & 0.128 & 0.101 & 0.033 & 0.049 \\
        $20M_{\mathrm{PEG}}$ 
        & --    & 0.184 & 0.153 & --    \\
        \hline
    \end{tabular}\qquad
        \begin{tabular}{c|cccc}
        \hline
        & \multicolumn{4}{c}{Nominal magnetic pull force (N)} \\
        $M_{\mathrm{PEG}}$ 
        & 0.8 & 1.7 & 2.9 & 4.12 \\
        \hline
        $4M_{\mathrm{PEG}}$  
        & 0.052 & --    & --    & --    \\
        $10M_{\mathrm{PEG}}$ 
        & 0.044 & 0.052 & 0.025 & 0.052 \\
        $20M_{\mathrm{PEG}}$ 
        & 0.101 & 0.046 & 0.071 & 0.039 \\
        $40M_{\mathrm{PEG}}$ 
        & 0.157 & 0.140 & 0.058 & 0.063 \\
        \hline
    \end{tabular}
\end{table}



\end{document}